\newtheorem{theorem}{Theorem}[section]
\newtheorem{definition}[theorem]{Definition}
\newtheorem{lemma}[theorem]{Lemma}
\newtheorem{proposition}[theorem]{Proposition}
\newtheorem{corollary}[theorem]{Corollary}
\newtheorem{remark}[theorem]{Remark}
\newtheorem{examplecore}[theorem]{Example}}
\newenvironment{example}{\begin{examplecore}}{\hspace*{\fill}
$\square$\par\vspace{.1cm}\end{examplecore}}
\newcommand{\op}{\operatorname}
\newcommand{\et}{\mathrm{\acute{e}t}}
\begin{document}

%\title{On splitting hyperbolic planes off quadratic forms}  
\title[Motivic obstructions to Witt cancellation]{On motivic obstructions to Witt cancellation for quadratic forms over schemes}

\author{Matthias Wendt}

\date{October 2018}

\address{Matthias Wendt, Universit\"at Osnabr\"uck, Institut f\"ur Mathematik, Albrechtstra\ss{}e 28a, 49076 Osnabr\"uck, Germany} 
\email{m.wendt.c@gmail.com}

\subjclass[2010]{14F42, 11E12 (11E81, 19G05)}
\keywords{quadratic forms, stably hyperbolic, $\mathbb{A}^1$-homotopy, obstruction theory, Euler classes}

\begin{abstract}
The paper provides computations of the first non-vanishing $\mathbb{A}^1$-homotopy sheaves of the orthogonal Stiefel varieties which are relevant for the unstable isometry classification of quadratic forms  over smooth affine scheme over perfect fields of characteristic $\neq 2$. Together with the $\mathbb{A}^1$-representability for quadratic forms, this provides the first obstructions for rationally trivial quadratic forms to split off a hyperbolic plane. For even-rank quadratic forms, this first obstruction is a refinement of the Euler class of Edidin and Graham. A couple of consequences are discussed, such as improved splitting results over algebraically closed base fields as well as examples where the obstructions are nontrivial.  
\end{abstract}

\maketitle
\setcounter{tocdepth}{1}
\tableofcontents

\section{Introduction}

The present paper continues the investigation of analogues of Witt cancellation for stably trivial quadratic forms over smooth affine schemes which was started in \cite{hyperbolic-dim3}, originally motivated by the MathOverflow question 166249 ``Metabolic vs stably metabolic'' by K.J. Moi. In the previous paper, some low-dimensional results and examples were discussed; this time, the focus is on higher-dimensional schemes. The question which will be investigated in the paper is if a stably hyperbolic or stably trivial quadratic form splits off a hyperbolic plane. Over a field, stably hyperbolic implies hyperbolic by Witt cancellation. Over schemes, this is no longer true: stably trivial forms don't necessarily split off hyperbolic planes and it is interesting to know what kind of invariants can detect this failure. For generically trivial forms over smooth affine schemes over perfect fields of characteristic $\neq 2$, such questions can, via the  $\mathbb{A}^1$-representability theorems of \cite{gbundles2}, be translated into questions of $\mathbb{A}^1$-obstruction theory. Eventually, the above question is a question about reduction of structure group along the stabilization morphism $\op{SO}(n)\to\op{SO}(n+2)$. Hence the relevant spaces controlling the splitting questions are the orthogonal Stiefel varieties $\op{V}_{2,n+2}:=\op{SO}(n+2)/\op{SO}(n)$. The goal of the present paper is then to compute some $\mathbb{A}^1$-homotopy sheaves of these varieties, investigate the corresponding obstruction groups and deduce some results on splitting of quadratic forms.

\subsection{Splitting results}
First, let me describe more precisely the results on $\mathbb{A}^1$-homotopy sheaves of orthogonal Stiefel varieties and the consequences for the splitting questions. A first easy result one can prove is that the orthogonal Stiefel varieties $\op{V}_{2,2n}$ and $\op{V}_{2,2n+1}$ are $\mathbb{A}^1$-$(n-2)$-connected, as one would expect from real realization. A direct consequence of this is the following general splitting theorem. This is a quadratic-form version of a similar result of Serre on projective modules; for stably hyperbolic forms, it was proved by Roy \cite{roy} in greater generality. The $\mathbb{A}^1$-topology approach allows to provide a proof very close to classical algebraic-topological splitting results, cf.~Theorem~\ref{thm:stablesplitting}.

\begin{theorem}
\label{thm:roy}
Let $F$ be a perfect field of characteristic unequal to $2$, let $X=\op{Spec}A$ be a smooth affine scheme of dimension $d$ over $F$ and let $(\mathscr{P},\phi)$ be a generically trivial quadratic form over $A$ of rank $2n$ or $2n+1$. If $d\leq n-1$, then the form $(\mathscr{P},\phi)$ splits off a hyperbolic plane.
\end{theorem}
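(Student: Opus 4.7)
The plan is to translate ``splitting off a hyperbolic plane'' into an $\mathbb{A}^1$-homotopy lifting problem, and then invoke the $\mathbb{A}^1$-connectivity of the orthogonal Stiefel varieties together with the Nisnevich cohomological dimension of a smooth affine scheme.

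First, by the $\mathbb{A}^1$-representability theorems of \cite{gbundles2}, the generically trivial quadratic form $(\mathscr{P},\phi)$ of rank $r$ (with $r=2n$ or $r=2n+1$) is classified by an $\mathbb{A}^1$-homotopy class of maps $f\colon X\to B\op{SO}(r)$. An isometric splitting $(\mathscr{P},\phi)\cong H\perp(\mathscr{P}',\phi')$ is then equivalent to reducing the structure group along the stabilization morphism $B\op{SO}(r-2)\to B\op{SO}(r)$, whose $\mathbb{A}^1$-homotopy fiber is precisely the orthogonal Stiefel variety $\op{V}_{2,r}=\op{SO}(r)/\op{SO}(r-2)$. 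The problem therefore reduces to producing a lift of $f$ along this stabilization.

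Next, apply $\mathbb{A}^1$-obstruction theory (in the form developed by Morel) to the lifting problem. The successive obstructions lie in Nisnevich cohomology groups of the shape $H^{i+1}_{\op{Nis}}(X;\boldsymbol\pi^{\mathbb{A}^1}_i(\op{V}_{2,r}))$ for $i\geq 0$, the coefficients being the (possibly twisted, via $f$) $\mathbb{A}^1$-homotopy sheaves of the fiber. By the connectivity result mentioned just above the theorem, $\boldsymbol\pi^{\mathbb{A}^1}_i(\op{V}_{2,r})=0$ for $i\leq n-2$, so every obstruction in that range automatically vanishes. For $i\geq n-1$ the cohomological degree satisfies $i+1\geq n>d$, and hence the Nisnevich cohomology of the smooth affine $d$-dimensional scheme $X$ vanishes in that degree regardless of the coefficient sheaf. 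Thus all obstructions disappear and $f$ lifts, producing the desired hyperbolic summand.

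The nontrivial geometric input, namely the $\mathbb{A}^1$-$(n-2)$-connectedness of $\op{V}_{2,2n}$ and $\op{V}_{2,2n+1}$, is already in hand, so the main point that needs care is the formal setup: one has to verify that the obtained lift in $\mathbb{A}^1$-homotopy genuinely corresponds, under the representability dictionary of \cite{gbundles2}, to an actual isometric decomposition $H\perp(\mathscr{P}',\phi')$, and that Morel's $\mathbb{A}^1$-obstruction theory is applicable at the possibly non-simply-connected stages (handled via twisted Nisnevich sheaves with action through $f$). These are the standard technical points for an argument of this type and should not introduce new difficulties.
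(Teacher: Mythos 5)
Your proposal is correct and follows essentially the same route as the paper's proof of Theorem~\ref{thm:stablesplitting}: representability via \cite{gbundles2}, the fiber sequence $\op{V}_{2,r}\to {\op{B}}_{\op{Nis}}\op{SO}(r-2)\to {\op{B}}_{\op{Nis}}\op{SO}(r)$ identifying the lift with splitting off a hyperbolic plane, and vanishing of all obstruction groups $\op{H}^{i+1}_{\op{Nis}}(X;\bm{\pi}^{\mathbb{A}^1}_i(\op{V}_{2,r}))$ by the $\mathbb{A}^1$-$(n-2)$-connectivity of the Stiefel varieties in low degrees and by the Nisnevich cohomological dimension $d\leq n-1$ in high degrees. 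Your closing remarks on the twisted coefficients and the representability dictionary match how the paper disposes of these points, so no gap remains.
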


The first obstruction that appears when we move toward the unstable range is described in the following. This is a quadratic-form version of Morel's theory of the Euler class, cf.~\cite{MField}. The following main result is proved in Propositions~\ref{prop:stiefel1} and \ref{prop:stiefel2} as well as Theorems~\ref{thm:euler1} and \ref{thm:euler2}. 

\begin{theorem}
Let $F$ be a perfect field of characteristic unequal to $2$, let $X=\op{Spec}A$ be a smooth affine scheme of dimension $d$ over $F$. 

\begin{enumerate}
\item The first non-vanishing $\mathbb{A}^1$-homotopy sheaf of $\op{V}_{2,2d+1}$ is 
\[
\bm{\pi}^{\mathbb{A}^1}_{d-1}(\op{V}_{2,2d+1})\cong \mathbf{K}^{\op{MW}}_d/(1+\langle(-1)^d\rangle). 
\]
Consequently, a generically trivial quadratic form  $(\mathscr{P},\phi)$ over $A$ of rank $2d+1$ which admits a spin lift splits off a hyperbolic plane if and only if an obstruction class in the following cohomology groups vanishes: 
\[
\op{H}^d_{\op{Nis}}(X,\mathbf{K}^{\op{MW}}_d/(1+\langle(-1)^d\rangle))\cong \left\{
\begin{array}{ll} \op{H}^d_{\op{Nis}}(X,\mathbf{K}^{\op{MW}}_d/2) & d\equiv 0\bmod 2\\
\op{H}^d_{\op{Nis}}(X,\mathbf{I}^d) & d\equiv 1\bmod 2.\end{array}\right.
\]
\item The first non-vanishing $\mathbb{A}^1$-homotopy sheaf of $\op{V}_{2,2d}$ is
\[
\bm{\pi}^{\mathbb{A}^1}_{d-1}(\op{V}_{2,2d})\cong \mathbf{K}^{\op{MW}}_d\times \mathbf{K}^{\op{MW}}_{d-1}. 
\]
Consequently, a generically trivial form $(\mathscr{P},\phi)$ over $A$ of rank $2d$ which admits a spin lift splits off a hyperbolic plane if and only if an obstruction class in the following product of cohomology groups vanishes: 
\[
\widetilde{\op{CH}}^d(X)\times \op{H}^d_{\op{Nis}}(X,\mathbf{K}^{\op{MW}}_{d-1})
\]
where the second factor has a presentation
\[
\op{H}^d_{\op{Nis}}(X,\mathbf{K}^{\op{MW}}_{d-1})\cong \op{H}^d_{\op{Nis}}(X,\mathbf{W})\cong \op{coker}\left(\beta\colon\op{CH}^{d-1}(X) \to \op{H}^d_{\op{Nis}}(X,\mathbf{I}^d)\right).
\]
\end{enumerate}
\end{theorem}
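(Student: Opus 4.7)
The plan is to compute the first non-vanishing $\mathbb{A}^1$-homotopy sheaf of $\op{V}_{2,n+2}$ using the fibration
$$Q_n \longrightarrow \op{V}_{2,n+2}\longrightarrow Q_{n+1}$$
arising from the tower $\op{SO}(n)\subset\op{SO}(n+1)\subset\op{SO}(n+2)$, whose fibre and base are the smooth affine quadrics $\op{SO}(n+1)/\op{SO}(n)\simeq_{\mathbb{A}^1} Q_n$ and $\op{SO}(n+2)/\op{SO}(n+1)\simeq_{\mathbb{A}^1} Q_{n+1}$. The first non-vanishing $\mathbb{A}^1$-homotopy sheaves of these motivic spheres are known from the work of Morel (for the odd-dimensional case $Q_{2k-1}\simeq_{\mathbb{A}^1} \mathbf{A}^k\smallsetminus 0$) and of Asok--Fasel (for $Q_{2k}$), both of the form $\mathbf{K}^{\op{MW}}_k$ in the appropriate degree. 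Feeding this into the long exact sequence of $\mathbb{A}^1$-homotopy sheaves yields, for $n+2=2d+1$, a surjection $\mathbf{K}^{\op{MW}}_d\twoheadrightarrow \bm{\pi}^{\mathbb{A}^1}_{d-1}(\op{V}_{2,2d+1})$, and for $n+2=2d$, an extension
$$0\longrightarrow \mathbf{K}^{\op{MW}}_{d-1}\longrightarrow \bm{\pi}^{\mathbb{A}^1}_{d-1}(\op{V}_{2,2d})\longrightarrow \mathbf{K}^{\op{MW}}_d\longrightarrow 0.$$

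The hardest step is identifying the boundary maps. In the odd case, the kernel of the surjection should be the image of multiplication by $1+\langle(-1)^d\rangle$ on $\mathbf{K}^{\op{MW}}_d$: this arises as the motivic Euler class of the tangent bundle of $Q_{2d}$ via the classifying-map description of the boundary morphism for a motivic sphere bundle, and the formula $1+\langle(-1)^d\rangle$, a quadratic refinement of the classical Euler number $\chi(S^{2d})=2$, should come from an explicit Rost--Schmid cycle representative. In the even case, I need to show both that the preceding boundary $\bm{\pi}_d^{\mathbb{A}^1}(Q_{2d-1})\to \mathbf{K}^{\op{MW}}_{d-1}$ vanishes and that the displayed short exact sequence splits; both should follow from an appropriate motivic section or stable splitting of $\op{V}_{2,2d}\to Q_{2d-1}$ coming from structural properties of the tower $\op{SO}(2d-1)\hookrightarrow\op{SO}(2d)$, and this is the most delicate part of the computation.

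With the homotopy sheaves computed, the splitting statement for quadratic forms follows from standard $\mathbb{A}^1$-obstruction theory. The $\mathbb{A}^1$-representability results of \cite{gbundles2} translate the question of splitting off a hyperbolic plane into the problem of lifting the classifying map of the form from $\op{B}\op{Spin}(n+2)$ to $\op{B}\op{Spin}(n)$, with $\mathbb{A}^1$-homotopy fibre $\op{V}_{2,n+2}$; the spin lift assumption lets us work with $\op{Spin}$ in place of $\op{SO}$, avoiding complications from the non-trivial low-degree $\mathbb{A}^1$-homotopy sheaves of $\op{SO}$. Since $\op{V}_{2,n+2}$ is $\mathbb{A}^1$-$(d-2)$-connected and $X$ has Nisnevich cohomological dimension $\leq d$, the primary and only non-trivial obstruction lies in $\op{H}^d_{\op{Nis}}(X,\bm{\pi}^{\mathbb{A}^1}_{d-1}(\op{V}_{2,n+2}))$.

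Finally, the translation into the listed cohomology groups reduces to Milnor--Witt K-theory algebra. For odd rank: when $d$ is even, $\langle(-1)^d\rangle=1$ so $1+\langle(-1)^d\rangle=2$ and $\mathbf{K}^{\op{MW}}_d/2\cong \mathbf{K}^{\op{M}}_d/2$; when $d$ is odd, $1+\langle-1\rangle=h$ is the hyperbolic element and $\mathbf{K}^{\op{MW}}_d/h\cong \mathbf{I}^d$. For even rank, the first factor $\op{H}^d(X,\mathbf{K}^{\op{MW}}_d)=\widetilde{\op{CH}}^d(X)$ is the Chow--Witt group by definition. For the second factor, apply Nisnevich cohomology to $0\to\mathbf{I}^d\to\mathbf{K}^{\op{MW}}_{d-1}\to\mathbf{K}^{\op{M}}_{d-1}\to 0$, use the vanishing $\op{H}^d(X,\mathbf{K}^{\op{M}}_{d-1})=0$ (from the Rost--Schmid complex on a smooth $d$-dimensional scheme) and the Bloch--Quillen identification $\op{H}^{d-1}(X,\mathbf{K}^{\op{M}}_{d-1})\cong\op{CH}^{d-1}(X)$ to identify $\op{H}^d(X,\mathbf{K}^{\op{MW}}_{d-1})$ with $\op{coker}(\beta\colon\op{CH}^{d-1}(X)\to\op{H}^d(X,\mathbf{I}^d))$; the further identification with $\op{H}^d(X,\mathbf{W})$ follows from the vanishing of the top Nisnevich cohomology of the filtration quotients $\mathbf{I}^n/\mathbf{I}^{n+1}\cong \mathbf{K}^{\op{M}}_n/2$ for $n\geq d$.
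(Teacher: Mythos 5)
Your overall skeleton coincides with the paper's: the fibration $\op{Q}_{n}\to\op{V}_{2,n+2}\to\op{Q}_{n+1}$, the long exact sequence of $\mathbb{A}^1$-homotopy sheaves, the reduction of the splitting question to a single obstruction in $\op{H}^d_{\op{Nis}}(X,\bm{\pi}^{\mathbb{A}^1}_{d-1}(\op{V}_{2,n+2}))$ via the spin lift, and the Milnor--Witt-algebra identifications at the end. But the two steps you yourself flag as the hardest are exactly where your write-up stops being a proof. For the odd case you assert that the connecting map is multiplication by $1+\langle(-1)^d\rangle$ because it ``arises as the motivic Euler class of the tangent bundle of $\op{Q}_{2d}$'' and that the value ``should come from an explicit Rost--Schmid cycle representative.'' As stated this does not work: the Chow--Witt Euler class of the rank-$2d$ tangent bundle lives in $\op{H}^{2d}_{\op{Nis}}(\op{Q}_{2d},\mathbf{K}^{\op{MW}}_{2d})$, which vanishes because $\op{Q}_{2d}$ is $\mathbb{A}^1$-equivalent to $\op{S}^d_s\wedge\mathbb{G}_{\op{m}}^{\wedge d}$ and hence has reduced cohomology concentrated in degree $d$. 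The invariant you actually need is the primary obstruction to a section of the quadric bundle $\op{V}_{2,2d+1}\to\op{Q}_{2d}$, an element of $\op{H}^d_{\op{Nis}}(\op{Q}_{2d},\mathbf{K}^{\op{MW}}_d)\cong\op{GW}(F)$; since this bundle is not the complement of the zero section of a rank-$d$ vector bundle (the tangent bundle has no maximal isotropic subbundle), the Morel/Asok--Fasel Euler class machinery for vector bundles cannot be cited off the shelf, and no Rost--Schmid representative is actually produced. The paper spends all of Section~\ref{sec:clutching} on precisely this point: an explicit Steenrod-style trivialization of the torsor $\op{SO}(2n+1)\to\op{Q}_{2n}$ over $\op{D}(1\pm Z)$, extraction of the clutching function $\op{Q}_{2n-1}\to\op{SO}(2n)$, identification of the connecting map with the composite $\op{Q}_{2n-1}\to\op{SO}(2n)\to\op{Q}_{2n-1}$ (via Proposition~\ref{prop:clutching}), and a genuine degree computation (Proposition~\ref{prop:degree}) handling the non-flatness of that map by Rost's deformation to the normal cone, yielding $[X_1^2]=2[X_1]$ or $\mathbb{H}[X_1]$ according to parity. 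Some argument of this kind (or an honest identification of the obstruction class with the $\mathbb{A}^1$-Euler characteristic of $\op{Q}_{2d}$) is indispensable and is missing from your proposal.

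The even case has the same problem in milder form: you say the vanishing of the boundary and the splitting of the extension ``should follow from an appropriate motivic section or stable splitting of $\op{V}_{2,2d}\to\op{Q}_{2d-1}$ coming from structural properties of the tower,'' but no such section is exhibited and the tower by itself does not give one. The paper's argument is short but real: relative obstruction theory shows the identity of $\op{Q}_{2d-1}$ lifts to a section because the only potentially nonzero obstruction group is $\op{H}^{d-1}_{\op{Nis}}(\op{Q}_{2d-1},\bm{\pi}^{\mathbb{A}^1}_{d-2}(\op{Q}_{2d-2}))$, which vanishes by the $\mathbb{A}^1$-$(d-2)$-connectedness of $\op{Q}_{2d-2}$; the section kills the boundary map and splits the sequence in all degrees (Proposition~\ref{prop:stiefel2}). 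Finally, a small repair in your last paragraph: the identification $\op{H}^d_{\op{Nis}}(X,\mathbf{K}^{\op{MW}}_{d-1})\cong\op{H}^d_{\op{Nis}}(X,\mathbf{W})$ does not follow just from vanishing of $\op{H}^d$ of the quotients $\mathbf{I}^n/\mathbf{I}^{n+1}$ (and your range ``$n\geq d$'' is the wrong one --- those quotients vanish in top cohomology for $n<d$); that only gives surjectivity, and one must also control the kernels, e.g.\ by comparing Rost--Schmid complexes in degrees $d-1$ and $d$, where $\op{GW}(\kappa(x))\to\op{W}(\kappa(x))$ is surjective and $\op{W}(\kappa(y))\to\op{W}(\kappa(y))$ is an isomorphism. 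The cokernel presentation via $\beta\colon\op{CH}^{d-1}(X)\to\op{H}^d_{\op{Nis}}(X,\mathbf{I}^d)$ you give matches the paper's proof of Theorem~\ref{thm:euler2} and is fine.
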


\begin{remark}
Note the two alternating patterns: the two items in the above theorem correspond to the parity in homotopy groups of complex Stiefel varieties. The two subcases in the first item repeat the parity in homotopy groups of the real Stiefel varieties. Note also that the parity in the first item of the above theorem arises since $\langle(-1)^d\rangle$ is the motivic degree of the antipodal map on the quadric $\op{Q}_{2d-1}$.
\end{remark}

The computation proceeds along the classical lines, cf. \cite{steenrod}, by writing out an explicit trivialization of the torsor $\op{SO}(n)\to\op{SO}(n+1)\to\op{Q}_{n}$ to obtain an explicit description of the connecting map in the long exact homotopy sequence. The homotopy sheaves of the Stiefel varieties are then given as cokernel of the connecting map whose degree can be computed explicitly. Actually, the computation provides similar information on some higher homotopy sheaves of orthogonal Stiefel varieties: for $\op{V}_{2,2n+1}$, Morel's Freudenthal suspension theorem can be used to show that the homotopy sheaves of orthogonal Stiefel varieties are (in some stable range) built from kernel and cokernel of multiplication by $2$ or $h$ on the corresponding homotopy sheaves of spheres, depending on parity, cf. Theorem~\ref{thm:stiefelhigh}. In the case $\op{V}_{2,2n}$, the splitting as product of homotopy sheaves of spheres is true in all degrees. In particular, some higher obstruction groups can be identified as soon as more information on $\mathbb{A}^1$-homotopy sheaves of spheres becomes available.

From the above computations of the relevant obstruction groups, we can deduce some stronger splitting results such as the following, cf. Proposition~\ref{prop:special1} and \ref{prop:special2} as well as further discussion and examples in Section~\ref{sec:examples1}: 

\begin{corollary}
Let $F$ be an algebraically closed field of characteristic unequal to $2$ and let $X=\op{Spec} A$ be a smooth affine variety of dimension $d$ over $F$. 
\begin{enumerate}
\item A generically trivial quadratic form $(\mathscr{P},\phi)$ over $A$ of rank $2d+1$ which admits a spin lift splits off a hyperbolic plane. 
\item A generically trivial form $(\mathscr{P},\phi)$ over $A$ of rank $2d$ which admits a spin lift splits off a hyperbolic plane if and only if its Edidin--Graham Euler class is trivial. 
\end{enumerate}
\end{corollary}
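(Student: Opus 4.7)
The plan is to apply the preceding theorem to translate the splitting question into the vanishing of explicit obstruction groups, and then show that these obstructions simplify drastically over an algebraically closed base. The entire calculation rests on one elementary input: for $F$ algebraically closed of characteristic~$\neq 2$ and $X/F$ smooth of dimension $d$, any residue field $k(x)$ has $\op{tr.deg}(k(x)/F)\leq d$, hence $2$-cohomological dimension~$\leq d$, so $\op{H}^{d+1}(k(x),\mathbb{Z}/2)=0$.

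Combining this with the Milnor conjecture $I^n/I^{n+1}\cong \op{H}^n(-,\mathbb{Z}/2)$ (Voevodsky, Orlov--Vishik) and the Arason--Pfister Hauptsatz forces $I^{d+1}(k(x))=0$ at every point, which in the unramified sheaf formalism gives $\mathbf{I}^{d+1}|_X=0$. The standard exact sequences
\[
0\to\mathbf{I}^{d+1}\to\mathbf{K}^{\op{MW}}_d\to\mathbf{K}^{\op{M}}_d\to 0,\qquad 0\to\mathbf{I}^{d+1}\to\mathbf{I}^d\to\mathbf{K}^{\op{M}}_d/2\to 0
\]
then collapse to isomorphisms $\mathbf{K}^{\op{MW}}_d|_X\cong\mathbf{K}^{\op{M}}_d|_X$ and $\mathbf{I}^d|_X\cong\mathbf{K}^{\op{M}}_d/2|_X$, so each of the obstruction targets $\op{H}^d_{\op{Nis}}(X,\mathbf{K}^{\op{MW}}_d/2)$ and $\op{H}^d_{\op{Nis}}(X,\mathbf{I}^d)$ becomes $\op{H}^d_{\op{Nis}}(X,\mathbf{K}^{\op{M}}_d/2)\cong\op{CH}^d(X)/2$ by Bloch's formula.

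The final ingredient is the vanishing of $\op{CH}^d(X)/2$ for $X$ smooth affine of dimension~$d$ over an algebraically closed field of characteristic~$\neq 2$. This follows from Suslin's rigidity identification $\op{CH}^d(X)/n\cong \op{H}^{2d}_{\et}(X,\mu_n^{\otimes d})$ for $n$ invertible in $F$, combined with Artin's vanishing theorem for \'etale cohomology of an affine variety above its dimension. This disposes of part~(1) of the corollary.

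For part~(2) the obstruction lies in $\widetilde{\op{CH}}^d(X)\times \op{H}^d_{\op{Nis}}(X,\mathbf{K}^{\op{MW}}_{d-1})$. The presentation in the theorem exhibits the second factor as a quotient of $\op{H}^d_{\op{Nis}}(X,\mathbf{I}^d)$, which has just been shown to vanish, so only the first factor can carry an obstruction. The same vanishing $\mathbf{I}^{d+1}|_X=0$ makes the comparison $\widetilde{\op{CH}}^d(X)\to\op{CH}^d(X)$ an isomorphism, and under this identification the motivic Euler class is the Edidin--Graham Euler class (as discussed earlier in the paper), giving the stated equivalence. The step I expect to be the main technical point is this last identification of the $\widetilde{\op{CH}}^d$-obstruction with Edidin--Graham's classical construction; once that is in hand, the rest of the argument is the direct sheaf-level reduction sketched above, driven purely by the $2$-cohomological dimension bound on residue fields.
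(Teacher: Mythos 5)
Part (1) of your proposal is correct, but it follows a genuinely different route from the paper: the paper compares the Rost--Schmid complexes for $\mathbf{K}^{\op{MW}}_d/2\to\mathbf{K}^{\op{M}}_d/2$ directly, using only that the residue fields at closed points are quadratically closed (so $\op{GW}(\kappa(y))/2\cong\mathbb{Z}/2$), and then kills $\op{Ch}^d(X)$ by divisibility of zero-cycles on affine varieties over algebraically closed fields (Roitman); you instead kill the whole sheaf $\mathbf{I}^{d+1}$ on $X$ via the bound $\op{cd}_2\leq d$ on residue fields together with the Milnor conjecture and Arason--Pfister, and then kill $\op{Ch}^d(X)$ via the top-degree cycle class map and Artin vanishing. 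Both arguments are sound (implicitly for $d\geq 1$, as in the paper); yours buys a clean sheaf-level statement, $\mathbf{I}^{d+1}|_X=0$, at the cost of heavier inputs, and the attribution of the isomorphism $\op{CH}^d(X)/n\cong\op{H}^{2d}_{\et}(X,\mu_n^{\otimes d})$ to ``Suslin rigidity'' is loose (it is a Bloch--Ogus/Bloch--Kato top-degree statement, or one can bypass it entirely with divisibility as the paper does), though the fact itself is true. Your reduction of the obstruction group in part (2) --- vanishing of the $\op{H}^d_{\op{Nis}}(X,\mathbf{W})$ factor via the surjection from $\op{H}^d_{\op{Nis}}(X,\mathbf{I}^d)$, and $\widetilde{\op{CH}}^d(X)\cong\op{CH}^d(X)$ --- also matches the paper.

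The genuine gap is in the second half of part (2): you assert that ``under this identification the motivic Euler class is the Edidin--Graham Euler class (as discussed earlier in the paper)'', but nothing preceding the corollary establishes this; it is exactly the substantive content of the even-rank statement, and without it your argument only shows that the form splits off a hyperbolic plane if and only if a certain class in $\op{CH}^d(X)$ vanishes, not that this class is the Edidin--Graham class. The paper proves the identification as follows: the hyperbolic morphism $H\colon\op{GL}_n\to\op{SO}(2n)$ fits into a cartesian square with the stabilizations $\op{GL}_{n-1}\to\op{GL}_n$ and $\op{SO}(2n-1)\to\op{SO}(2n)$, inducing a homotopy cartesian square of classifying spaces; hence for a hyperbolic bundle $(\mathscr{V}\oplus\mathscr{V}^\vee,\op{ev})$ the obstruction to reducing from $\op{SO}(2d)$ to $\op{SO}(2d-1)$ coincides with the obstruction to splitting a trivial line off $\mathscr{V}$, which by the Asok--Fasel comparison is (up to a unit) the Chow--Witt Euler class of $\mathscr{V}$, mapping to the top Chern class in $\op{CH}^d(X)$; by Edidin--Graham's own computation their class of a hyperbolic form is, up to sign, this top Chern class. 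You correctly flag this identification as the main technical point, but flagging it is not supplying it, so as written the proof of part (2) is incomplete.
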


Beyond the Edidin--Graham Euler class statement above, the obstruction classes haven't been identified exactly. Of course, more precise results and examples could be provided if one could identify the obstruction classes explicitly in terms of characteristic classes of the quadratic forms. However, at this point, the Chow--Witt characteristic classes of the orthogonal groups are not known. As the computation of $\widetilde{\op{CH}}^\bullet({\op{B}}_{\op{Nis}}G,\mathscr{L})$ for $G=\op{O}(n)$ resp. $G=\op{SO}(n)$ appears to be significantly more complicated than for  $G=\op{Sp}_{2n},\op{GL}_n,\op{SL}_n$, the relevant computations will be subject for future research.

As a final remark, I want to mention that it is also possible to ask if a stably hyperbolic form is the hyperbolic form of a vector bundle. Again, this question reduces to Witt cancellation over fields, but over schemes, stably hyperbolic forms aren't necessarily hyperbolic. In terms of bundle classification, this is the question of reduction of structure group along the hyperbolic homomorphism $H\colon\op{GL}_n\to \op{SO}(2n)$. The relevant obstruction theory is controlled by the space $\Gamma_n=\op{SO}(2n)/\op{GL}_n$. It is easy to show that the stabilization morphism $\Gamma_n\to\Gamma_{n+1}$ induces isomorphisms on $\bm{\pi}^{\mathbb{A}^1}_i$ for $i\leq n-2$, and the homotopy sheaves in this range are given by $\bm{\pi}^{\mathbb{A}^1}_i(\Gamma_n)=\bm{\pi}^{\mathbb{A}^1}_i(\op{O}/\op{GL})\cong \mathbf{GW}^3_i$. This shows that under the conditions of Theorem~\ref{thm:roy}, a stably hyperbolic form is already the hyperbolic form associated to some vector bundle (which is also covered by the results of Roy \cite{roy}). Computations of the first unstable homotopy sheaf could probably be done along the lines of the computations of Harris \cite{harris} and Massey \cite{massey}; note that the topological analogue of the hyperbolicity question above is the classification of almost complex structures on real vector bundles of even rank.

\subsection{Structure of the paper} 
We start with some preliminaries on quadratic forms and orthogonal groups in Section~\ref{sec:prelims}. The computation of the $\mathbb{A}^1$-homotopy of the Stiefel varieties requires the discussion of an orthogonal clutching construction, explicit trivializations of $\op{SO}(2n+1)$-torsors and computations of degrees of maps between motivic spheres in Section~\ref{sec:clutching}. This leads to the description of some $\mathbb{A}^1$-homotopy sheaves for orthogonal Stiefel varieties in Section~\ref{sec:stiefel};  consequences for splitting theorems and relevant examples are then given in Section~\ref{sec:splitting}. The appendix~\ref{sec:stabilization} also discuss some structural statements concerning the first unstable $\mathbb{A}^1$-homotopy sheaves of the orthogonal groups $\op{SO}(2n-1)$.

\section{Preliminaries on quadratic forms and orthogonal groups}
\label{sec:prelims}

In this paper, $F$ always denotes a perfect field of characteristic $\neq 2$. We consider smooth affine schemes $X=\op{Spec} A$ over $F$ and are interested in classification results for quadratic forms over the ring $A$. In the present section, we provide a short recollection on quadratic forms, orthogonal groups and their associated quadrics and Stiefel varieties. We also shortly discuss the $\mathbb{A}^1$-representability results for orthogonal groups and the resulting $\mathbb{A}^1$-obstruction theory approach to splitting questions.

\subsection{Quadratic forms as torsors}

Most of the material concerning quadratic forms can be found in standard textbooks on the subject, such as \cite{knus} (or \cite[Sections 3,4]{roy} which also deals with splitting questions similar to the ones in the present paper). A similar recollection was already used in \cite{hyperbolic-dim3}.

\begin{definition}
Let $F$ be a field of characteristic unequal to $2$. 
\begin{itemize}
\item A \emph{quadratic form} over a commutative $F$-algebra $A$ is given by a finitely generated projective $A$-module $\mathscr{P}$ together with a map $\phi\colon\mathscr{P}\to A$ such that for each $a\in A$ and $x\in\mathscr{P}$, we have $\phi(ax)=a^2\phi(x)$ and $B_\phi(x,y)=\phi(x+y)-\phi(x)-\phi(y)$ is a symmetric bilinear form $B_\phi\in \op{Sym}^2(\mathscr{P}^\vee)$. 
\item 
The \emph{rank} of the quadratic form is defined to be rank of the projective module $\mathscr{P}$. 
\item A quadratic form $(\mathscr{P},\phi)$ is \emph{non-singular} or \emph{non-degenerate} if the morphism $\mathscr{P}\to \mathscr{P}^\vee\colon x\mapsto B_\phi(x,-)$ is an isomorphism. 
\item An element $x\in\mathscr{P}$ is called \emph{isotropic} if $\phi(x)=0$. 
\item A morphism $f\colon (\mathscr{P}_1,\phi_1)\to (\mathscr{P}_2,\phi_2)$ of quadratic forms is an $A$-linear map $f\colon \mathscr{P}_1\to\mathscr{P}_2$ such that $\phi_2(f(x))=\phi_1(x)$ for all $x\in\mathscr{P}_1$. An isomorphism of quadratic forms is also called an \emph{isometry}. The automorphism group of a quadratic form is called the \emph{orthogonal group} of the quadratic form. 
\item Given two quadratic forms $(\mathscr{P}_1,\phi_1)$ and $(\mathscr{P}_2,\phi_2)$, there is a quadratic form 
\[
(\mathscr{P}_1,\phi_1)\perp(\mathscr{P}_2,\phi_2):=(\mathscr{P}_1\oplus\mathscr{P}_2, \phi_1+\phi_2)
\]
which is called the \emph{orthogonal sum}.
\end{itemize}
\end{definition}

\begin{example}
Let $A$ be a commutative ring and let $\mathscr{P}$ be a finitely generated projective module. Then there is a quadratic form whose underlying module is $\mathscr{P}\oplus \mathscr{P}^\vee$, equipped with the evaluation form $\op{ev}\colon(x,f)\mapsto f(x)$. The quadratic form $(\mathscr{P}\oplus\mathscr{P}^\vee,\op{ev})$ is called the \emph{hyperbolic space} associated to the projective module $\mathscr{P}$. In the special case where $\mathscr{P}=A$ is the free module of rank 1, this is called the \emph{hyperbolic plane} $\mathbb{H}$ over $A$. 
\end{example}

The following are the standard hyperbolicity notions from quadratic form theory, cf.~\cite[Section VIII.2]{knus}.

\begin{definition}
A quadratic form is called \emph{hyperbolic}, if it is isometric to $\mathbb{H}(\mathscr{P})$ for some projective module $\mathscr{P}$.  A quadratic form $(\mathscr{P},\phi)$ is called \emph{stably hyperbolic} if there exists a projective module $\mathscr{Q}$ such that $(\mathscr{P},\phi)\perp\mathbb{H}(\mathscr{Q})$ is hyperbolic. A quadratic form $(\mathscr{P},\phi)$ over an integral domain $A$ is called \emph{rationally hyperbolic} if $(\mathscr{P},\phi)\otimes_A\op{Frac}(A)$ is hyperbolic.
\end{definition}

\begin{remark}
Note that stably hyperbolic forms are then necessarily of even rank, and stably hyperbolic forms are those that become $0$ in the Witt ring.
\end{remark}

For the purposes of the present paper, we will also be interested in stricter notions of stable triviality of quadratic forms:

\begin{definition}
A quadratic form $(\mathscr{P},\phi)$ is called \emph{stably trivial} if it becomes isometric to one of the split forms $\mathbb{H}^{\perp n}$ or $\mathbb{H}^{\perp n}\perp (A,a\mapsto a^2)$ after adding sufficiently many hyperbolic planes. 
\end{definition}

The stably trivial forms are those which represent classes in $\mathbb{Z}\cdot [\mathbb{H}]\oplus\mathbb{Z}\cdot [(A,a\mapsto a^2)]\subseteq \op{GW}(A)$ in the Grothendieck--Witt ring of $A$. 

The classical Witt cancellation theorem implies in particular that the notions of hyperbolic, stably trivial and stably hyperbolic all agree over fields: 

\begin{proposition}[Witt cancellation theorem]
Let $k$ be a field of characteristic $\neq 2$ and let $(V_1,\phi_1)$ and $(V_2,\phi_2)$ be two quadratic forms over $k$. If $(V_1,\phi_1)\oplus\mathbb{H}\cong (V_2,\phi_2)\oplus\mathbb{H}$ then $(V_1,\phi_1)\cong(V_2,\phi_2)$. In particular, a stably hyperbolic form is hyperbolic. 
\end{proposition}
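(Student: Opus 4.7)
The plan is to reduce Witt cancellation to a standard \emph{reflection lemma} asserting that the orthogonal group of a non-degenerate form acts transitively on vectors of a fixed non-zero norm. Since the characteristic of $k$ is not $2$, the hyperbolic plane diagonalises as $\mathbb{H}\cong\langle 1\rangle\perp\langle -1\rangle$, so the hypothesis yields an isometry
\[
\sigma\colon V_1\perp\langle 1\rangle\perp\langle -1\rangle\xrightarrow{\sim} V_2\perp\langle 1\rangle\perp\langle -1\rangle.
\]
It therefore suffices to establish the following one-dimensional cancellation: for every $a\in k^\times$, if $W_1\perp\langle a\rangle\cong W_2\perp\langle a\rangle$ then $W_1\cong W_2$. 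Applying this first with $a=1$ and then with $a=-1$ cancels $\mathbb{H}$ and yields the theorem.

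The reflection lemma reads: if $(V,\phi)$ is non-degenerate and $x,y\in V$ satisfy $\phi(x)=\phi(y)\neq 0$, then some isometry of $V$ sends $x$ to $y$. For any anisotropic $v\in V$ the reflection $\tau_v(w):=w-B_\phi(v,w)\phi(v)^{-1}v$ is an isometry, as one checks by expanding $\phi(\tau_v(w))$ using bilinearity of $B_\phi$ together with $B_\phi(v,v)=2\phi(v)$. Using these identities and $\phi(x)=\phi(y)$ one computes
\[
B_\phi(x,x-y)=2\phi(x)-B_\phi(x,y)=\phi(x-y),
\]
so $\tau_{x-y}(x)=x-(x-y)=y$ whenever $\phi(x-y)\neq 0$. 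In the remaining case $\phi(x-y)=0$, the identity $\phi(x-y)+\phi(x+y)=2\phi(x)+2\phi(y)=4\phi(x)$ combined with the hypothesis on the characteristic forces $\phi(x+y)\neq 0$; the same computation yields $\tau_{x+y}(x)=-y$, and a final reflection in $y$ sends $-y$ to $y$, so $\tau_y\circ\tau_{x+y}$ completes the argument.

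To deduce the one-dimensional cancellation from the lemma, let $e$ and $e'$ be the canonical norm-$a$ generators of the two $\langle a\rangle$ summands, and let $\sigma\colon W_1\perp\langle a\rangle\to W_2\perp\langle a\rangle$ be the given isometry. The lemma produces an isometry $\tau$ of $W_2\perp\langle a\rangle$ with $\tau(\sigma(e))=e'$, so $\tau\sigma$ sends $e$ to $e'$ and hence restricts to an isometry $e^\perp\to (e')^\perp$, i.e.\ $W_1\cong W_2$. The ``in particular'' clause follows by writing a stably hyperbolic form as $(V,\phi)\perp\mathbb{H}^{\perp n}\cong\mathbb{H}^{\perp m}$ (using that projective modules over a field are free) and cancelling $\mathbb{H}$ a total of $n$ times. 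The only real subtlety in the argument is the isotropic case $\phi(x-y)=0$ of the reflection lemma, dispatched by the two-step reflection through $x+y$ and $y$; everything else is bookkeeping around the diagonalisation of $\mathbb{H}$ and the passage to orthogonal complements.
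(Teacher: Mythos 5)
Your proof is correct: diagonalising $\mathbb{H}$ as $\langle 1\rangle\perp\langle -1\rangle$ in characteristic $\neq 2$, reducing to rank-one cancellation, and establishing transitivity on vectors of equal non-zero norm via reflections (with the two-step $\tau_y\circ\tau_{x+y}$ device when $\phi(x-y)=0$) is the standard Witt argument, and each of your computations checks out, including the passage to orthogonal complements and the ``in particular'' clause using freeness of projective modules over a field. The paper states this classical proposition as background without giving a proof (cf.\ \cite{knus}), so there is no alternative argument in the paper to compare with; yours is exactly the textbook proof one would supply.
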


\begin{remark}
\label{rem:abuse2}
All quadratic forms over fields considered in this paper are hyperbolic or of the form $\mathbb{H}^{\perp n}\oplus(A,a\mapsto a^2)$. In abuse of notation, the group $\op{SO}(n)$ will always denote the special orthogonal group associated to the \emph{split form} of rank $n$. I apologize to anyone who might be offended by this.
\end{remark}

Now we recall the representability theorem for torsors from \cite{gbundles2}. The identification of quadratic forms with torsors for the orthogonal groups is classical, cf. \cite[Section 5]{RojasVistoli} or \cite[Section 2]{hyperbolic-dim3}.

\begin{theorem}
\label{thm:representability}
Let $k$ be a field, and let $X=\op{Spec} A$ be a smooth affine $k$-scheme. Let $G$ be a reductive group such that each absolutely almost simple component of $G$ is isotropic. Then there is a bijection 
\[
\op{H}^1_{\op{Nis}}(X;G)\cong [X, {\op{B}}_{\op{Nis}}G]_{\mathbb{A}^1}
\]
between the pointed set of isomorphism classes of rationally trivial $G$-torsors over $X$ and the pointed set of $\mathbb{A}^1$-homotopy classes of maps $X\to {\op{B}}_{\op{Nis}}G$.
\end{theorem}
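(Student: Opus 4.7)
The plan is to follow the general affine representability framework developed in \cite{gbundles2}. The core idea is to show that the presheaf
\[
\mathscr{F}\colon X \mapsto \op{H}^1_{\op{Nis}}(X;G)
\]
on smooth affine $k$-schemes is $\mathbb{A}^1$-invariant under the isotropy hypothesis on $G$, and then to invoke an affine recognition principle which identifies the values of such a presheaf with $\mathbb{A}^1$-homotopy classes of maps into the associated Nisnevich classifying space ${\op{B}}_{\op{Nis}}G$.

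First I would establish $\mathbb{A}^1$-invariance of $\mathscr{F}$ on smooth affine schemes. Concretely, for each $X=\op{Spec} A$ with $A$ smooth over $k$, one needs to show that pullback along the projection $X\times\mathbb{A}^1\to X$ induces a bijection on isomorphism classes of rationally trivial Nisnevich $G$-torsors. This is precisely where the assumption that every absolutely almost simple component of $G$ is isotropic enters: one invokes results of Raghunathan, Ramanathan and Bhatwadekar--Rao type, asserting that every rationally trivial $G$-torsor over $R[T]$ is extended from $\op{Spec} R$. Together with descent for torsors, this produces a Nisnevich-local, $\mathbb{A}^1$-invariant simplicial presheaf on smooth affine schemes whose $\pi_0$ computes $\op{H}^1_{\op{Nis}}(-;G)$.

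Next I would appeal to the affine Brown-type recognition principle. The statement needed is that for any presheaf of simplicial sets $\mathscr{G}$ on smooth affine $k$-schemes which is both Nisnevich-local and $\mathbb{A}^1$-invariant, the canonical map
\[
\pi_0\mathscr{G}(X) \longrightarrow [X,\mathscr{G}]_{\mathbb{A}^1}
\]
is a bijection for every smooth affine $X$. Applied to the classifying simplicial presheaf of $G$-torsors, and combined with the invariance established in the previous step, this yields the desired bijection.

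The hard part will be the $\mathbb{A}^1$-invariance step. For split simply connected groups this goes back to Raghunathan; the general reductive case with isotropic absolutely almost simple factors over an arbitrary field rests on a careful analysis of Whitehead groups, elementary subgroups and local triviality criteria for $G$-torsors, which constitutes the main technical contribution of \cite{gbundles2}. By contrast, the descent and recognition ingredients are formal once the invariance input is in place, so the technical heart of the theorem really lies in extending $\mathbb{A}^1$-invariance beyond the simply connected split case.
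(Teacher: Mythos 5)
The paper does not prove Theorem \ref{thm:representability} at all: it is imported verbatim from \cite{gbundles2}, so there is no internal proof to compare against. Your outline correctly reproduces the strategy of that reference --- $\mathbb{A}^1$-invariance of rationally trivial $G$-torsors over smooth affine schemes for reductive $G$ with isotropic absolutely almost simple factors (via extension results over $R[T]$ of Raghunathan--Ramanathan type), combined with the affine Brown-type recognition principle for Nisnevich-local, $\mathbb{A}^1$-invariant simplicial presheaves --- so your proposal is correct and takes essentially the same route as the cited source.
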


As a consequence, for the split group $\op{SO}(n)$ and a smooth affine scheme $X=\op{Spec} A$, we get a natural bijection between the pointed set of generically split quadratic forms over the ring $A$ and the pointed set of homotopy classes of maps $[X,\op{B}_{\op{Nis}}\op{SO}(n)]_{\mathbb{A}^1}$.\footnote{Note that we are talking about \emph{unpointed} maps here.} There is a similar statement for the spin groups which are $\mathbb{A}^1$-connected. In this case, the corresponding classifying spaces are $\mathbb{A}^1$-simply connected and consequently there is a canonical bijection between pointed maps $X_+\to\op{B}_{\op{Nis}}\op{Spin}(n)$ and unpointed maps $X\to\op{B}_{\op{Nis}}\op{Spin}(n)$. This is not true for the special orthogonal groups, where the sheaf of connected components is the Nisnevich sheaf $\mathcal{H}^1_{\et}(\mu_2)$; the unpointed classification is given by taking a quotient of the pointed result by the action of the fundamental group sheaf, cf. \cite{hyperbolic-dim3} for more details.

%identification of quadratic forms in terms of torsors for orthogonal groups, we refer to \cite[Section 2]{hyperbolic-dim3}. check if we need to state relations between quadratic forms and spin torsors. 

\subsection{Split orthogonal groups}
\label{sec:orthogonal}

For some of the computations, we will need an explicit description of the split orthogonal groups. 

For the orthogonal group $\op{SO}(2n)$ on an even-dimensional vector space,  the set of variables is given by $(X_1,\dots,X_n,X_{-n},\dots,X_{-1})$, and the symmetric bilinear form of rank $2n$ is given by $B(u,v)=u^{\op{t}}Jv$ where $J$ is the matrix with $J_{\alpha,\beta}=\delta_{\alpha,-\beta}$. The associated quadratic form is $\sum_{i=1}^nX_iX_{-i}$. The orthogonal group $\op{O}(2n)$ is the subgroup of $\op{GL}_{2n}$ consisting of the matrices $A$ satisfying $AJA^{\op{t}}=J$. The special orthogonal group $\op{SO}(2n)$ is given as the intersection $\op{SO}(2n)=\op{SL}_{2n}\cap\op{O}(2n)$ inside $\op{GL}_{2n}$.  

For the orthogonal group $\op{SO}(2n+1)$, we use the following concrete realization of the split symmetric bilinear form of rank $2n+1$, cf. \cite[Section 1]{vavilov}. The set of variables is given by  $(X_1,\dots,X_n,X_0,X_{-n},\dots,X_{-1})$. Consider the matrix 
\[
J_{\alpha,\beta}=\left\{\begin{array}{ll}
\frac{1}{2} & \alpha=-\beta\neq 0\\
1 & \alpha=\beta=0\\
0 & \textrm{ otherwise}
\end{array}\right.
\]
The orthogonal group $\op{O}(2n+1)$ is the subgroup of $\op{GL}_{2n+1}$ consisting of the matrices $A$ satisfying $AJA^{\op{t}}=J$. The special orthogonal subgroup $\op{SO}(2n+1)$ is given as the intersection $\op{SO}(2n+1)=\op{SL}_{2n+1}\cap \op{O}(2n+1)$ inside $\op{GL}_{2n+1}$. 

The orthogonal group $\op{O}(2n+1)$ preserves the bilinear form $B(u,v)=u^{\op{t}}Jv$, resp. the quadratic form $X_0^2+\sum_{i=1}^n X_iX_{-i}$. The inner product of the standard basis vectors $(\op{e}_1,\dots,\op{e}_n,\op{e}_0,\op{e}_{-n},\dots,\op{e}_{-1})$ is given by
\[
(\op{e}_i,\op{e}_j)=\delta_{i,-j}+\delta_{i,0}\delta_{j,0}.
\]
An orthogonal basis (with vectors of length $1$ or $-1$) is given by 
\[
v_{\pm i} =(\op{e}_i\pm \op{e}_{-i}), \qquad v_0=\op{e}_0.
\]
A vector $u=(t_1,\dots,t_n,t_0,t_{-n},\dots,t_{-1})$ can be expressed in the orthogonal basis as follows:
\[
u=t_0v_0+\sum_{i=1}^n\left(\left(\frac{t_i+t_{-i}}{2}\right)v_i+ \left(\frac{t_i-t_{-i}}{2}\right)v_{-i}\right).
\]

In the above orthogonal groups, we have the natural (Householder) reflections: if $w$ is a vector which is not isotropic, then the reflection at the hyperplane perpendicular to $w$ is given by
\[
\tau_w(x)=x-\frac{2 B(w,x)}{B(w,w)}w.
\]
If $u,v$ are vectors in $V$ such that $w:=u-v$ is not isotropic, the above reflection for $w$ maps $u$ to $v$. 

\subsection{Quadrics and Stiefel varieties}
\label{sec:quadrics}

We recall some facts about smooth affine split quadrics and Stiefel varieties. First, the odd-dimensional smooth affine split quadrics are defined as follows:
\[
\op{Q}_{2n-1}:=\op{Spec}k[X_1,\dots,X_n,Y_1,\dots,Y_n]/(\sum X_iY_i-1)
\]

For the even-dimensional quadrics, there are two possible presentations over fields of characteristic $\neq 2$. One definition, the one used e.g. in \cite{AsokDoranFasel} provides a scheme definable over the integers: 
\[
\op{Q}_{2n}':=\op{Spec} k[X'_1,\dots,X'_n,Y'_1,\dots,Y'_n,Z']/(\sum X'_iY'_i-Z'(1+Z'))
\]
However, it seems that this quadric is not globally a homogeneous space for the split orthogonal groups over $\mathbb{Z}$. Since the later computations and constructions require the relation to the orthogonal groups, we will use the following presentation of the smooth split affine quadrics, defined by  the quadratic form $q$ used to define the explicit model of $\op{SO}(2n+1)$ discussed above: 
\[
\op{Q}_{2n}:=\op{Spec} k[X_1,\dots,X_n,Y_1,\dots,Y_n,Z]/(\sum X_iY_i+Z^2-1). 
\]

An isomorphism between the quadrics $\op{Q}_{2n}$ and $\op{Q}_{2n}'$ is given as follows, cf. \cite{AsokDoranFasel}\footnote{Note that the explicit isomorphism is only found in some earlier arXiv version, not the published version of the paper.}: 
\[
\op{Q}_{2n}'\to \op{Q}_{2n}\colon  X'_i\mapsto -X_i/2, \, Y'_i\mapsto Y_i/2, \, Z' \mapsto (Z-1)/2.
\]
This morphism takes the form $\sum_iX'_iY'_i-Z'(1+Z')$ to
\[
\frac{1}{4}\left(-\sum_iX_iY_i-(Z-1)(Z+1)\right)= \frac{1}{4}\left(-\sum_iX_iY_i-Z^2+1\right).
\]
Under this isomorphism, the principal open subsets $\op{D}(Z')$ and $\op{D}(1+Z')$ appearing in the clutching construction of \cite{AsokDoranFasel} are mapped to the principal open subsets $\op{D}(Z-1)$ and $\op{D}(Z+1)$ in the quadric $\op{Q}_{2n}$. 

The natural choice of base point $v$ for $\op{Q}_{2n-1}$ is given by $X_1=Y_1=1$ and $X_i=Y_i=0$ for $i\geq 2$. Similarly, the natural choice of base point $v$ for $\op{Q}_{2n}$ is given by $Z=1$ and $X_i=Y_i=0$ for all $i$. With these choices of base point, we have natural projections
\[
\pi\colon \op{SO}(m)\to\op{Q}_{m-1}\colon A\mapsto A\cdot v
\]
which induce isomorphisms $\op{Q}_{m-1}\cong \op{SO}(m)/\op{SO}(m-1)$. 

%definition of natural projection $\op{SO}(2n+1)\to\op{Q}_{2n}$, fixing a base point $v\in \op{Q}_{2n}$ the natural projection morphism is given by 
%\[\op{SO}(2n+1)\to\op{Q}_{2n}:A\mapsto A\cdot v\]

Consider the stabilization homomorphism $\op{SO}(n-2)\hookrightarrow\op{SO}(2)$. The corresponding quotient $\op{V}_{2,n}:=\op{SO}(n)/\op{SO}(n-2)$ is one example of a \emph{Stiefel variety}. Since homogeneous spaces $\op{GL}_n/\op{GL}_{n-k}$ have been already been called Stiefel varieties in the motivic homotopy literature, we will use the distinction \emph{orthogonal Stiefel varieties}. There is a fiber bundle
\[
\op{SO}(n-1)/\op{SO}(n-2)\to \op{SO}(n)/\op{SO}(n-2)\to \op{SO}(n)/\op{SO}(n-1).
\]
Via the above identifications of $\op{SO}(n)/\op{SO}(n-1)$ as smooth affine quadrics, this allows to write the orthogonal Stiefel varieties as sphere bundles over a sphere:
\[
\op{Q}_{n-2}\to \op{V}_{2,n}\to \op{Q}_{n-1}.
\]

Via the representability results of \cite{gbundles2}, the above geometric statements provide $\mathbb{A}^1$-fiber sequences which we will use to describe some $\mathbb{A}^1$-homotopy sheaves of orthogonal Stiefel varieties. 

First of all, there is an $\mathbb{A}^1$-fiber sequence related to stabilization and splitting off hyperbolic planes. If $X\to {\op{B}}_{\op{Nis}}\op{SO}(n)$ classifies a generically trivial quadratic form, the form splits off a hyperbolic plane if and only if the classifying map lifts through ${\op{B}}_{\op{Nis}}\op{SO}(n-2)\to {\op{B}}_{\op{Nis}}\op{SO}(n)$. By the below fiber sequence, the obstructions to splitting off hyperbolic planes will then live in the cohomology with coefficients in $\bm{\pi}^{\mathbb{A}^1}_i(\op{V}_{2,n})$. 

\begin{proposition}
\label{prop:stabil}
Let $F$ be a perfect field of characteristic $\neq 2$. Under the correspondence of Theorem~\ref{thm:representability}, the morphism 
\[
{\op{B}_{\op{Nis}}}\op{SO}(n-2)\to{\op{B}_{\op{Nis}}}\op{SO}(n)
\]
induced from the standard embedding $\op{SO}(n-2)\to\op{SO}(n)$ corresponds to adding a hyperbolic plane. There is an $\mathbb{A}^1$-homotopy fiber sequence
\[
\op{V}_{2,n}\to{\op{B}}_{\op{Nis}}\op{SO}(n-2)\to {\op{B}}_{\op{Nis}}\op{SO}(n). 
\]
\end{proposition}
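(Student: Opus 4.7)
The plan is to address the two assertions of the proposition separately, both via the torsor--classifying-space correspondence of Theorem~\ref{thm:representability}.

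For the identification of the stabilization map with adding a hyperbolic plane, I would work via the torsor interpretation of quadratic forms. Under the bijection of Theorem~\ref{thm:representability}, a generically trivial rank-$(n-2)$ form $(\mathscr{P},\phi)$ corresponds to its isometry torsor $T=\underline{\op{Isom}}((\mathscr{P},\phi),(\mathcal{O}_X^{n-2}, q_{\op{st}}))$, and the morphism $\op{B}_{\op{Nis}}\op{SO}(n-2)\to \op{B}_{\op{Nis}}\op{SO}(n)$ corresponds, under this identification, to the change-of-structure-group functor $T\mapsto T\times^{\op{SO}(n-2)}\op{SO}(n)$. Because the standard embedding $\op{SO}(n-2)\hookrightarrow \op{SO}(n)$ is defined, via the orthogonal decomposition of the split form of rank $n$ into the split form of rank $n-2$ and a hyperbolic plane, as the subgroup acting trivially on the $\mathbb{H}$-factor, an unwinding of the associated-bundle construction shows that the extended torsor is naturally isomorphic to the isometry torsor of $(\mathscr{P},\phi)\perp \mathbb{H}$. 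Concretely, an isometry $\sigma\colon (\mathscr{P},\phi)\to (\mathcal{O}^{n-2}, q_{\op{st}})$ lifts to $\sigma\oplus\op{id}_{\mathbb{H}}$, and this assignment induces the desired isomorphism of torsors.

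For the second statement, I would appeal to the general principle that, whenever $H\hookrightarrow G$ is a closed embedding of smooth algebraic groups such that the quotient map $G\to G/H$ is Nisnevich-locally trivial as an $H$-principal bundle, there is an induced $\mathbb{A}^1$-homotopy fiber sequence
\[
G/H\longrightarrow \op{B}_{\op{Nis}}H\longrightarrow \op{B}_{\op{Nis}}G.
\]
This is the $\mathbb{A}^1$-analogue of the classical topological fact and follows by modelling $\op{B}_{\op{Nis}}H\to\op{B}_{\op{Nis}}G$ by $\op{E}G/H\to\op{E}G/G$, whose homotopy fiber is visibly $G/H$. Applied with $H=\op{SO}(n-2)\subset \op{SO}(n)=G$, together with the identification $\op{V}_{2,n}=\op{SO}(n)/\op{SO}(n-2)$ recalled in Section~\ref{sec:quadrics}, this produces the claimed fiber sequence.

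The main technical obstacle is the verification that $\op{SO}(n)\to \op{V}_{2,n}$ is Nisnevich-locally trivial as an $\op{SO}(n-2)$-torsor. The plan here is to factor the projection as $\op{SO}(n)\to \op{V}_{2,n}\to \op{Q}_{n-1}$ and construct explicit local sections of each factor by Householder reflections: the formula from Section~\ref{sec:orthogonal} produces, for each non-isotropic $w=u-v$, a reflection mapping $u$ to $v$, hence a section of the projection $\op{SO}(m)\to \op{Q}_{m-1}$ on the standard affine open complement of the antipode of the base point. Combining such local trivializations of $\op{SO}(n)\to \op{Q}_{n-1}$ (an $\op{SO}(n-1)$-torsor) and of $\op{SO}(n-1)\to \op{Q}_{n-2}$ (an $\op{SO}(n-2)$-torsor) yields Zariski, and in particular Nisnevich, local trivializations of $\op{SO}(n)\to \op{V}_{2,n}$. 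Alternatively, one can invoke the general representability framework of \cite{gbundles2}, where such Nisnevich-local triviality for quotients of reductive groups with isotropic components is treated abstractly.
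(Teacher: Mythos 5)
Your first paragraph (extension of structure group along $\op{SO}(n-2)\hookrightarrow\op{SO}(n)$ corresponds to $(\mathscr{P},\phi)\mapsto(\mathscr{P},\phi)\perp\mathbb{H}$) is fine and matches what the paper leaves implicit. The genuine gap is in the second part: you assert as a ``general principle'' that Nisnevich-local triviality of $G\to G/H$ alone yields an $\mathbb{A}^1$-homotopy fiber sequence $G/H\to\op{B}_{\op{Nis}}H\to\op{B}_{\op{Nis}}G$, justified by the remark that the homotopy fiber of $\op{E}G/H\to\op{E}G/G$ is ``visibly'' $G/H$. That argument only produces a \emph{simplicial} (Nisnevich-local) fiber sequence. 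The whole difficulty in the motivic setting is that $\mathbb{A}^1$-localization does not preserve homotopy fiber sequences, so the passage from the local fiber sequence to an $\mathbb{A}^1$-fiber sequence is exactly the nontrivial step; it is not automatic from local triviality of the torsor. This is why the paper's proof does not argue this way but instead invokes the machinery of \cite[Section 2.4]{gbundles2}, with an argument as in \cite[Theorem 4.2.2]{gbundles2}, which uses the affine representability/$\mathbb{A}^1$-naivety results for torsors under isotropic reductive groups (the same input as Theorem~\ref{thm:representability}) to conclude that this particular simplicial fiber sequence survives $\mathbb{A}^1$-localization. Your closing sentence cites \cite{gbundles2} only for Nisnevich-local triviality of the quotient map, not for this preservation statement, so the key point remains unaddressed.

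Two smaller remarks. First, your explicit approach to local triviality is workable but has a determinant slip: a single Householder reflection has determinant $-1$, so it gives a local section of $\op{O}(m)\to\op{Q}_{m-1}$, not of $\op{SO}(m)\to\op{Q}_{m-1}$; one must compose two reflections (this is precisely the rotation used in Section~\ref{sec:clutching}). Second, the paper gets the required triviality statement more cheaply: the torsor $\op{SO}(n)\to\op{V}_{2,n}$ becomes trivial after adding a hyperbolic plane, so by Witt cancellation over the function field it is rationally trivial, and then Zariski-local triviality follows from the general results for isotropic reductive groups over smooth affine bases. Your explicit construction would also do, but in either case the proof is incomplete without justifying why the resulting locally trivial fibration is an $\mathbb{A}^1$-fiber sequence.
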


\begin{proof}
The $\op{SO}(n-2)$-torsor $\op{SO}(n)\to \op{V}_{2,n}$ is Zariski-locally trivial: it is the universal $\op{SO}(n-2)$-torsor which becomes trivial after adding a hyperbolic plane. By Witt cancellation, it must already be rationally trivial. The claim then follows directly from the results of \cite[Section 2.4]{gbundles2}, with an argument similar to \cite[Theorem 4.2.2]{gbundles2}. 
\end{proof}

\begin{remark}
Note that there are similar $\mathbb{A}^1$-fiber sequences $\op{Q}_n\to {\op{B}}_{\op{Nis}}\op{SO}(n) \to{\op{B}}_{\op{Nis}}\op{SO}(n+1)$, cf. \cite[Theorems 4.2.1 and 4.2.2]{gbundles2}. 
\end{remark}

The easiest way to understand the  $\mathbb{A}^1$-homotopy sheaves of the Stiefel varieties $\op{V}_{2,n}$ is the following fiber sequence: 

\begin{lemma}
\label{lem:quadfib}
Let $F$ be a perfect field of characteristic $\neq 2$. There is an $\mathbb{A}^1$-fiber sequence $\op{Q}_{n-2}\to \op{V}_{2,n}\to \op{Q}_{n-1}$, and consequently there is a long exact sequence of $\mathbb{A}^1$-homotopy sheaves
\[
\cdots\to \bm{\pi}^{\mathbb{A}^1}_{i+1}(\op{Q}_{n-1})\to \bm{\pi}^{\mathbb{A}^1}_{i}(\op{Q}_{n-2})\to \bm{\pi}^{\mathbb{A}^1}_{i}(\op{V}_{2,n})\to \bm{\pi}^{\mathbb{A}^1}_{i}(\op{Q}_{n-1})\to\cdots
\]
Here the base points are induced from the standard base point on $\op{Q}_{n-2}$. 
\end{lemma}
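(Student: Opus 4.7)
The plan is to obtain the desired fiber sequence as the ``fiber of a composition'' applied to the composite stabilization
\[
{\op{B}}_{\op{Nis}}\op{SO}(n-2)\to {\op{B}}_{\op{Nis}}\op{SO}(n-1)\to {\op{B}}_{\op{Nis}}\op{SO}(n),
\]
which, on the nose, agrees with the stabilization morphism ${\op{B}}_{\op{Nis}}\op{SO}(n-2)\to{\op{B}}_{\op{Nis}}\op{SO}(n)$ induced from $\op{SO}(n-2)\hookrightarrow\op{SO}(n)$. I would first invoke the three $\mathbb{A}^1$-fiber sequences already on the table: by \prettyref{prop:stabil} the composite has $\mathbb{A}^1$-fiber $\op{V}_{2,n}$; by the remark following \prettyref{prop:stabil} (which is the special case of the same result for the rank-one stabilization, cf.\ \cite[Theorems 4.2.1 and 4.2.2]{gbundles2}), the first factor has $\mathbb{A}^1$-fiber $\op{Q}_{n-2}$ and the second factor has $\mathbb{A}^1$-fiber $\op{Q}_{n-1}$, where the geometric identifications $\op{SO}(m)/\op{SO}(m-1)\cong \op{Q}_{m-1}$ from Section~\ref{sec:quadrics} are used to recognise these fibers.

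Next, I would apply the general principle that for a composition $A\xrightarrow{f}B\xrightarrow{g}C$ of pointed morphisms in an $(\infty,1)$-topos (in our case, the pointed $\mathbb{A}^1$-homotopy category of simplicial Nisnevich sheaves), the induced diagram on homotopy fibers
\[
\op{fib}(f)\to \op{fib}(g\circ f)\to \op{fib}(g)
\]
is itself a fiber sequence. Feeding in $f={\op{B}}_{\op{Nis}}\op{SO}(n-2)\to {\op{B}}_{\op{Nis}}\op{SO}(n-1)$ and $g={\op{B}}_{\op{Nis}}\op{SO}(n-1)\to {\op{B}}_{\op{Nis}}\op{SO}(n)$ then produces exactly the desired $\mathbb{A}^1$-fiber sequence $\op{Q}_{n-2}\to \op{V}_{2,n}\to \op{Q}_{n-1}$. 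Unwinding the construction, the map $\op{V}_{2,n}\to \op{Q}_{n-1}$ is the canonical projection $\op{SO}(n)/\op{SO}(n-2)\twoheadrightarrow \op{SO}(n)/\op{SO}(n-1)$, and the inclusion of the fiber $\op{Q}_{n-2}\hookrightarrow\op{V}_{2,n}$ is induced by $\op{SO}(n-1)/\op{SO}(n-2)\hookrightarrow \op{SO}(n)/\op{SO}(n-2)$, which matches the geometric fiber sequence $\op{SO}(n-1)/\op{SO}(n-2)\to \op{SO}(n)/\op{SO}(n-2)\to \op{SO}(n)/\op{SO}(n-1)$ coming from the chain of subgroups $\op{SO}(n-2)\subset\op{SO}(n-1)\subset\op{SO}(n)$. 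The base points are the ones coming from the identity of $\op{SO}(n)$, matching the conventions of Section~\ref{sec:quadrics}.

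Finally, the long exact sequence of $\mathbb{A}^1$-homotopy sheaves is the standard one associated to any $\mathbb{A}^1$-fiber sequence of pointed $\mathbb{A}^1$-connected (or at least $\mathbb{A}^1$-simply connected in the relevant range) spaces; this is a direct application of Morel's long exact sequence in $\mathbb{A}^1$-homotopy. I expect the only mildly delicate step to be checking that the fiber-of-composition principle is available in the pointed unstable $\mathbb{A}^1$-homotopy category in the precise form required, but this is purely formal once \cite{gbundles2} has provided the three fiber sequences in question; no further geometric input is needed.
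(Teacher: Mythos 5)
Your argument is correct, but it follows a different route from the paper. You deduce the sequence purely formally: taking the three classifying-space fiber sequences already available (Proposition~\ref{prop:stabil} for $\op{V}_{2,n}\to{\op{B}}_{\op{Nis}}\op{SO}(n-2)\to{\op{B}}_{\op{Nis}}\op{SO}(n)$, and the rank-one sequences $\op{Q}_{m-1}\to{\op{B}}_{\op{Nis}}\op{SO}(m-1)\to{\op{B}}_{\op{Nis}}\op{SO}(m)$ from \cite{gbundles2}), and applying the fiber-of-a-composition principle, i.e.\ pasting of homotopy pullbacks, which is legitimate here since the $\mathbb{A}^1$-local model structure is proper and all three input sequences are already $\mathbb{A}^1$-fiber sequences. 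The paper argues geometrically instead: it observes that $\op{V}_{2,n}\to\op{Q}_{n-1}$ is the Zariski-locally trivial associated $\op{Q}_{n-2}$-bundle for the $\op{SO}(n-1)$-torsor $\op{SO}(n)\to\op{Q}_{n-1}$ and invokes the associated-bundle results of \cite{torsors} to realize the sequence as the pullback of $\op{Q}_{n-2}\to{\op{B}}_{\op{Nis}}\op{SO}(n-2)\to{\op{B}}_{\op{Nis}}\op{SO}(n-1)$ along the classifying map $\op{Q}_{n-1}\to{\op{B}}_{\op{Nis}}\op{SO}(n-1)$; it does not need Proposition~\ref{prop:stabil} as input, and it produces the fiber sequence with the geometric maps (the projection $\op{SO}(n)/\op{SO}(n-2)\to\op{SO}(n)/\op{SO}(n-1)$ and the fiber inclusion) on the nose, which is what gets used later when the connecting map is computed via the clutching construction. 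In your version that identification is exactly the step you dispose of with ``unwinding the construction'': to know that $\operatorname{fib}(gf)\to\operatorname{fib}(g)$ is the canonical projection under the gbundles2 identifications, you should note that $\op{V}_{2,n}\to{\op{B}}_{\op{Nis}}\op{SO}(n-2)\to{\op{B}}_{\op{Nis}}\op{SO}(n-1)$ classifies the extension of structure group of $\op{SO}(n)\to\op{V}_{2,n}$ along $\op{SO}(n-2)\subset\op{SO}(n-1)$, and that this $\op{SO}(n-1)$-torsor is the pullback of $\op{SO}(n)\to\op{Q}_{n-1}$ along the projection; this is a routine but genuine check, and spelling it out (or simply quoting \cite{torsors} as the paper does) would close the only soft spot in your write-up.
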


\begin{proof}
By definition, $\op{V}_{2,n}\cong\op{SO}(n)/\op{SO}(n-2)$, and the intermediate group $\op{SO}(n-1)$ gives rise to the maps $\op{Q}_{n-2}\cong\op{SO}(n-1)/\op{SO}(n-2)\hookrightarrow \op{V}_{2,n}$ and $\op{V}_{2,n}\twoheadrightarrow \op{SO}(n)/\op{SO}(n-1)\cong\op{Q}_{n-1}$. Zariski-locally, the $\op{Q}_{n-2}$-bundle $\op{V}_{2,n}\to\op{Q}_{n-1}$ is trivial; and it is the associated bundle for the $\op{SO}(n-1)$-torsor $\op{SO}(n)\to \op{Q}_{n-1}$ and the natural transitive $\op{SO}(n-1)$-action on $\op{Q}_{n-2}$, by definition. From this, using the results on associated bundles from \cite{torsors}, the claimed fiber sequence is obtained via pullback from the fiber sequence for $\op{Q}_{n-2}$ as follows:
\[
\xymatrix{
\op{Q}_{n-2}\ar[r] \ar[d] & \op{Q}_{n-2} \ar[d] \\
\op{V}_{2,n} \ar[r] \ar[d] & {\op{B}}_{\op{Nis}}\op{SO}(n-2) \ar[d] \\
\op{Q}_{n-1} \ar[r] & {\op{B}}_{\op{Nis}}\op{SO}(n-1)
}
\]
The claim about $\mathbb{A}^1$-homotopy sheaves follows directly. 
\end{proof}

\subsection{Obstruction theory}
%\label{sec:representable}

We recall some basics of relative obstruction theory and Moore--Postnikov factorizations which are used for the obstruction-theoretic approach to torsor classification in $\mathbb{A}^1$-homotopy. Most of the material here is taken from \cite{AsokFaselSplitting}. 

Suppose that $p\colon(\mathcal{E},x)\to (\mathcal{B},y)$ is a pointed map of $\mathbb{A}^1$-connected spaces and let $\mathcal{F}$ be the $\mathbb{A}^1$-homotopy fiber of $p$. Assume that $p$ is an $\mathbb{A}^1$-fibration, $\mathcal{B}$ is $\mathbb{A}^1$-local and that $\mathcal{F}$ is $\mathbb{A}^1$-simply connected. Then there are pointed spaces $(\mathcal{E}^{(i)},x_i)$, $i\in\mathbb{N}$  $\mathcal{E}^{(0)}=\mathcal{B}$, pointed morphisms 
\[
g^{(i)}\colon \mathcal{E}\to\mathcal{E}^{(i)}, \qquad 
h^{(i)}\colon \mathcal{E}^{(i)}\to \mathcal{B}, \qquad 
p^{(i)}\colon\mathcal{E}^{(i+1)}\to \mathcal{E}^{(i)}
\]
and commutative diagrams
\[
\xymatrix{
& \mathcal{E}^{(i+1)} \ar[rd]^{h^{(i+1)}} \ar[d]_{p^{(i)}} \\
\mathcal{E} \ar[ru]^{g^{(i+1)}} \ar[r]_{g^{(i)}} &\mathcal{E}^{(i)} \ar[r]_{h^{(i)}} & \mathcal{B}
}
\]
such that the following statements are satisfied: 
\begin{enumerate}
\item For any $i$, we have $h^{(i)}\circ g^{(i)}=p$. 
\item The morphism $\bm{\pi}^{\mathbb{A}^1}_n(\mathcal{E}) \to \bm{\pi}^{\mathbb{A}^1}_n(\mathcal{E}^{(i)})$ induced by $g^{(i)}$ is an isomorphisms for $n\leq i$ and an epimorphism for $n=i+1$.
\item The morphism $\bm{\pi}^{\mathbb{A}^1}_n(\mathcal{E}^{(i)})\to \bm{\pi}^{\mathbb{A}^1}_n(\mathcal{B})$ induced by $h^{(i)}$ is an isomorphism for $n>i+1$ and a monomorphism for $n=i+1$. 
\item The induced morphism $\mathcal{E}\to \op{holim}_i\mathcal{E}^{(i)}$ is an $\mathbb{A}^1$-weak equivalence. 
\end{enumerate}
For any $i$, there is an $\mathbb{A}^1$-fiber sequence
\[
\op{K}(\bm{\pi}_i^{\mathbb{A}^1}(\mathcal{F}),i)\to \mathcal{E}^{(i+1)}\xrightarrow{p^{(i)}} \mathcal{E}^{(i)}
\]
Moreover, the $p^{(i)}$ are twisted principal $\mathbb{A}^1$-fibrations, in the sense that there is a unique morphism (up to $\mathbb{A}^1$-homotopy)
\[
k_{i+1}\colon\mathcal{E}^{(i)}\to \op{K}^{\bm{\pi}_1^{\mathbb{A}^1}(\mathcal{B})}(\bm{\pi}_i^{\mathbb{A}^1}(\mathcal{F}),i+1),
\]
called the $k$-invariant, such that there is an $\mathbb{A}^1$-homotopy pullback square
\[
\xymatrix{
\mathcal{E}^{(i+1)} \ar[r] \ar[d] & \op{B}\bm{\pi}^{\mathbb{A}^1}_1(\mathcal{B}) \ar[d] \\
\mathcal{E}^{(i)} \ar[r]_{k_{i+1}} & \op{K}^{\bm{\pi}^{\mathbb{A}^1}_1(\mathcal{B})}(\bm{\pi}^{\mathbb{A}^1}_i(\mathcal{F}),i+1).
}
\]
If the base space $\mathcal{B}$ is $\mathbb{A}^1$-simply-connected, this reduces to an $\mathbb{A}^1$-fiber sequence 
\[
\mathcal{E}^{(i+1)}\to \mathcal{E}^{(i)} \xrightarrow{k_{i+1}}  \op{K}(\bm{\pi}^{\mathbb{A}^1}_i(\mathcal{F}),i+1).
\]

For a smooth scheme $X$ with a morphism $f\colon X\to \mathcal{B}$, the above Moore--Postnikov factorization provides a sequence of obstructions to lifting $f$ along $p\colon\mathcal{E}\to\mathcal{B}$: if we have already lifted to $\mathcal{E}^{(i)}$, then a lift to $\mathcal{E}^{(i+1)}$ exists if and only if the composition 
\[
X\to \mathcal{E}^{(i)}\xrightarrow{k_{i+1}} \op{K}^{\bm{\pi}^{\mathbb{A}^1}_1(\mathcal{B})}(\bm{\pi}^{\mathbb{A}^1}_i(\mathcal{F}), i+1)
\]
lifts to $X\to\op{B}\bm{\pi}^{\mathbb{A}^1}_1(\mathcal{B})$. If $\mathcal{B}$ is $\mathbb{A}^1$-simply-connected, then the obstruction is simply a cohomology class in $\op{H}^{i+1}_{\op{Nis}}(X,\bm{\pi}^{\mathbb{A}^1}_i(\mathcal{F}))$, and the set of lifts $X\to\mathcal{E}^{(i+1)}$ is then given as a quotient of $\op{H}^i_{\op{Nis}}(X,\bm{\pi}^{\mathbb{A}^1}_i(\mathcal{F}))$ by the image of the looping of the $k$-invariant. In the more general case, the obstructions and liftings are  elements of some $\bm{\pi}^{\mathbb{A}^1}_1(\mathcal{B})$-equivariant cohomology groups, cf. \cite[Section 6]{AsokFaselSplitting} for more details. Note for smooth schemes there are only finitely many obstructions since all obstructions above the dimension of the scheme vanish, for Nisnevich cohomological dimension reasons.

We shortly discuss the specific case of quadratic forms. First, we note that there is a sequence of $\mathbb{A}^1$-fiber sequences
\[
\xymatrix{
\op{V}_{2,n+2}\ar[r] \ar[d]_= & {\op{B}}_{\op{Nis}}\op{Spin}(n) \ar[r] \ar[d] & {\op{B}}_{\op{Nis}}\op{Spin}(n+2) \ar[d]\\
\op{V}_{2,n+2} \ar[r] & {\op{B}}_{\op{Nis}}\op{SO}(n) \ar[r] & {\op{B}}_{\op{Nis}}\op{SO}(n+2)
}
\]
Since the classifying spaces of the spin groups are $\mathbb{A}^1$-simply-connected, the obstruction groups for splitting off a trivial $\op{Spin}(2)$-torsor from a $\op{Spin}(n+2)$-torsor on a smooth scheme $X$ are given by $\op{H}^{i+1}_{\op{Nis}}(X,\bm{\pi}^{\mathbb{A}^1}_i(\op{V}_{2,n+2}))$.

On the other hand, the classifying spaces of the orthogonal groups  have a non-trivial $\mathbb{A}^1$-fundamental group, given by $\mathscr{H}^1_{\et}(\mu_2)$. We need to discuss the action of $\mathscr{H}^1_{\et}(\mu_2)$ on the $\mathbb{A}^1$-homotopy sheaves of the classifying spaces  ${\op{B}}_{\op{Nis}}\op{SO}(n)$. Note that over a field $K$, $\op{H}^1_{\et}(K,\mu_2)=K^\times/(K^\times)^2$. To understand the action of the square residues on the sections of $\bm{\pi}_i^{\mathbb{A}^1}{\op{B}}_{\op{Nis}}\op{SO}(n)$ one can follow the arguments in \cite[Section 6.2]{AsokFaselSplitting}. This allows to identify the action of the square residues as the usual action of square classes on strictly $\mathbb{A}^1$-invariant sheaves, given by restricting the Milnor--Witt module structure along the universal morphism $\mathbb{G}_{\op{m}}/2\to \mathbf{K}^{\op{MW}}_0$. 

If $X$ is a smooth $F$-scheme and $f:X\to {\op{B}}_{\op{Nis}}\op{SO}(n)$ is a map, then  the obstruction to lifting $f$ through the universal covering  ${\op{B}}_{\op{Nis}}\op{Spin}(n)\to {\op{B}}_{\op{Nis}}\op{SO}(n)$ is a torsor over $X$ with structure group $\mathbb{G}_{\op{m}}/2$, the spinor norm torsor, which can be concretely described as follows. Assume we have a Nisnevich covering $U_i\to X$ trivializing the $\op{SO}(n)$-torsor. The transition maps are morphisms $U_i\times_X U_j\to \op{SO}(n)$. We can compose these with the spinor norm $\op{SO}(n)\to \mathbb{G}_{\op{m}}/2$, and this provides the transition function for the $\mathbb{G}_{\op{m}}/2$-torsor over $X$. If this torsor is trivial, then the maps $X\to \op{K}^{\mathscr{H}^1_{\et}(\mu_2)}(\bm{\pi}^{\mathbb{A}^1}_i(\op{V}_{2,n+2}),i+1)$, corresponding to $\mathscr{H}^1_{\et}(\mu_2)$-equivariant cohomology of $X$ with coefficients in $\bm{\pi}^{\mathbb{A}^1}_i(\op{V}_{2,n+2})$ can be identified with ``ordinary'' cohomology groups $\op{H}^{i+1}_{\op{Nis}}(X,\bm{\pi}^{\mathbb{A}^1}_i(\op{V}_{2,n+2}))$. 

By the discussion in \cite[Section 6]{hyperbolic-dim3}, stably trivial quadratic forms always have spin lifts; and we will only focus on rationally trivial quadratic forms which have a spin lift in this paper. In particular, there is no need to deal with equivariance for the fundamental group $\mathscr{H}^1_{\et}(\mu_2)$ of the classifying spaces ${\op{B}}_{\op{Nis}}\op{SO}(n)$ for the present investigation. Put differently, to determine if a quadratic form splits off a hyperbolic plane, we can choose a spin lift and then use the above obstruction groups to determine if it splits off a trivial $\op{Spin}(2)$-torsor. 

\section{Orthogonal clutching and connecting map}
\label{sec:clutching}

For the later computation of the first non-vanishing $\mathbb{A}^1$-homotopy sheaf of the orthogonal Stiefel varieties, we need to identify the $\op{SO}(2n)$-torsor $\op{SO}(2n+1)\to\op{Q}_{2n}$ in terms of a clutching construction for a morphism $\op{Q}_{2n-1}\to \op{SO}(2n)$. This is obtained by an explicit trivialization of the torsor $\op{SO}(2n+1)\to\op{Q}_{2n}$ similar to the classical results in \cite[Section 23]{steenrod}. We also compute the degree of the composition $\op{Q}_{2n-1}\to\op{SO}(2n)\to\op{Q}_{2n-1}$ of the clutching function and the natural projection. These computations will be used in the description of the connecting map in the long exact $\mathbb{A}^1$-homotopy sequence for the Stiefel fibration $\op{Q}_{2n-1}\to\op{V}_{2,2n+1}\to\op{Q}_{2n}$ of Lemma~\ref{lem:quadfib}.

\subsection{Recollection of the clutching construction}

We shortly review the algebraic version of the clutching construction for vector bundles which was established in \cite{AsokDoranFasel}.  The relevant covering for the clutching construction for the quadric $\op{Q}_{2n}'$ (described by the equation $\sum X_i'Y_i'-Z'(1+Z')$)  is given by the principal open subschemes $\op{D}(Z')$ and $\op{D}(1+Z')$ whose intersection is $\op{D}(Z'(1+Z'))$. On the intersection, we have the morphism $\op{D}(Z'(1+Z'))\to \op{Q}_{2n-1}$ given by
\[
(X'_1,\dots,X'_n,Y'_1,\dots,Y'_n,Z')\mapsto \left(\frac{X'_1}{Z'},\dots,\frac{X'_n}{Z'},\frac{Y'_1}{1+Z'},\dots, \frac{Y'_n}{1+Z'}\right).
\]
Via the isomorphism $\op{Q}_{2n}'\to\op{Q}_{2n}$, cf. Section~\ref{sec:quadrics}, the covering above corresponds to the covering of $\op{Q}_{2n}$ given by the principal open subschemes $\op{D}(Z-1)$ and $\op{D}(Z+1)$ with intersection $\op{D}(Z^2-1)$. The natural projection $\op{D}(Z^2-1)\to\op{Q}_{2n-1}$ is given by 
\[
X_i\mapsto \frac{X_i}{1-Z}, \qquad Y_i\mapsto \frac{Y_i}{1+Z}.
\]
This induces a morphism $\op{Q}_{2n}\to\Sigma_{\op{s}}^1\op{Q}_{2n-1}$, and the map $\op{Q}_{2n}\to{\op{B}}G$ classifying a $G$-torsor over $\op{Q}_{2n}$ corresponds, via adjunction 
\[
[\op{Q}_{2n},{\op{B}}G]_{\mathbb{A}^1} \cong [\Sigma_{\op{s}}^1\op{Q}_{2n-1},{\op{B}}G]_{\mathbb{A}^1} \cong [\op{Q}_{2n-1},\Omega_s{\op{B}}G]_{\mathbb{A}^1} \cong [\op{Q}_{2n-1},G]_{\mathbb{A}^1},
\]
to a clutching function $\op{Q}_{2n-1}\to G$. Concretely, we can start with a morphism $\op{Q}_{2n-1}\to G$, and take the $G$-torsor which is trivial on the principal open subschemes $\op{D}(Z\pm 1)$ and whose transition function on $\op{D}(Z^2-1)$ is given by the composition $\op{D}(Z^2-1)\to\op{Q}_{2n-1}\to G$, cf. \cite[Theorem 4.3.6]{AsokDoranFasel}. 

%note that the base point $v$ is not in the intersection. one possible base point to map to the standard base point of $\op{Q}_{2n-1}$ would be $(1,0,\dots,0,1,0,\dots,0)$. 

For the other direction, we have the following:

\begin{proposition}
\label{prop:clutching}
Let $G$ be an isotropic group, and let $\mathscr{E}$ be a rationally trivial $G$-torsor classified by $\op{Q}_{2n}\to {\op{B}}G$.  Assume that we have obtained trivializations over the open subschemes $\op{D}(Z\pm 1)$ and a corresponding transition function $\op{D}(Z^2-1)\to G$. The composition $\op{Q}_{2n-1}\to\op{D}(Z^2-1)\to G$ of the inclusion via $Z=0$ and the transition function is a clutching function for the $G$-torsor $\mathscr{E}$.
\end{proposition}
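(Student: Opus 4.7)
The plan is to establish the claim by identifying the formula $[\mathscr{E},\tau_\pm,\psi]\mapsto[\psi\circ\iota]$ as a two-sided inverse to the Asok--Doran--Fasel clutching bijection $\op{cl}\colon [\op{Q}_{2n-1},G]_{\mathbb{A}^1}\to [\op{Q}_{2n},{\op{B}}G]_{\mathbb{A}^1}$ recalled above, which sends $[\phi]$ to the torsor trivial on $\op{D}(Z\pm 1)$ with transition function $\phi\circ\pi$.

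The key computation I would carry out first is the identity $\pi\circ\iota=\op{id}_{\op{Q}_{2n-1}}$. Directly from the formulas, $\iota\colon(x_i,y_i)\mapsto(x_i,y_i,0)$, after which $\pi\colon(X_i,Y_i,Z)\mapsto(X_i/(1-Z),Y_i/(1+Z))$ yields $(x_i,y_i)$. With this in hand, I would trace through the composite one way: starting from $[\phi]\in[\op{Q}_{2n-1},G]_{\mathbb{A}^1}$, the image $\op{cl}([\phi])$ has transition $\phi\circ\pi$, and the proposed inverse then restricts this transition to $Z=0$ to produce $[(\phi\circ\pi)\circ\iota]=[\phi\circ(\pi\circ\iota)]=[\phi]$, recovering the starting class. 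This exhibits the formula as a left inverse to $\op{cl}$; since $\op{cl}$ is a bijection, it must then be the two-sided inverse, yielding the proposition.

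The main obstacle will be the well-definedness of the proposed inverse, namely independence of the $\mathbb{A}^1$-homotopy class $[\psi\circ\iota]$ from the choice of trivializations $\tau_\pm$. A change of trivializations alters $\psi$ to $g_-\psi g_+^{-1}$ for some $g_\pm\colon\op{D}(Z\pm 1)\to G$, which correspondingly alters $\psi\circ\iota$ by conjugation with the restrictions $g_\pm|_{Z=0}\colon\op{Q}_{2n-1}\to G$. My plan for this step is to argue that these restrictions are $\mathbb{A}^1$-null-homotopic, which would follow from $\mathbb{A}^1$-contractibility (or a suitable weaker property) of the open cells $\op{D}(Z\pm 1)$ as used in \cite{AsokDoranFasel}, so that any map from $\op{Q}_{2n-1}$ factoring through one of these cells is homotopic to a constant. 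The remaining action by global conjugation by constant elements of $G$ is already accounted for in the passage from pointed maps to isomorphism classes of torsors, so the class $[\psi\circ\iota]$ is indeed an invariant of $\mathscr{E}$.
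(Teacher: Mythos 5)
Your overall route is the same as the paper's: both arguments rest on the strict identity $\pi\circ\iota=\op{id}_{\op{Q}_{2n-1}}$ for the inclusion $\iota$ at $Z=0$ and the projection $\pi\colon\op{D}(Z^2-1)\to\op{Q}_{2n-1}$, combined with the bijection (representability plus \cite[Theorem 4.3.6]{AsokDoranFasel}) sending a clutching function $\phi$ to the torsor glued along $\phi\circ\pi$, and then exhibit ``restrict the transition function along $\iota$'' as a one-sided inverse to this bijection. That skeleton is fine and matches the paper, which likewise only uses that the composite $[\op{Q}_{2n-1},G]_{\mathbb{A}^1}\to\op{H}^1_{\op{Nis}}(\op{Q}_{2n},G)\to[\op{Q}_{2n-1},G]_{\mathbb{A}^1}$ is the identity together with bijectivity of the first map, so that only the trivializations produced by the clutching construction itself enter the argument.

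The genuine problem is the step you single out as the main obstacle. Your plan for independence of $[\psi\circ\iota]$ from the chosen trivialization is to argue that the restrictions $g_\pm|_{Z=0}$ are null-homotopic because the cells $\op{D}(Z\pm1)$ are $\mathbb{A}^1$-contractible; this claim is false, and no such statement is available in \cite{AsokDoranFasel}. Indeed $Z\pm1$ is an invertible, non-constant regular function on $\op{D}(Z\pm1)$, i.e.\ a non-constant morphism to the $\mathbb{A}^1$-rigid scheme $\mathbb{G}_{\op{m}}$, so $\op{D}(Z\pm1)$ cannot be $\mathbb{A}^1$-contractible; concretely, $(X,Y,Z)\mapsto(X/(1-Z),Y)$ identifies $\op{D}(Z-1)$ with the complement of a smooth affine quadric hypersurface in $\mathbb{A}^{2n}$, which is nothing like the contractible hemispheres of the topological $2n$-sphere (this is also why the clutching theorem in \cite{AsokDoranFasel} is not proved by simply contracting the two cells of this cover). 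The ``suitable weaker property'' you would actually need---that every morphism $\op{Q}_{2n-1}\to G$ factoring through $\iota\colon\op{Q}_{2n-1}\to\op{D}(Z\pm1)$ is naively $\mathbb{A}^1$-homotopic to a constant---is exactly the content that has to be supplied at this point, and it does not follow from anything you cite. So, as written, your well-definedness step fails; either prove such a null-homotopy statement for restrictions of elements of $G(\op{D}(Z\pm1))$ by a separate argument, or restructure the proof as in the paper so that the backwards assignment is only evaluated on the canonical trivializations coming from the clutching construction.
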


\begin{proof}
 The morphism $\op{Q}_{2n-1} \to \op{D}(Z^2-1) \to \op{Q}_{2n-1}$ which first includes the quadric $\op{Q}_{2n-1}$ via $Z=0$ and then applies the standard projection is the identity. By the representability theorem of \cite{gbundles2}, the isomorphism classes of rationally trivial $G$-torsors over $\op{Q}_{2n}$ are in natural bijection with the naive $\mathbb{A}^1$-homotopy classes of morphisms $\op{Q}_{2n-1}\to G$; this bijection is, by \cite[Theorem 4.3.6]{AsokDoranFasel}, induced by mapping a clutching function $\op{Q}_{2n-1}\to G$ to the $G$-torsor associated to the composition $\op{D}(Z^2-1)\to \op{Q}_{2n-1}\to G$. Composing further with the restriction of the clutching function along $\op{Q}_{2n-1}\to\op{D}(Z^2-1)$ induces a map
\[
[\op{Q}_{2n-1},G]_{\mathbb{A}^1}\to \op{H}^1_{\op{Nis}}(\op{Q}_{2n},G) \to [\op{Q}_{2n-1},G]_{\mathbb{A}^1}.
\]
The first map is the natural bijection between clutching functions and torsors, and the composition is the identity. In particular, if we have a trivialization of a $G$-torsor over the open subschemes $\op{D}(Z\pm 1)$ with the associated transition function $\op{D}(Z^2-1)\to G$, then the composition $\op{Q}_{2n-1}\to\op{D}(Z^2-1)\to G$ is a clutching function for the given $G$-torsor. 
\end{proof}

\begin{remark}
Via the natural bijections in the proof of Proposition~\ref{prop:clutching}, we also see that every rationally trivial $G$-torsor (for an isotropic group $G$) has a trivialization in the covering $\op{D}(Z\pm 1)$ of $\op{Q}_{2n}$, with a transition function which is pulled back along $\op{D}(Z^2-1)\to \op{Q}_{2n-1}$.
\end{remark}

\subsection{Trivialization and characteristic map} 

To identify the characteristic map $\op{Q}_{2n}\to {\op{B}}_{\op{Nis}}\op{SO}(2n)$, classifying the $\op{SO}(2n)$-torsor $\pi\colon \op{SO}(2n+1)\to\op{Q}_{2n}$, we proceed as in \cite[Section 23]{steenrod}. The first step is to write down explicit trivializations of the torsor $\pi$ over the two principal open subsets $\op{D}(1+Z)$ and $\op{D}(1-Z)$; this provides an explicit transition map $\op{D}(Z^2-1)\to\op{SO}(2n)$. As a second step, the clutching function is obtained as the composition $\op{Q}_{2n-1}\xrightarrow{\iota}\op{D}(Z^2-1)\to \op{SO}(2n)$. The characteristic map $\op{Q}_{2n}\to {\op{B}}_{\op{Nis}}\op{SO}(2n)$ then corresponds to the clutching map $\op{Q}_{2n-1}\to\op{SO}(2n)$ via the clutching result \cite[Theorem 4.3.6]{AsokDoranFasel}, cf. also the recollection as above. 

To obtain an explicit trivialization of the torsor $\pi$ over the principal open subset $\op{D}(1+Z)$, we follow the construction in \cite[23.3]{steenrod}: in classical topology, one fixes a base point $x_n\in \op{S}^n$ and sends a vector $x$ to the matrix which is the rotation from $x_n$ to $x$ on the plane spanned by $x_n$ and $x$, and is the identity on the orthogonal complement of that plane. 

We want to employ the same construction for the split orthogonal groups. For a coordinate-free formula for the above rotation, we can use the composition of two reflections at hyperplanes, one perpendicular to $x$ and the other perpendicular to the bisector of the angle between $x$ and $x_n$ for which we can take $x+x_n$ since both are unit vectors.\footnote{This is discussed in amd's answer to Math.StackExchange question 1909717 ``Rotation matrix in terms of dot products''.} The relevant coordinate-free expression for the rotation is then 
\[
a\mapsto a-\frac{B(x_n+x,a)}{1+B(x_n, x)}(x_n+x)+2B(x_n, a)x,
\]
which in the situation of \cite[23.3]{steenrod} reproduces exactly the matrix in loc.cit. 

Now we consider this formula in the odd-dimensionsal split setting, given by the bilinear form $B(u,v)=u^{\op{t}}Jv$. Recall that we denote vectors in $V=F^{2n+1}$ by $(t_1,\dots,t_n,t_0,t_{-n},\dots,t_{-1})$, with natural basis  given by $(\op{e}_1,\dots,\op{e}_n,\op{e}_0,\op{e}_{-n},\dots,\op{e}_{-1})$. The corresponding orthogonal basis (with vectors of length $1$ or $-1$) is $v_{\pm i}=\op{e}_i\pm \op{e}_{-i}$ and $v_0=\op{e}_0$. We choose $v_0$ as the relevant base point which should be rotated into a given vector $u=(t_1,\dots,t_n,t_0,t_{-n},\dots,t_{-1})$ on $\op{Q}_{2n}$. 
The rotation applied to $v_{\pm i}$ yields
\[
v_{\pm i}- \frac{B(u+v_0,v_{\pm i})}{1+B(u,v_0)}(u+v_0)+2B(v_0,v_{\pm i})u=v_{\pm i}-\frac{\left(t_{-1}\pm t_1\right)}{1+t_0}(u+v_0),
\]
and the rotation applied to $v_0$ yields
\[
v_0-\frac{B(u+v_0,v_0)}{1+B(u,v_0)}(u+v_0)+2B(v_0,v_0)u=u.
\]
Now we express $u=(t_1,\dots,t_n,t_0,t_{-n},\dots,t_{-1})$ in the above orthogonal basis:
\[
u=t_0v_0+\sum_{i=1}^n\left(\left(\frac{t_i+t_{-i}}{2}\right)v_i+ \left(\frac{t_i-t_{-i}}{2}\right)v_{-i}\right) 
\]
In particular, the entries of the representing matrix for the rotation above are given as follows: the $v_{\pm j}$-coordinate of the image of $v_{\pm i}$ is given as
\[
\delta_{ij}-\frac{\left(t_{-i}\pm_i t_i\right)\left(t_j\pm_j t_{-j}\right)}{2+2t_0},
\]
the $v_0$-coordinate of the image of $v_{\pm j}$ is given by $(t_{-j}\pm t_j)$, and the coordinates of the image of $v_0=u$ are as above.\footnote{The representing matrix for the relevant rotation has a form very similar to the rotation matrices appearing in \cite[Section 23.3]{steenrod}, replacing the coordinates $t_i$ by $t_i\pm t_{-i}$ and the denominator $1+t_n$ by $2+2t_0$.} The fact that the matrix is orthogonal and maps $v_0$ to $u$ (provided that $1+t_0$ is invertible) follows from the construction. We denote by $\rho\colon \op{D}(1+Z)\to \op{SO}(2n+1)$ the corresponding  map, taking $u$ to the rotation described (mapping $v_0$ to $u$) above. 

As a special case, for $u=v_1$, we get the matrix
\[
(\phi(v_1))_{ij}=\left\{\begin{array}{rl}
1 & i=j \textrm{ or } i=0,j=1\\
-1 & i=1,j=0\\
0 &\textrm{otherwise}
\end{array}\right.
\]
and $\lambda=\phi(v_1)^2$ is the diagonal matrix whose entries are all 1 except $-1$ for the entries corresponding to $v_0$ and $v_1$, representing the $180^\circ$ degree rotation in the plane spanned by $v_0$ and $v_1$. 

Then we can directly follow the definition in \cite[23.3]{steenrod}: the trivializing maps  are given by
\begin{eqnarray*}
\phi_1\colon \op{D}(1+Z)\times \op{SO}(2n)&\to& \op{SO}(2n+1)\colon  (u,r)\mapsto \rho(u)\cdot r\\
\phi_2\colon \op{D}(1-Z)\times \op{SO}(2n) & \to & \op{SO}(2n+1)\colon  (u,r)\mapsto \lambda\phi(\lambda(u))\cdot r
\end{eqnarray*}
The corresponding partial inverses $\op{SO}(2n+1)\dashrightarrow\op{D}(1\pm Z)\times\op{SO}(2n)$ can be obtained as follows: the image in the sphere is simply given by the projection $\pi\colon \op{SO}(2n+1)\to\op{Q}_{2n}\colon A\mapsto Av_0$ (and obviously this is only defined whenever the image lands in the respective coordinate neighbourhood $\op{D}(1\pm Z)$). The appropriate rotation in $\op{SO}(2n)$ can be recovered via
\[
p_1(r)=\phi(\pi(r))^{-1}r \quad \textrm{ and } \quad p_2(r)=\phi(\lambda \pi(r))^{-1}\lambda r.
\]

On the intersection of the coordinate neighbourhoods $\op{D}(Z^2-1)$, the transition function is given as 
\[
\tau\colon \op{D}(Z^2-1)\to \op{SO}(2n+1)\colon x\mapsto \tau(x)=\phi(x)^{-1}\cdot\lambda\phi(\lambda(x)).
\]
In particular, since $\phi(v_1)$ and $\lambda\phi(\lambda(v_1))$ are both the rotation from $v_0$ to $v_1$, we find that $\tau(v_1)$ is the identity, making $\tau$ a pointed morphism. 

As in \cite[§23]{steenrod}, it can be checked that the transition function maps a point $u\in \op{Q}_{2n-1}=\op{V}(Z)\subset \op{D}(Z^2-1)$ to the composition of a reflection for the vector $v_1$ followed by a reflection for the vector $u$. Since $v_1$ is a unit vector, the reflection for $v_1$ is given by $x\mapsto x-B(v_1,x)v_1$. For the orthogonal basis $v_{\pm i}$, every vector except $v_1$ is fixed, and $v_1$ changes sign. Then the reflection for $u$ maps $v_{\pm i}\mapsto v_{\pm i}-2B(u,v_{\pm i})u$ (again using that $u$ is a unit vector). If we write, as above, 
\[
u=\sum_{i=1}^n\left(\left(\frac{t_i+t_{-i}}{2}\right)v_i+ \left(\frac{t_i-t_{-i}}{2}\right)v_{-i}\right) 
\]
we get $v_{1}\mapsto -v_{1}+2(t_1+ t_{-1})u$ and $v_{\pm i}\mapsto v_{\pm i}-2(t_i\pm t_{-i})u$ for everything else. Consequently, the matrix for the transition function, expressed in the basis $(v_1,\dots,v_n,v_{-n},\dots,v_{-1})$ is given by 
\[
\tau_{ij}=(\delta_{ij}-2(t_i\pm_i t_{-i})(t_j\pm_j t_{-j}))_{ij}\cdot\op{diag}(-1,1,\dots,1).
\]

We can now compose this map $\op{Q}_{2n-1}\to \op{SO}(2n)$ with the natural projection $\op{SO}(2n)\to\op{Q}_{2n-1}\colon A\mapsto A\cdot v_1$. The resulting morphism $\pi\circ \tau\colon \op{Q}_{2n-1}\to\op{Q}_{2n-1}$ is given by
\[
 u=(t_1,\dots,t_n,t_{-n},\dots,t_{-1})\mapsto -v_1+(t_1+t_{-1})\sum_{i=1}^n\left((t_i+t_{-i})v_i+(t_i-t_{-i})v_{-i}\right)
\]
which can equivalently be written as
\[
((t_1+t_{-1})t_1-1,\dots,(t_1+t_{-1})t_n,(t_1+t_{-1})t_{-n},\dots,(t_1+t_{-1})t_{-1}-1). 
\]
This is now the morphism $\op{Q}_{2n-1}\to\op{Q}_{2n-1}$ which induces the connecting map in the long exact $\mathbb{A}^1$-homotopy sequence.

\subsection{A degree computation}
\label{sec:degree}

Now we need to compute the degree of the morphism $\op{Q}_{2n-1}\to\op{Q}_{2n-1}$ determined above. We will see later that the attaching map for the long exact $\mathbb{A}^1$-homotopy sequence for $\op{Q}_{2n-1}\to\op{V}_{2,2n+1}\to\op{Q}_{2n}$ is then obtained as the composition $\op{Q}_{2n-1}\to\op{SO}(2n)\to\op{Q}_{2n-1}$ of the clutching map $\op{Q}_{2n-1}\to\op{SO}(2n)$ with the natural projection $\op{SO}(2n)\to\op{Q}_{2n-1}$. Hence the degree of the above morphism $\op{Q}_{2n-1}\to\op{Q}_{2n-1}$ completely describes the attaching map in the long exact homotopy sequence.

By \cite[Proposition 4.1, Corollary 4.5]{AsokFaselSpheres}, the natural map
\[
[\mathbb{A}^n\setminus\{0\},\mathbb{A}^n\setminus\{0\}]_{\mathbb{A}^1}\to \op{H}^{n-1}(\mathbb{A}^n\setminus\{0\},\mathbf{K}^{\op{MW}}_n)
\]
induced by the identification $\tau_{\leq n-1}\mathbb{A}^n\setminus\{0\}\cong \op{K}(\mathbf{K}^{\op{MW}}_n,n-1)$ is a bijection. Via the natural $\mathbb{A}^1$-equivalence $\op{Q}_{2n-1}\to\mathbb{A}^n\setminus\{0\}$, we also get a natural bijection $[\op{Q}_{2n-1},\op{Q}_{2n-1}]_{\mathbb{A}^1}\cong \op{H}^{n-1}(\op{Q}_{2n-1},\mathbf{K}^{\op{MW}}_n)$. 

Therefore, the degree of a morphism $\op{Q}_{2n-1}\to\op{Q}_{2n-1}$ can be determined via cohomology. First, a generator for $\op{H}^{n-1}(\op{Q}_{2n-1},\mathbf{K}^{\op{MW}}_n)\cong \op{GW}(F)$ is given by the orientation class, cf. \cite[Example 4.4]{AsokFaselSpheres}: writing $\op{Q}_{2n-1}=\op{V}(\sum_{i=1}^nX_iX_{-i}-1)$, the vanishing locus of the $X_2,\dots,X_n$ defines a smooth scheme $Y\subset \op{Q}_{2n-1}$ of codimension $n-1$. Then $X_1$ is an invertible function on $Y$, hence gives rise to $[X_1]\in \mathbf{K}^{\op{MW}}_1(F(Y))$. The Koszul complex for the regular sequence $X_2,\dots,X_n$ gives a generator of the $F(Y)$-vector space $\omega_Y=\op{Ext}^{n-1}_{\mathscr{O}_X,Y}(F(Y),\mathscr{O}_{X,Y})$ and an isomorphism $\mathbf{K}^{\op{MW}}_1(F(Y))\cong\mathbf{K}^{\op{MW}}_1(F(Y),\omega_Y)$. Then the class $[X_1]\in\mathbf{K}^{\op{MW}}_1(F(Y),\omega_Y)$ is a cocycle in $\op{H}^{n-1}(\op{Q}_{2n-1},\mathbf{K}^{\op{MW}}_n)$, which is the orientation class $\xi$. With this definition, the degree of the morphism  $f\colon \op{Q}_{2n-1}\to\op{Q}_{2n-1}$ can now be determined via $\deg f\cdot\xi=f^\ast(x)$.

\begin{proposition}
\label{prop:degree}
For the morphism $\pi\circ\tau\colon \op{Q}_{2n-1}\to\op{Q}_{2n-1}$ given by mapping $(t_1,\dots,t_n,t_{-n},\dots,t_{-1})$ to  
\[
((t_1+t_{-1})t_1-1, \dots, (t_1+t_{-1})t_n, (t_1+t_{-1})t_{-n}, \dots,(t_1+t_{-1})t_{-1}-1),
\]
we have 
\[
\deg(\pi\circ \tau)=\left\{\begin{array}{ll}
2&n \equiv 0\bmod 2\\
\mathbb{H}&n\equiv 1\bmod 2
\end{array}\right.
\]
\end{proposition}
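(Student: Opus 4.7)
Following the framework just established, the plan is to compute $f^\ast(\xi)$ for $f := \pi\circ\tau$, where $\xi = [X_1]|_Y$ is the orientation class. Since $f^\ast X_i = (t_1+t_{-1})t_i$ for $i\geq 2$, the naive pullback $f^{-1}(Y)$ acquires a spurious codimension-$1$ component along $t_1+t_{-1} = 0$, so my first step is to replace $Y$ by a cycle transverse to $f$. A convenient choice is $Y_\epsilon := V(X_2 - \epsilon X_1, X_3, \ldots, X_n)\cap \op{Q}_{2n-1}$ for $\epsilon\in F^\times$, which represents the same class $\xi$ (via the $\mathbb{A}^1_s$-family $V(X_2 - sX_1, X_3, \ldots, X_n)\cap \op{Q}_{2n-1}$) and for which $f^{-1}(Y_\epsilon)$ is forced into $\{t_1+t_{-1}\neq 0\}$ and becomes smooth of pure codimension $n-1$.

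To identify the class of the pulled-back Koszul cocycle $[f^\ast X_1] = [(t_1+t_{-1})t_1 - 1]$ in $\op{GW}(F)$, I would transport the computation through the $\mathbb{A}^1$-equivalence $p\colon \op{Q}_{2n-1}\to \mathbb{A}^n\setminus\{0\}$, $(t_i, t_{-i})\mapsto (t_1,\ldots,t_n)$, under which $\xi$ matches the standard orientation class on $\mathbb{A}^n\setminus\{0\}$ (both defined from $[X_1]$ on $V(X_2,\ldots,X_n)$ with the Koszul trivialization $dX_2\wedge\cdots\wedge dX_n$). On the principal open $D(t_1)$, with section $s(t_1,\ldots,t_n) = (t_1,\ldots,t_n, 0,\ldots,0, 1/t_1)$ of $p$, a direct substitution (using $t_1 + t_{-1} = (t_1^2+1)/t_1$ along $s$) yields
\[
\bar{f}(t_1,\ldots,t_n) := p\circ f\circ s = \bigl(t_1^2,\; (t_1^2+1)t_2/t_1,\; \ldots,\; (t_1^2+1)t_n/t_1\bigr).
\]
A short calculation shows that the preimages of $(1,0,\ldots,0)$ under $\bar{f}$ are exactly $(\pm 1,0,\ldots,0)$, at which the Jacobian matrix of $\bar{f}$ is diagonal with all entries $\pm 2$, giving Jacobian determinant $(\pm 2)^n$. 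The Kass--Wickelgren local degree formula then yields
\[
\deg(\pi\circ\tau) = \langle 2^n\rangle + \langle (-2)^n\rangle \in \op{GW}(F).
\]

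A final simplification using $\langle a^2 x\rangle = \langle x\rangle$ gives the claim: for $n$ even, $2^n$ and $(-2)^n = 2^n$ are squares and the sum equals $\langle 1\rangle + \langle 1\rangle = 2$; for $n$ odd, $\langle 2^n\rangle = \langle 2\rangle$ and $\langle (-2)^n\rangle = \langle -2\rangle$, and $\langle 2, -2\rangle = \langle 2\rangle\cdot\mathbb{H} = \mathbb{H}$. The main subtle point to pin down is justifying that the local-degree computation using the partial section $s$ on $D(t_1)$ correctly captures the degree of $f$ as a self-map of $\op{Q}_{2n-1}$: this works because all preimages of the chosen regular value lie in $D(t_1)$ and because the natural orientations on $\op{Q}_{2n-1}$ and $\mathbb{A}^n\setminus\{0\}$ are compatible under $p$, so the Kass--Wickelgren formula reads off $\bar{f}^\ast(\bar\xi)$ from the Jacobian data at the preimages, and this pullback equals $f^\ast(\xi)$ by naturality.
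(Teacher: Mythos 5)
Your final answer agrees with the paper's (the degree is $1+\langle(-1)^n\rangle$, i.e.\ $2$ for $n$ even and $\mathbb{H}$ for $n$ odd), and your Jacobian arithmetic for $\bar f=p\circ f\circ s$ is correct: the preimages of $(1,0,\dots,0)$ in $\op{D}(t_1)$ are $(\pm1,0,\dots,0)$ with diagonal Jacobians of determinant $(\pm2)^n$, and $\langle 2^n\rangle+\langle(-2)^n\rangle$ simplifies as you say. But there is a genuine gap in the step you yourself flag as ``the main subtle point'', and the justification you offer for it does not hold. The map whose degree you must compute is $p\circ f\colon\op{Q}_{2n-1}\to\mathbb{A}^n\setminus\{0\}$ (equivalently $f$ itself), and its source has dimension $2n-1>n$: the fiber of $p\circ f$ over your chosen ``regular value'' $(1,0,\dots,0)$ is cut out by $(t_1+t_{-1})t_1=2$, $t_i=0$ for $i\geq 2$ and the quadric equation, hence consists of two copies of $\mathbb{A}^{n-1}$ (the coordinates $t_{-2},\dots,t_{-n}$ are free), not of two points. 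So it is false that ``all preimages of the chosen regular value lie in $D(t_1)$'' in the sense needed: only after precomposing with the partial section $s$ do the fibers become finite, and $s$ is defined only on $\op{D}(t_1)\simeq_{\mathbb{A}^1}\mathbb{G}_{\op{m}}$, which is not $\mathbb{A}^1$-equivalent to $\mathbb{A}^n\setminus\{0\}$. Consequently $\bar f$ is not an endomorphism of a motivic sphere, its naive homotopy class carries no information (every map $\mathbb{G}_{\op{m}}\to\mathbb{A}^n\setminus\{0\}$ is null for $n\geq 2$), and a sum-of-local-degrees formula in the style of Kass--Wickelgren does not apply to it without an additional argument identifying that pointwise data with $\deg(f)$; ``naturality'' does not supply this. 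Your first paragraph (replacing $Y$ by the transverse cycle $Y_\epsilon$) starts a legitimate cohomological computation of $f^\ast\xi$, but it is never connected to the second paragraph, which silently switches to a codimension-$n$, point-counting calculation.

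For comparison, the paper stays entirely within the cohomological framework: it computes $f^\ast\xi$ in $\op{H}^{n-1}(\op{Q}_{2n-1},\mathbf{K}^{\op{MW}}_n)$ by factoring $f$ through its graph embedding into $\op{Q}_{2n-1}\times\op{Q}_{2n-1}$ followed by $\op{pr}_2$, using the deformation-to-the-normal-cone pullback for the regular embedding; the spurious component $\op{V}(X_1+X_{-1})$ of the naive preimage is discarded for codimension reasons, the pulled-back cocycle is $[(X_1+X_{-1})X_1-1]=[X_1^2]$ on $\op{V}(X_2,\dots,X_n)$, and the parity-dependent identities $[X_1^2]=2[X_1]$ resp.\ $[X_1^2]=\mathbb{H}[X_1]$ (with the twist by $\omega_Y$ taken into account, following Asok--Fasel) give the result. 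If you want to salvage your route, you would need either to complete the $Y_\epsilon$-computation at the level of Rost--Schmid cocycles (which essentially reproduces the paper's argument), or to genuinely descend $f$ along the $\mathbb{A}^1$-equivalence $p$ to an endomorphism of $\mathbb{A}^n\setminus\{0\}$ (or of $\mathbb{A}^n/(\mathbb{A}^n\setminus\{0\})$) to which a local-degree formula legitimately applies; neither step is done.
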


\begin{proof}
We compute the pullback of the orientation class $\xi$ along $\pi\circ\tau$. Note that the above map is not flat. This can be seen e.g. by the fact that the preimage of the subscheme $\op{V}(X_2,\dots,X_n)$ under $\pi\circ\tau$ is 
\[
\op{V}((X_1+X_{-1})X_2,\dots,(X_1+X_{-1})X_n)= \op{V}(X_1+X_{-1})\cup\op{V}(X_2,\dots,X_n)
\]
and thus has two components of different (co)dimension. To compute the pullback in terms of Gersten complexes, we proceed as in \cite{rost}: factor the morphism $\pi\circ\tau$ as
\[
\op{Q}_{2n-1}\xrightarrow{(\op{id},\pi\circ\tau)}\op{Q}_{2n-1}\times\op{Q}_{2n-1} \xrightarrow{\op{pr}_2}\op{Q}_{2n-1}
\]
of a regular embedding (whose normal bundle is the pullback $(\pi\circ\tau)^\ast\mathscr{T}\op{Q}_{2n-1}$ of the tangent bundle of $\op{Q}_{2n-1}$) and a smooth morphism. 

The pullback of the class on $\op{Q}_{2n-1}$ along $\op{pr}_2$ can be described easily: it is given by the class of $[X_1]$ on the subscheme $\op{V}(X_2,\dots,X_n)$, where here the $X_i$ are viewed as functions on $\op{Q}_{2n-1}\times\op{Q}_{2n-1}$ via the projection $\op{pr}_2$. 

For the pullback along the regular embedding $f:Y\hookrightarrow X$, we can use the description given in \cite{rost} or \cite{fasel:chowwittring}, cf. \cite{AsokFaselEuler} for an identification with pullbacks in sheaf cohomology. This definition uses the deformation to the normal cone: in the deformation space $D(X,Y)=\op{Bl}_{Y\times \{0\}}X\times\mathbb{A}^1\setminus \op{Bl}_YX$, we have the normal bundle $\mathscr{N}_YX\hookrightarrow D(X,Y)$ embedded as divisor, with open complement $X\times\mathbb{A}^1\setminus\{0\}$. The pullback $\op{H}^i(X,\mathbf{K}^{\op{MW}}_j)\to \op{H}^i(Y,\mathbf{K}^{\op{MW}}_j)$ is then defined by composing the flat pullback along $X\times\mathbb{A}^1\setminus\{0\}$, multiplication by the uniformizer of $X\times\{0\}\subseteq X\times\mathbb{A}^1$, followed by the boundary map to the divisor $\mathscr{N}_YX$ and finally the homotopy invariance map to $Y$. Tracing through the definition in the specific case of the graph embedding $\op{Q}_{2n-1}\hookrightarrow\op{Q}_{2n-1}\times\op{Q}_{2n-1}$, we see that the component $\op{V}(X_1+X_{-1})$ doesn't contribute to the pullback cycle (essentially for codimension reasons). The underlying scheme of the pullback cycle is the other component of the preimage, $\op{V}(X_2,\dots,X_n)$, and the pullback of the class $[X_1]\in\mathbf{K}^{\op{MW}}_1(F(Y),\omega_Y)$ is given by 
\[
[(X_1+X_{-1})X_1-1]\in \mathbf{K}^{\op{MW}}_1(F(Y),\omega_Y).
\]
But on $F(Y)$, we have $X_1X_{-1}=1$, hence $[(X_1+X_{-1})X_1-1]=[X_1^2]$. The degree $\deg(\pi\circ\tau)$ is therefore determined by the equality $[X_1^2]=\deg(\pi\circ\tau)[X_1]$ in $\mathbf{K}^{\op{MW}}_1(F(Y))$. 

If $n$ is even, we have $[X_1^2]=2[X_1]$, as discussed in the proof of \cite[Theorem 4.9]{AsokFaselSpheres}. If $n$ is odd, we have $[X_1^2]=\mathbb{H}[X_1]$, as discussed in the proof of \cite[Theorem 4.11]{AsokFaselSpheres}. This proves the claim.
\end{proof}

\begin{remark}
Note that this fits the classical pattern. From \cite[Lemma 1.7]{levine}, we knnow that the reduced motivic Euler characteristic of the sphere $\op{S}^{p,q}=(\op{S}^1_s)^{\wedge p-q}\wedge (\mathbb{G}_{\op{m}})^{\wedge q}$ is $\chi(\op{S}^{p,q})=(-1)^p\langle-1\rangle^q\in \op{GW}(F)$. This is the inverse of the $\mathbb{A}^1$-Brouwer degree of the antipodal map $\op{S}^{p,q}\to\op{S}^{p,q}$. In the particular case $\op{Q}_{2n-1}\simeq \op{S}^{2n-1,n}$, the Euler characteristic is $-\langle-1\rangle ^n$ and the degree of the antipodal map is $\langle-1\rangle^n$. The degree of the map $\op{Q}_{2n-1}\to\op{Q}_{2n-1}$ considered above is $1+\langle-1\rangle^n$, i.e. 1 plus the degree of the antipodal map, recovering exactly the classical result in \cite[Section 23]{steenrod}. 
\end{remark}

\begin{remark}
For $F=\mathbb{C}$, the complex realization, the motivic degree of the map in $\op{GW}(\mathbb{C})\cong\mathbb{Z}$ is always 2, independent of $n$. This fits exactly with the classical situation of the torsor $\op{SO}(2n+1)\to \op{S}^{2n}$, where the degree of the map $\op{S}^{2n-1}\to\op{S}^{2n-1}$ will always be $2$ because the sphere is always odd-dimensional.\footnote{This is one of the few cases in this paper where $\op{SO}(2n+1)$ denotes the compact form, or alternatively the split group $\op{SO}(2n+1,\mathbb{C})$.}

For $F=\mathbb{R}$, the $\mathbb{A}^1$-Brouwer degree of the map in $\op{GW}(\mathbb{R})\cong\mathbb{Z}$ is a rank 2 form whose signature is $2$ or $0$, depending if $n$ is even or odd. This fits exactly with the topological situation: the split form of $\op{O}(2n+1)$ has maximal compact subgroup $\op{O}(n)\times\op{O}(n+1)$ and the relevant torsor $\op{O}(n)$-torsor is $\op{O}(n+1)\to\op{S}^n$. The degree of the attaching map $\op{S}^n\to\op{S}^n$ is $2$ or $0$, depending if $n$ is even or odd, exactly as in the motivic situation.
\end{remark}

\section{On \texorpdfstring{$\mathbb{A}^1$}{A1}-homotopy groups of orthogonal Stiefel varieties}
\label{sec:stiefel}

Now we have enough background on the geometry of orthogonal Stiefel varieties $\op{V}_{2,n}$ to determine their $\mathbb{A}^1$-connectivity and the first non-vanishing $\mathbb{A}^1$-homotopy sheaves. The Nisnevich cohomology groups with coefficients in the first non-vanishing $\mathbb{A}^1$-homotopy sheaf will be the natural home for obstructions to splitting off hyperbolic planes. 
%\subsection{$\mathbb{A}^1$-homotopy sheaves of Stiefel varieties}

\begin{proposition}
\label{prop:stiefel1}
The Stiefel variety $\op{V}_{2,2d+1}$ is $\mathbb{A}^1$-$(d-2)$-connected. The first non-vanishing $\mathbb{A}^1$-homotopy sheaf is
\[
\bm{\pi}^{\mathbb{A}^1}_{d-1}(\op{V}_{2,2d+1})\cong \mathbf{K}^{\op{MW}}_d/(1+\langle(-1)^d\rangle)\cong \left\{\begin{array}{ll}
\mathbf{K}^{\op{MW}}_d/2 & d\equiv 0\mod 2\\
\mathbf{K}^{\op{MW}}_d/[\mathbb{H}]\cong\mathbf{I}^d & d\equiv 1\mod 2\end{array}\right.
\]
\end{proposition}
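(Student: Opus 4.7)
The plan is to exploit the fiber sequence $\op{Q}_{2d-1}\to\op{V}_{2,2d+1}\to\op{Q}_{2d}$ from \prettyref{lem:quadfib}, together with the known $\mathbb{A}^1$-homotopy of the split smooth affine quadrics: $\op{Q}_{2d-1}\simeq \mathbb{A}^d\setminus\{0\}$ is $\mathbb{A}^1$-$(d-2)$-connected with $\bm{\pi}^{\mathbb{A}^1}_{d-1}(\op{Q}_{2d-1})\cong \mathbf{K}^{\op{MW}}_d$, while $\op{Q}_{2d}\simeq \Sigma_{\op{s}}^1\op{Q}_{2d-1}$ (via the clutching covering $\op{D}(Z\pm 1)$ recalled in Section~\ref{sec:clutching}) is $\mathbb{A}^1$-$(d-1)$-connected with $\bm{\pi}^{\mathbb{A}^1}_d(\op{Q}_{2d})\cong \mathbf{K}^{\op{MW}}_d$. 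The $(d-2)$-connectivity of $\op{V}_{2,2d+1}$ is then immediate from the long exact sequence, since both neighbouring terms $\bm{\pi}^{\mathbb{A}^1}_i(\op{Q}_{2d-1})$ and $\bm{\pi}^{\mathbb{A}^1}_i(\op{Q}_{2d})$ vanish for $i\leq d-2$.

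For the first potentially non-vanishing homotopy sheaf, the relevant portion of the long exact sequence reads
\[
\bm{\pi}^{\mathbb{A}^1}_d(\op{Q}_{2d})\xrightarrow{\partial} \bm{\pi}^{\mathbb{A}^1}_{d-1}(\op{Q}_{2d-1}) \to \bm{\pi}^{\mathbb{A}^1}_{d-1}(\op{V}_{2,2d+1}) \to \bm{\pi}^{\mathbb{A}^1}_{d-1}(\op{Q}_{2d})=0,
\]
so that $\bm{\pi}^{\mathbb{A}^1}_{d-1}(\op{V}_{2,2d+1})\cong \op{coker}(\partial)$. The problem thus reduces to identifying $\partial\colon \mathbf{K}^{\op{MW}}_d\to\mathbf{K}^{\op{MW}}_d$, which, under the two orientation-class identifications with $\mathbf{K}^{\op{MW}}_d$, must be multiplication by some element of $\op{GW}(F)=\mathbf{K}^{\op{MW}}_0(F)$.

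To pin down that element, I would use the associated-bundle description from the proof of \prettyref{lem:quadfib}: the fibration $\op{V}_{2,2d+1}\to\op{Q}_{2d}$ is the $\op{Q}_{2d-1}$-bundle associated to the $\op{SO}(2d)$-torsor $\op{SO}(2d+1)\to\op{Q}_{2d}$ under the transitive action of $\op{SO}(2d)$ on $\op{Q}_{2d-1}$. Consequently its characteristic map, and hence the connecting homomorphism $\partial$, is obtained by composing the clutching function $\tau\colon\op{Q}_{2d-1}\to\op{SO}(2d)$ of that torsor (constructed explicitly in Section~\ref{sec:clutching}) with the orbit map $\pi\colon\op{SO}(2d)\to\op{Q}_{2d-1}$ at the base point $v_1$. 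The composite $\pi\circ\tau\colon\op{Q}_{2d-1}\to\op{Q}_{2d-1}$ is precisely the one whose $\mathbb{A}^1$-Brouwer degree is computed in \prettyref{prop:degree} to be $1+\langle(-1)^d\rangle\in \op{GW}(F)$. Therefore $\partial$ is multiplication by $1+\langle(-1)^d\rangle$, yielding the desired presentation
\[
\bm{\pi}^{\mathbb{A}^1}_{d-1}(\op{V}_{2,2d+1})\cong \mathbf{K}^{\op{MW}}_d/(1+\langle(-1)^d\rangle).
\]

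The parity-dependent reformulations are then routine: for $d$ even one has $\langle(-1)^d\rangle=1$, giving $\mathbf{K}^{\op{MW}}_d/2$; for $d$ odd one has $1+\langle-1\rangle=h$ (the hyperbolic element), and $\mathbf{K}^{\op{MW}}_d/h\cong \mathbf{I}^d$ by Morel's standard presentation of Milnor--Witt $K$-theory. The key technical step — and the main obstacle — is the identification of $\partial$ with the degree of $\pi\circ\tau$: it requires carefully matching the simplicial-suspension identification $\op{Q}_{2d}\simeq \Sigma_{\op{s}}^1\op{Q}_{2d-1}$ coming from the clutching covering with the $\mathbb{A}^1$-fiber sequence produced from the associated-bundle construction, and verifying that in this set-up the boundary in the long exact homotopy sequence really equals the adjoint of the characteristic map evaluated at the base point of the fibre. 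Everything else in the argument — the homotopy sheaves of quadrics, the presentation $\mathbf{K}^{\op{MW}}_d/h\cong\mathbf{I}^d$, and the degree computation of $\pi\circ\tau$ — is either standard or has been done in the preceding section.
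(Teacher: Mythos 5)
Your proposal is correct and follows essentially the same route as the paper: long exact sequence of the quadric fibration from Lemma~\ref{lem:quadfib}, identification of the connecting map as multiplication by an element of $\op{GW}(F)$, and computation of that element as the degree $1+\langle(-1)^d\rangle$ of $\pi\circ\tau$ via the clutching construction and Proposition~\ref{prop:degree}. The ``main obstacle'' you flag is handled in the paper exactly as you suggest, by comparing the connecting map with that of the fiber sequence $\op{Q}_{2d-1}\to{\op{B}}_{\op{Nis}}\op{SO}(2d-1)\to{\op{B}}_{\op{Nis}}\op{SO}(2d)$ applied to the classifying map of $\op{SO}(2d+1)\to\op{Q}_{2d}$, using the pullback diagram in Lemma~\ref{lem:quadfib} and Proposition~\ref{prop:clutching}.
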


\begin{proof}
The result follows from the long exact sequence in $\mathbb{A}^1$-homotopy associated to the fibration 
\[
\op{Q}_{2d-1}\to \op{V}_{2,2d+1}\to \op{Q}_{2d}
\]
from Lemma~\ref{lem:quadfib}. The relevant portion of the long exact $\mathbb{A}^1$-homotopy sequence is the following
\[
\bm{\pi}_{d}^{\mathbb{A}^1}(\op{Q}_{2d})\stackrel{\partial}{\longrightarrow} \bm{\pi}_{d-1}^{\mathbb{A}^1}(\op{Q}_{2d-1})\to\bm{\pi}_{d-1}^{\mathbb{A}^1}(\op{V}_{2,2d+1})\to \bm{\pi}_{d-1}^{\mathbb{A}^1}(\op{Q}_{2d}).
\]
By Morel's computations \cite{MField}, $\op{Q}_{2d}$ is $\mathbb{A}^1$-$(d-1)$-connected and $\op{Q}_{2d-1}$ is $\mathbb{A}^1$-$(d-2)$-connected. In particular, all $\mathbb{A}^1$-homotopy sheaves to the right of $\bm{\pi}_{d-1}^{\mathbb{A}^1}(\op{V}_{2,2d+1})$ will vanish. Moreover, Morel's computations \cite{MField} imply that $\bm{\pi}^{\mathbb{A}^1}_d(\op{Q}_{2d})\cong\mathbf{K}^{\op{MW}}_d$ and $\bm{\pi}^{\mathbb{A}^1}_d(\op{Q}_{2d-1})\cong\mathbf{K}^{\op{MW}}_d$. In particular, we can rewrite the above exact sequence  as
\[
\bm{\pi}_{d-1}^{\mathbb{A}^1}(\op{V}_{2,2d+1})\cong\op{coker}\left( \mathbf{K}^{\op{MW}}_d \stackrel{\partial}{\longrightarrow} \mathbf{K}^{\op{MW}}_d\right).
\]

It remains to identify the map $\partial$. In the category of strictly $\mathbb{A}^1$-invariant sheaves of abelian groups, we have an identification
\[
\op{Hom}(\mathbf{K}^{\op{MW}}_d,\mathbf{K}^{\op{MW}}_d)\cong\op{K}^{\op{MW}}_0(F), 
\]
so that $\partial$ is given by multiplication by some element $\partial\in\op{GW}(F)$. Moreover, since $[\op{Q}_{2d},\op{Q}_{2d}]_{\mathbb{A}^1}\cong [\op{Q}_{2d-1},\op{Q}_{2d-1}]_{\mathbb{A}^1}\cong\op{GW}(F)$, we can identify $\partial\in \op{GW}(F)$ by tracing through the definition of the connecting map for the special case $\op{id}_{\op{Q}_{2d}}\in \pi^{\mathbb{A}^1}_d(\op{Q}_{2d})(\op{Q}_{2d})$. The result will be a map $\op{Q}_{2d-1}\to\op{Q}_{2d-1}$ which represents $\partial\in\op{GW}(F)$. By Lemma~\ref{lem:quadfib}, the connecting map for the fiber sequence $\op{Q}_{2d-1}\to \op{V}_{2,2d+1}\to\op{Q}_{2d}$ can be identified with the connecting map for the fiber sequence $\op{Q}_{2d-1}\to {\op{B}}_{\op{Nis}}\op{SO}(2d-1)\to {\op{B}}_{\op{Nis}}\op{SO}(2d)$, applied to the map $\op{Q}_{2d}\to{\op{B}}_{\op{Nis}}\op{SO}(2d)$ which classifies the $\op{SO}(2d)$-torsor $\op{SO}(2d+1)\to\op{SO}(2d+1)/\op{SO}(2d)\cong \op{Q}_{2d}$. 

To identify the connecting map for the fiber sequence $\op{Q}_{2d-1}\to{\op{B}}_{\op{Nis}}\op{SO}(2d-1)\to {\op{B}}_{\op{Nis}}\op{SO}(2d)$, recall that the classifying map $\op{Q}_{2d}\to{\op{B}}_{\op{Nis}}\op{SO}(2d)$ corresponds, via the clutching construction, to a morphism $\op{Q}_{2d-1}\to\op{SO}(2d)$. Extending the fiber sequence one step to the left, we get a fiber sequence $\op{SO}(2d)\to \op{Q}_{2d-1}\to{\op{B}}_{\op{Nis}}\op{SO}(2d-1)$. Now the connecting map takes $\op{Q}_{2d}\to{\op{B}}_{\op{Nis}}\op{SO}(2d)$ to the composition
\[
\op{Q}_{2d-1}\to\op{SO}(2d)\to \op{Q}_{2d-1},
\]
where the first map is the clutching map and the second map is the natural projection. By the previous discussion, the element $\partial\in\op{GW}(F)$ is the degree of this composition. The claim then follows from Proposition~\ref{prop:degree}. 

It remains to identify the cohomology of $\mathbf{K}^{\op{MW}}_d/[\mathbb{H}]$. This is the Witt K-theory $\mathbf{K}^{\op{W}}_\bullet$, cf. \cite[p.61]{MField}. The identification $\mathbf{K}^{\op{W}}_d\cong \mathbf{I}^d$  follows from \cite[Theorem 2.1]{morel:puissances}
\end{proof}

\begin{remark}
In the complex realization, the above is the case $\op{V}_{2,2d+1}$ is the one with the fibration $\op{S}^{2d-1}\to\op{V}_{2,2d+1}\to\op{S}^{2d}$ where the boundary map $\pi_{i+1}(\op{S}^{2d-1})\to\pi_{i}(\op{S}^{2d-2})$ is multiplication by $2$. 

Proposition~\ref{prop:stiefel1} implies that for any field $F$ of characteristic $\neq 2$, we have 
\[
[\op{Q}_{2d-2},\op{V}_{2,2d+1}]_{\mathbb{A}^1}\cong \left\{\begin{array}{ll}
\op{K}^{\op{MW}}_1(F^\times)/2 & d\equiv 0\mod 2\\
\op{I}(F) & d\equiv 1\mod 2\end{array}\right.
\]
which are both trivial over algebraically closed fields, consistent with the connectivity of the Stiefel varieties over $\mathbb{C}$.
On the other hand,  we have 
\[
[\op{Q}_{2d-1},\op{V}_{2,2d+1}]_{\mathbb{A}^1}\cong \left\{\begin{array}{ll}
\op{GW}(F)/2 & d\equiv 0\mod 2\\
\op{W}(F) & d\equiv 1\mod 2\end{array}\right.
\]
Over algebraically closed fields, both cases are isomorphic to $\mathbb{Z}/2\mathbb{Z}$ recovering the classical isomorphism $\pi_{2d-1}\op{V}_{2,2d+1}(\mathbb{C})\cong\mathbb{Z}/2\mathbb{Z}$.

In real realization, there are two cases for the homotopy groups of $\op{V}_{2,2d+1}(\mathbb{R})\cong\op{SO}(d+1,d)/\op{SO}(d,d-1)$. The result is $\mathbb{Z}/2\mathbb{Z}$ whenever $d$ is even and $\mathbb{Z}$ whenever $d$ is odd. This is exactly what the above formula reproduces for the real realization.
\end{remark}

Combining the above technique with Morel's Freudenthal suspension theorem \cite[Theorem 5.60]{MField}, some more $\mathbb{A}^1$-homotopy sheaves of Stiefel varieties can be described. The description is not as explicit as the one above, due to our limited knowledge of unstable $\mathbb{A}^1$-homotopy sheaves of spheres. 

\begin{theorem}
\label{thm:stiefelhigh}
Let $F$ be a perfect field of characteristic $\neq 2$, and let $d\geq 3$. Then for all $i\leq 2(d-2)$, there are exact sequences  
\[
0\to \bm{\pi}_i^{\mathbb{A}^1}(\op{Q}_{2d-1})/a \to \bm{\pi}^{\mathbb{A}^1}_i(\op{V}_{2,2d+1})\to \ker\left(a\colon\bm{\pi}^{\mathbb{A}^1}_{i-1}(\op{Q}_{2d-1})\to \bm{\pi}^{\mathbb{A}^1}_{i-1}(\op{Q}_{2d-1})\right)\to 0
\]
where $a=2$ if $d\equiv 0\bmod 2$ and $a=[\mathbb{H}]$ if $d\equiv 1\bmod 2$. 
\end{theorem}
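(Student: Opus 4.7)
The plan is to run the argument of Proposition~\ref{prop:stiefel1} in a range, using the fiber sequence $\op{Q}_{2d-1}\to\op{V}_{2,2d+1}\to\op{Q}_{2d}$ from Lemma~\ref{lem:quadfib} together with the $\mathbb{A}^1$-equivalence $\op{Q}_{2d}\simeq \Sigma_{\op{s}}^1\op{Q}_{2d-1}$ established via the clutching construction in \cite{AsokDoranFasel}. Since $\op{Q}_{2d-1}$ is $\mathbb{A}^1$-$(d-2)$-connected (Morel), the Freudenthal suspension theorem gives that the suspension unit induces isomorphisms $\bm{\pi}^{\mathbb{A}^1}_j(\op{Q}_{2d-1})\cong \bm{\pi}^{\mathbb{A}^1}_{j+1}(\Sigma_{\op{s}}^1\op{Q}_{2d-1})\cong\bm{\pi}^{\mathbb{A}^1}_{j+1}(\op{Q}_{2d})$ for $j\leq 2(d-2)$. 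This lets us rewrite the two terms $\bm{\pi}^{\mathbb{A}^1}_{i+1}(\op{Q}_{2d})$ and $\bm{\pi}^{\mathbb{A}^1}_i(\op{Q}_{2d})$ appearing in the long exact sequence (for $i\leq 2(d-2)$) as $\bm{\pi}^{\mathbb{A}^1}_i(\op{Q}_{2d-1})$ and $\bm{\pi}^{\mathbb{A}^1}_{i-1}(\op{Q}_{2d-1})$, respectively.

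The second step is to identify the connecting homomorphisms under these identifications as multiplication by the element $a\in\op{GW}(F)$. The connecting map comes from a morphism of motivic spaces $\partial\colon \Omega_{\op{s}}\op{Q}_{2d}\to\op{Q}_{2d-1}$. Precomposing with the unit $\eta\colon \op{Q}_{2d-1}\to \Omega_{\op{s}}\Sigma_{\op{s}}^1\op{Q}_{2d-1}\simeq \Omega_{\op{s}}\op{Q}_{2d}$ produces a self-map of $\op{Q}_{2d-1}$, and by the identification of the connecting map already used in Proposition~\ref{prop:stiefel1}, this self-map is exactly the composition $\pi\circ\tau\colon \op{Q}_{2d-1}\to\op{SO}(2d)\to\op{Q}_{2d-1}$ whose $\mathbb{A}^1$-degree was computed to be $a$ in Proposition~\ref{prop:degree}. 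Freudenthal gives that $\eta_\ast$ is an isomorphism on $\bm{\pi}^{\mathbb{A}^1}_j$ in the range $j\leq 2(d-2)$, so $\partial_\ast$ is itself multiplication by $a\in\op{GW}(F)\cong[\op{Q}_{2d-1},\op{Q}_{2d-1}]_{\mathbb{A}^1}$ on $\bm{\pi}^{\mathbb{A}^1}_j(\op{Q}_{2d-1})$ in this range.

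Putting everything together, the long exact $\mathbb{A}^1$-homotopy sequence becomes, for $i\leq 2(d-2)$, a sequence of the form
\[
\bm{\pi}^{\mathbb{A}^1}_i(\op{Q}_{2d-1})\xrightarrow{a}\bm{\pi}^{\mathbb{A}^1}_i(\op{Q}_{2d-1})\to \bm{\pi}^{\mathbb{A}^1}_i(\op{V}_{2,2d+1})\to \bm{\pi}^{\mathbb{A}^1}_{i-1}(\op{Q}_{2d-1})\xrightarrow{a}\bm{\pi}^{\mathbb{A}^1}_{i-1}(\op{Q}_{2d-1}),
\]
and the claimed short exact sequence is extracted in the standard way from this four-term exactness. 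The base case $i=d-1$ recovers Proposition~\ref{prop:stiefel1}, where the kernel term vanishes for connectivity reasons.

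The main obstacle is the second step: verifying that the connecting map is truly multiplication by the same class $a$ at every level $i\leq 2(d-2)$, not just at the first non-trivial level $i=d-1$. The point is that the connecting map is a single geometric map $\partial$ of spaces, and its class is pinned down by its image on $\bm{\pi}^{\mathbb{A}^1}_{d-1}$ via Freudenthal stability of self-maps of $\op{Q}_{2d-1}$. One has to be careful that the self-maps of $\op{Q}_{2d-1}$ arising in different homotopical degrees are genuinely the $\op{GW}(F)$-suspension of the one computed at the first level; this is what confines the conclusion to the strict Freudenthal range $i\leq 2(d-2)$ rather than one degree higher.
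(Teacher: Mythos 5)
Your proposal is correct and follows essentially the same route as the paper's proof: both identify the connecting map $\Omega_{\op{s}}\op{Q}_{2d}\to\op{Q}_{2d-1}$ by precomposing with the Freudenthal-connected unit $\op{Q}_{2d-1}\to\Omega_{\mathbb{A}^1}\Sigma_{\mathbb{A}^1}\op{Q}_{2d-1}$, so that on $\bm{\pi}^{\mathbb{A}^1}_i$ with $i\leq 2(d-2)$ it becomes multiplication by the degree $a$ computed in Proposition~\ref{prop:degree}, and then extract the short exact sequences from the long exact sequence of Lemma~\ref{lem:quadfib}. Your closing caveat about units in the identification matches the paper's remark that a unit discrepancy does not affect kernels or cokernels.
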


\begin{proof}
The connecting map can be reinterpreted as being induced from the morphism $\Omega\op{Q}_{2d}\to\op{Q}_{2d-1}$. Then we can consider the composition
\[
\op{Q}_{2d-1}\to\Omega_{\mathbb{A}^1}\Sigma_{\mathbb{A}^1}\op{Q}_{2d-1}\to\op{Q}_{2d-1}.
\]
The first map is the unit of the adjunction $\Omega_{\mathbb{A}^1}\dashv\Sigma_{\mathbb{A}^1}$, the second map is the morphism inducing the connecting map. Morel's Freudenthal suspension theorem \cite[Theorem 5.60]{MField} implies that the first map is $\mathbb{A}^1$-$2(d-2)$-connected since the sphere $\op{Q}_{2d-1}$ is $\mathbb{A}^1$-$(d-2)$-connected. In particular, the connecting map for $\bm{\pi}^{\mathbb{A}^1}_i$ with $i\leq 2(d-2)$ can be identified with multiplication by the degree of the composition $\op{Q}_{2d-1}\to\Omega\Sigma\op{Q}_{2d-1}\to\op{Q}_{2d-1}$ above. Note that the first map is multiplication by some unit, but that doesn't affect the computation of the kernel or cokernel. The claim then follows from the long exact homotopy sequence of Lemma~\ref{lem:quadfib}.
\end{proof}

\begin{remark}
At this point, not even the $\mathbb{A}^1$-homotopy sheaves $\bm{\pi}^{\mathbb{A}^1}_n(\op{Q}_{2n-1})$ are known. There is an explicit conjecture, though, formulated in \cite[Conjecture 7]{AsokFaselMetastable}. It is expected that for $n\geq 4$ there is an exact sequence of strictly $\mathbb{A}^1$-invariant sheaves of abelian groups of the form
\[
\mathbf{K}^{\op{M}}_{n+2}/24\to \bm{\pi}^{\mathbb{A}^1}_n(\op{Q}_{2n-1})\to \mathbf{GW}^n_{n+1}
\] 
which becomes short exact after $n$-fold contraction.
\end{remark}

\begin{proposition}
\label{prop:stiefel2}
The Stiefel variety $\op{V}_{2,2d}$ is $\mathbb{A}^1$-$(d-2)$-connected. For any $n$, we have 
\[
\bm{\pi}^{\mathbb{A}^1}_n(\op{V}_{2,2d})\cong \bm{\pi}^{\mathbb{A}^1}_n(\op{Q}_{2d-2})\times \bm{\pi}^{\mathbb{A}^1}_{n}(\op{Q}_{2d-1}).
\]
In particular, the first non-vanishing $\mathbb{A}^1$-homotopy sheaf is
\[
\bm{\pi}^{\mathbb{A}^1}_{d-1}(\op{V}_{2,2d})\cong \mathbf{K}^{\op{MW}}_{d-1}\times\mathbf{K}^{\op{MW}}_d.
\]
\end{proposition}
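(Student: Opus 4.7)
The plan is to follow closely the template of Proposition~\ref{prop:stiefel1}, with the combinatorics adapted to the even case. By Lemma~\ref{lem:quadfib}, there is an $\mathbb{A}^1$-fiber sequence $\op{Q}_{2d-2}\to\op{V}_{2,2d}\to\op{Q}_{2d-1}$, and Morel's connectivity results yield that both $\op{Q}_{2d-2}\simeq\op{S}^{2d-2,d-1}$ and $\op{Q}_{2d-1}\simeq\op{S}^{2d-1,d}$ are $\mathbb{A}^1$-$(d-2)$-connected. The induced long exact sequence
\[
\cdots\to\bm{\pi}^{\mathbb{A}^1}_n(\op{Q}_{2d-2})\to\bm{\pi}^{\mathbb{A}^1}_n(\op{V}_{2,2d})\to\bm{\pi}^{\mathbb{A}^1}_n(\op{Q}_{2d-1})\xrightarrow{\partial_n}\bm{\pi}^{\mathbb{A}^1}_{n-1}(\op{Q}_{2d-2})\to\cdots
\]
then immediately gives the $\mathbb{A}^1$-$(d-2)$-connectivity of $\op{V}_{2,2d}$.

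The product splitting in all degrees amounts to showing two things: (i) every connecting map $\partial_n$ vanishes, and (ii) the resulting short exact sequences all split. The first non-vanishing homotopy sheaves of fiber and base sit in the same degree $d-1$, so $\partial_{d-1}\colon\mathbf{K}^{\op{MW}}_d\to\bm{\pi}^{\mathbb{A}^1}_{d-2}(\op{Q}_{2d-2})=0$ is automatic. The first potentially nontrivial connecting map is
\[
\partial_d\colon\bm{\pi}^{\mathbb{A}^1}_d(\op{Q}_{2d-1})\to\mathbf{K}^{\op{MW}}_{d-1},
\]
which by the analysis in the proof of Proposition~\ref{prop:stiefel1} factors through the composition $\Omega_s\op{Q}_{2d-1}\to\op{SO}(2d-1)\to\op{Q}_{2d-2}$ obtained from the clutching description of the $\op{SO}(2d-1)$-torsor $\op{SO}(2d)\to\op{Q}_{2d-1}$.

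The main obstacle is precisely the verification of (i) and (ii). Unlike the odd case, where the relation $\op{Q}_{2d}\simeq\Sigma_s\op{Q}_{2d-1}$ put the suspension Freudenthal theorem at one's disposal, here one only has $\op{Q}_{2d-1}\simeq\op{Q}_{2d-2}\wedge\mathbb{G}_m$, a $\mathbb{G}_m$-suspension; so the reduction to a self-map degree of the fiber is not immediate. I would approach this via obstruction theory for lifting the classifying map $\op{Q}_{2d-1}\to\op{B}_{\op{Nis}}\op{SO}(2d-1)$ through $\op{B}_{\op{Nis}}\op{SO}(2d-2)$, which produces an $\mathbb{A}^1$-homotopy section of $\op{V}_{2,2d}\to\op{Q}_{2d-1}$ and hence the desired splitting in all degrees. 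The obstruction classes live in $\op{H}^{k+1}_{\op{Nis}}(\op{Q}_{2d-1},\bm{\pi}^{\mathbb{A}^1}_k(\op{Q}_{2d-2}))$; using that the cohomology of the motivic sphere $\op{S}^{2d-1,d}$ is concentrated in degrees $0$ and $2d-1$, most obstruction groups vanish for pure dimensional reasons, and the surviving ones (in top degree) can be tackled by an explicit clutching calculation in the spirit of Section~\ref{sec:clutching}, adapted to the odd-dimensional quadric base. Once such a section is established the LES decomposes everywhere into split short exact sequences, which yields the product formula $\bm{\pi}^{\mathbb{A}^1}_n(\op{V}_{2,2d})\cong\bm{\pi}^{\mathbb{A}^1}_n(\op{Q}_{2d-2})\times\bm{\pi}^{\mathbb{A}^1}_n(\op{Q}_{2d-1})$; specializing to $n=d-1$ and invoking Morel's identification of the first non-vanishing sheaves of the quadrics gives $\mathbf{K}^{\op{MW}}_{d-1}\times\mathbf{K}^{\op{MW}}_d$ as claimed.
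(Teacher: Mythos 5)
Your overall strategy is the paper's: produce a section of $\op{V}_{2,2d}\to\op{Q}_{2d-1}$ by obstruction theory (lifting the classifying map $\op{Q}_{2d-1}\to{\op{B}}_{\op{Nis}}\op{SO}(2d-1)$ through ${\op{B}}_{\op{Nis}}\op{SO}(2d-2)$ is the same as lifting $\op{id}_{\op{Q}_{2d-1}}$ along the projection), and then let the section split the long exact sequence of Lemma~\ref{lem:quadfib} in every degree. However, as written there is a genuine gap at exactly the decisive point. You assert that the cohomology of $\op{Q}_{2d-1}\simeq\mathbb{A}^d\setminus\{0\}$ is concentrated in degrees $0$ and $2d-1$; for Nisnevich cohomology with coefficients in strictly $\mathbb{A}^1$-invariant sheaves (which is what the obstruction groups $\op{H}^{k+1}_{\op{Nis}}(\op{Q}_{2d-1},\bm{\pi}^{\mathbb{A}^1}_k(\op{Q}_{2d-2}))$ are) the correct statement is concentration in degrees $0$ and $d-1$, the degree $2d-1$ being the topological, not the relevant cohomological, dimension. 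Because of this mislocation you conclude that a ``surviving'' obstruction in top degree remains, and you only sketch that it ``can be tackled by an explicit clutching calculation'' --- that computation is never carried out, and it would be problematic anyway, since its coefficient sheaf would be $\bm{\pi}^{\mathbb{A}^1}_{2d-2}(\op{Q}_{2d-2})$, a sheaf that is not known. So the existence of the section, and hence the product decomposition, is not actually established in your argument.

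The fix is that no computation is needed at all: with the correct concentration, the only possibly nonzero obstruction group is
\[
\op{H}^{d-1}_{\op{Nis}}\bigl(\op{Q}_{2d-1},\bm{\pi}^{\mathbb{A}^1}_{d-2}(\op{Q}_{2d-2})\bigr)\cong \bigl(\bm{\pi}^{\mathbb{A}^1}_{d-2}(\op{Q}_{2d-2})\bigr)_{-d},
\]
and this vanishes because $\op{Q}_{2d-2}$ is $\mathbb{A}^1$-$(d-2)$-connected. Hence all obstructions to lifting the identity vanish, the section exists, the connecting maps in the long exact sequence are zero, and the resulting split short exact sequences give $\bm{\pi}^{\mathbb{A}^1}_n(\op{V}_{2,2d})\cong \bm{\pi}^{\mathbb{A}^1}_n(\op{Q}_{2d-2})\times\bm{\pi}^{\mathbb{A}^1}_n(\op{Q}_{2d-1})$, with the case $n=d-1$ following from Morel's computations --- which is precisely the paper's proof. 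Your opening reduction (connectivity of $\op{V}_{2,2d}$ and vanishing of $\partial_{d-1}$) is correct; only the treatment of the obstruction groups needs to be repaired as above, and then your clutching digression can be discarded.
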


\begin{proof}
The result follows from the long exact sequence in $\mathbb{A}^1$-homotopy associated to the fibration 
\[
\op{Q}_{2d-2}\to \op{V}_{2,2d}\to \op{Q}_{2d-1}
\]
from Lemma~\ref{lem:quadfib}. The relevant portion of the long exact $\mathbb{A}^1$-homotopy sequence is the following
\[
\bm{\pi}_{d}^{\mathbb{A}^1}(\op{Q}_{2d-1})\stackrel{\partial}{\longrightarrow} \bm{\pi}_{d-1}^{\mathbb{A}^1}(\op{Q}_{2d-2})\to\bm{\pi}_{d-1}^{\mathbb{A}^1}(\op{V}_{2,2d})\to \bm{\pi}_{d-1}^{\mathbb{A}^1}(\op{Q}_{2d-1})\to \bm{\pi}_{d-2}^{\mathbb{A}^1}(\op{Q}_{2d-2}).
\]
Since $\op{Q}_{2d-1}$ and $\op{Q}_{2d-2}$ are both $\mathbb{A}^1$-$(d-2)$-connected all $\mathbb{A}^1$-homotopy sheaves to the right of $\bm{\pi}_{d-1}^{\mathbb{A}^1}(\op{Q}_{2d-1})$ will vanish. Using the information on $\mathbb{A}^1$-homotopy groups of spheres, cf. Proposition~\ref{prop:stiefel1}, we can rewrite the above exact sequence as
\[
\bm{\pi}_{d}^{\mathbb{A}^1}(\mathbb{A}^d\setminus\{0\})\stackrel{\partial}{\longrightarrow} \mathbf{K}^{\op{MW}}_{d-1}\to\bm{\pi}_{d-1}^{\mathbb{A}^1}(\op{V}_{2,2d})\to \mathbf{K}^{\op{MW}}_d\to 0
\]
We claim that the projection $\op{V}_{2,2d}\to\op{Q}_{2d-1}$ has a section. This follows from relative obstruction theory as discussed in Section~\ref{sec:prelims}. There is a sequence of obstructions for lifting the identity on $\op{Q}_{2d-1}$ to a map $\op{Q}_{2d-1}\to\op{V}_{2,2d}$, these obstructions live in cohomology groups $\op{H}^{i+1}_{\op{Nis}}(\op{Q}_{2d-1},\bm{\pi}^{\mathbb{A}^1}_i(\op{Q}_{2d-2}))$ of the base with coefficients in $\mathbb{A}^1$-homotopy sheaves of the fiber. The only possibly non-trivial cohomology group of $\op{Q}_{2d-1}$ is 
\[
\op{H}^{d-1}_{\op{Nis}}(\op{Q}_{2d-1},\bm{\pi}^{\mathbb{A}^1}_{d-2}(\op{Q}_{2d-2}))\cong \bm{\pi}^{\mathbb{A}^1}_{d-2}(\op{Q}_{2d-2})_{-d}. 
\]
Since the sphere $\op{Q}_{2d-2}$ is $\mathbb{A}^1$-$(d-2)$-connected, this implies that all the obstruction groups for lifting the identity vanish, and we obtain the required section $\op{Q}_{2d-1}\to\op{V}_{2,2d}$. As a consequence, the boundary map $\partial$ in the above $\mathbb{A}^1$-homotopy sequence is trivial, and the extension is split. This proves all the claims.
\end{proof}

\begin{remark}
Under complex realization, the case $\op{V}_{2,2d}$ is the one with the fibration $\op{S}^{2d-2}\to\op{V}_{2,2d}\to\op{S}^{2d-1}$. All the boundary map $\pi_{i+1}(\op{S}^{2d-1})\to\pi_{i}(\op{S}^{2d-2})$ are zero because they are induced by the sum of the identity and the antipodal map on the $2d-2$-sphere. Therefore, we get short exact sequences 
\[
0\to \pi_n(\op{S}^{2d-2})\to \pi_n(\op{V}_{2,2d})\to\pi_n(\op{S}^{2d-1})\to 0.
\]
The first non-vanishing homotopy is then $\pi_{2d-2}(\op{V}_{2,2d}(\mathbb{C}))\cong\mathbb{Z}$; its motivic analogue is 
\[
[\op{Q}_{2d-2},\op{V}_{2,2d}]_{\mathbb{A}^1}\cong \left(\mathbf{K}^{\op{MW}}_{d-1}\right)_{-(d-1)}\times \left(\mathbf{K}^{\op{MW}}_{d}\right)_{-(d-1)}\cong \op{GW}(k)\times \op{K}^{\op{MW}}_1(k).
\]
Over an algebraically closed field, the first summand corresponds to the classical homotopy group and the second summand is $\op{K}^{\op{M}}_1(k)\cong k^\times$; modulo primes only the first summand is visible. The next homotopy group is $\pi_{2d-1}(\op{V}_{2,2d}(\mathbb{C}))\cong \mathbb{Z}/2\mathbb{Z}\times\mathbb{Z}$ with the following motivic analogue 
\[
[\op{Q}_{2d-1},\op{V}_{2,2d}]_{\mathbb{A}^1}\cong \left(\mathbf{K}^{\op{MW}}_{d-1}\right)_{-d}\times \left(\mathbf{K}^{\op{MW}}_{d}\right)_{-d}\cong \op{W}(k)\times\op{GW}(k).
\]

In the real realization, Proposition~\ref{prop:stiefel2} corresponds to the case where 
\[
\op{SO}(d,d)/\op{SO}(d-1,d-1)\cong \op{S}^{d-1}\times\op{S}^{d-1}. 
\]
\end{remark}

\begin{remark}
In particular, computations of the $\op{Q}_{2d-1}$-part of the obstruction groups for splitting off hyperbolic planes from quadratic forms will follow from computations of obstruction groups for splitting off trivial lines from projective modules, cf. \cite{AsokFaselSplitting}. $\mathbb{A}^1$-homotopy group sheaves of $\op{Q}_{2d}$-part can again be related to those of $\op{Q}_{2d-1}$ via Morel's Freudenthal suspension theorem. 
\end{remark}

\section{The splitting theorems and examples}
\label{sec:splitting}

Having determined the relevant $\mathbb{A}^1$-homotopy sheaves of the orthogonal Stiefel varieties $\op{V}_{2,n}$, we can now deduce the splitting theorems and discuss examples.

\subsection{Proofs of splitting results}

The first result is a version of the splitting result of Roy, cf. \cite[Theorem 7.2]{roy}. 

\begin{theorem}
\label{thm:stablesplitting}
Let $F$ be a perfect field of characteristic unequal to $2$ and let $X=\op{Spec}A$ be a smooth affine scheme over $F$ of dimension $d$. Let $(\mathscr{P},\phi)$ be a generically trivial quadratic form over $A$ of rank $2n$ or $2n+1$. If $d\leq n-1$, then $(\mathscr{P},\phi)$ splits off a hyperbolic plane. 
\end{theorem}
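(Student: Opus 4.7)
The plan is to recast splitting off a hyperbolic plane as an $\mathbb{A}^1$-obstruction problem and then observe that the $(n-2)$-connectivity of the orthogonal Stiefel varieties, combined with the Nisnevich cohomological dimension bound for smooth affine schemes, kills every potential obstruction. Concretely, by the $\mathbb{A}^1$-representability theorem (\prettyref{thm:representability}) the form $(\mathscr{P},\phi)$ of rank $m\in\{2n,2n+1\}$ corresponds to a classifying map $X\to{\op{B}}_{\op{Nis}}\op{SO}(m)$, and by \prettyref{prop:stabil} this map lifts along ${\op{B}}_{\op{Nis}}\op{SO}(m-2)\to{\op{B}}_{\op{Nis}}\op{SO}(m)$ precisely when the form splits off a hyperbolic plane. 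The $\mathbb{A}^1$-homotopy fiber of this stabilization is the orthogonal Stiefel variety $\op{V}_{2,m}$, so Moore--Postnikov obstruction theory as recalled in Section~\ref{sec:prelims} produces a sequence of obstructions living in the Nisnevich cohomology groups $\op{H}^{i+1}_{\op{Nis}}(X,\bm{\pi}_i^{\mathbb{A}^1}(\op{V}_{2,m}))$, possibly twisted by the action of $\bm{\pi}_1^{\mathbb{A}^1}({\op{B}}_{\op{Nis}}\op{SO}(m))\cong\mathscr{H}^1_{\et}(\mu_2)$.

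The key inputs are then the connectivity statements of \prettyref{prop:stiefel1} and \prettyref{prop:stiefel2}, which tell us that $\op{V}_{2,m}$ is $\mathbb{A}^1$-$(n-2)$-connected for both parities of $m$; hence $\bm{\pi}_i^{\mathbb{A}^1}(\op{V}_{2,m})=0$ whenever $i\leq n-2$. On the other hand, since $X$ is a smooth affine $F$-scheme of dimension $d$, Nisnevich cohomology $\op{H}^j_{\op{Nis}}(X,-)$ vanishes for $j>d$. The hypothesis $d\leq n-1$ is equivalent to $d-1\leq n-2$, so for each integer $i\geq 0$ either $i\leq n-2$ (and the coefficient sheaf is zero) or $i+1>d$ (and the Nisnevich cohomology is zero). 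Every obstruction group is therefore trivial, and the Moore--Postnikov tower assembles into the desired lift.

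The one point that deserves some care is the $\mathscr{H}^1_{\et}(\mu_2)$-equivariance in the Moore--Postnikov obstructions, since the classifying spaces ${\op{B}}_{\op{Nis}}\op{SO}(m)$ are not $\mathbb{A}^1$-simply-connected. For $n\geq 3$ the fiber $\op{V}_{2,m}$ is itself $\mathbb{A}^1$-simply-connected and the discussion of Section~\ref{sec:prelims} applies verbatim; moreover equivariant cohomology with coefficients in the zero sheaf, or in any coefficient sheaf but in degrees exceeding the cohomological dimension of $X$, still vanishes, so equivariance does not resurrect any of the obstructions. For $n\leq 2$ the hypothesis $d\leq n-1$ forces $d\leq 1$, and the claim reduces to the low-dimensional analysis already carried out in \cite{hyperbolic-dim3}. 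This equivariance bookkeeping is where I expect most of the actual writing effort to go, but it does not change the thrust of the vanishing argument.
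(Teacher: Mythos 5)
Your proposal is correct and follows essentially the same route as the paper's proof: translate the splitting question via \prettyref{thm:representability} and \prettyref{prop:stabil} into a lifting problem along ${\op{B}}_{\op{Nis}}\op{SO}(m-2)\to{\op{B}}_{\op{Nis}}\op{SO}(m)$, and kill all obstructions in $\op{H}^{i+1}_{\op{Nis}}(X,\bm{\pi}^{\mathbb{A}^1}_i(\op{V}_{2,m}))$ using the $\mathbb{A}^1$-$(n-2)$-connectivity of $\op{V}_{2,m}$ from Propositions~\ref{prop:stiefel1} and \ref{prop:stiefel2} together with the Nisnevich cohomological dimension bound $d\leq n-1$. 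Your additional remarks on the $\mathscr{H}^1_{\et}(\mu_2)$-equivariance are a sensible extra precaution (the paper's proof does not spell this out), but they do not change the argument.
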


\begin{proof}
Denote by $m$ the rank of the quadratic form. 
The stably hyperbolic quadratic form $(\mathscr{P},\phi)$ corresponds to a rationally trivial $\op{SO}(m)$-torsor over $X$, i.e., an element of $\op{H}^1_{\op{Nis}}(X;\op{SO}(m))$. By Theorem~\ref{thm:representability}, we can identify the latter pointed set with the set $[X,{\op{B}_{\op{Nis}}}\op{SO}(m)]_{\mathbb{A}^1}$ of $\mathbb{A}^1$-homotopy classes of maps into the classifying space of rationally trivial $\op{SO}(m)$-torsors, pointed by the trivial torsor. By Proposition~\ref{prop:stabil}, we can reformulate the claim as follows: if the rank--dimension inequality is satisfied, then the element $P\in [X,{\op{B}_{\op{Nis}}}\op{SO}(m)]_{\mathbb{A}^1}$ corresponding to $(\mathscr{P},\phi)$ is in the image of the map
\[
[X,{\op{B}_{\op{Nis}}}\op{SO}(m-2)]_{\mathbb{A}^1}\to [X,{\op{B}_{\op{Nis}}}\op{SO}(m)]_{\mathbb{A}^1}
\]
induced by the stabilization morphism.

By the relative $\mathbb{A}^1$-obstruction theory, a lift of the map $X\to{\op{B}_{\op{Nis}}}\op{SO}(m)$ exists if all of the obstructions in the cohomology groups $\op{H}^{i+1}_{\op{Nis}}(X;\bm{\pi}_i^{\mathbb{A}^1}(\op{V}_{2,m}))$ vanish  for all $i$. Since $X$ is a finite-dimensional smooth scheme, the obstruction groups will vanish automatically for $i\geq d$. The claim follows if $\bm{\pi}_i^{\mathbb{A}^1}(\op{V}_{2,m})=0$ for all $i\leq d-1$. For the case $m=2n+1$, Proposition~\ref{prop:stiefel1} states that the Stiefel varieties are $\mathbb{A}^1$-$(n-2)$-connected. Similarly, for $m=2n$, the Stiefel varieties are $\mathbb{A}^1$-$(n-2)$-connected by Proposition~\ref{prop:stiefel2}. By assumption $d\leq n-1$ which is the required range to prove the result.
\end{proof}

\begin{remark}
The splitting result in Theorem~\ref{thm:stablesplitting} is a version of a classical result analogous to Serre's splitting result for projective modules. In \cite[Theorem 7.2]{roy}, A. Roy has proved a splitting result which implies in particular that if $A$ is a commutative ring $A$ of Krull dimension $d$ in which $2$ is invertible, a stably hyperbolic quadratic form is hyperbolic if the rank of the underlying projective module is $\geq 2d+2$. For stably hyperbolic forms, Roy's result is much more general than Theorem~\ref{thm:stablesplitting} because the latter requires the commutative ring to be smooth over a perfect field. On the other hand, \cite[Theorem 7.2]{roy} needs something close to the stably hyperbolic hypothesis to get the sharp bound where we only need a generically hyperbolic quadratic form. In any case, the alternative $\mathbb{A}^1$-topological proof has the appeal that it is very close to the arguments one would use in classical topology (which inspired the algebraic proofs of Serre and Roy).
\end{remark}

Next, we will discuss what happens at the edge of the stable range. For projective modules, the first obstruction to splitting off a trivial line is given by the Euler class, cf. \cite{MField}. Something similar but slightly different happens for quadratic forms. There are again obstruction classes controlling when a generically trivial quadratic form splits off a hyperbolic plane, and they live in the cohomology of $X$ with coefficients in the first non-vanishing $\mathbb{A}^1$-homotopy sheaf of the appropriate Stiefel variety. 

\begin{theorem}
\label{thm:euler1}
Let $F$ be a perfect field of characteristic unequal to $2$ and let $X=\op{Spec}A$ be a smooth affine scheme over $F$ of dimension $d$. Let $(\mathscr{P},\phi)$ be a generically trivial quadratic form over $A$ of rank $2d+1$ admitting a spin lift. Then $(\mathscr{P},\phi)$ splits off a hyperbolic plane if and only if the obstruction class in 
\[
\op{H}^d_{\op{Nis}}(X,\mathbf{K}^{\op{MW}}_d/(1+\langle(-1)^d\rangle))\cong\left\{ \begin{array}{ll}
\widetilde{\op{CH}}^d(X)/2 & d\equiv 0\bmod 2\\
\op{H}^d_{\op{Nis}}(X,\mathbf{I}^d) & d\equiv 1\bmod 2
\end{array}\right.
\]
vanishes.
\end{theorem}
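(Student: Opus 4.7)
The plan is to reduce the splitting question to an obstruction-theoretic lifting problem and then invoke the connectivity and first homotopy computation from Proposition~\ref{prop:stiefel1}.

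First, I would use Theorem~\ref{thm:representability} to identify $(\mathscr{P},\phi)$ with a pointed homotopy class of maps $f\colon X\to {\op{B}}_{\op{Nis}}\op{SO}(2d+1)$, and Proposition~\ref{prop:stabil} to reformulate the statement ``$(\mathscr{P},\phi)$ splits off a hyperbolic plane'' as the existence of a lift of $f$ through ${\op{B}}_{\op{Nis}}\op{SO}(2d-1)\to {\op{B}}_{\op{Nis}}\op{SO}(2d+1)$, whose homotopy fiber is $\op{V}_{2,2d+1}$. The spin lift hypothesis lets me replace this with the analogous lifting problem along ${\op{B}}_{\op{Nis}}\op{Spin}(2d-1)\to{\op{B}}_{\op{Nis}}\op{Spin}(2d+1)$, whose base is $\mathbb{A}^1$-simply connected; this removes the need to track $\mathscr{H}^1_{\et}(\mu_2)$-equivariance of the obstruction groups as discussed at the end of Section~\ref{sec:prelims}.

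Next, I would apply the Moore--Postnikov tower of the fibration with fiber $\op{V}_{2,2d+1}$. The sequence of obstructions to extending a partial lift sits in the groups $\op{H}^{i+1}_{\op{Nis}}(X,\bm{\pi}^{\mathbb{A}^1}_i(\op{V}_{2,2d+1}))$. By Proposition~\ref{prop:stiefel1}, the fiber is $\mathbb{A}^1$-$(d-2)$-connected, so the obstruction groups vanish for all $i\leq d-2$, automatically providing a lift to the $(d-1)$-st stage. For $i\geq d$, Nisnevich cohomological dimension of the smooth $d$-dimensional affine scheme $X$ forces all higher obstructions to vanish. Hence the entire obstruction to producing a lift is concentrated in the single group $\op{H}^d_{\op{Nis}}(X,\bm{\pi}^{\mathbb{A}^1}_{d-1}(\op{V}_{2,2d+1}))$, and Proposition~\ref{prop:stiefel1} identifies this sheaf with $\mathbf{K}^{\op{MW}}_d/(1+\langle(-1)^d\rangle)$.

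It remains to identify the two special cases of the target cohomology. For $d$ even, $\langle(-1)^d\rangle=\langle 1\rangle$, so $1+\langle(-1)^d\rangle=2$, and I would use the long exact sequence induced by $\mathbf{K}^{\op{MW}}_d\xrightarrow{\cdot 2}\mathbf{K}^{\op{MW}}_d\to \mathbf{K}^{\op{MW}}_d/2$ together with the vanishing of $\op{H}^{d+1}_{\op{Nis}}(X,-)$ and the identification $\op{H}^d_{\op{Nis}}(X,\mathbf{K}^{\op{MW}}_d)\cong \widetilde{\op{CH}}^d(X)$ to present the obstruction group as $\widetilde{\op{CH}}^d(X)/2$. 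For $d$ odd, $1+\langle -1\rangle=[\mathbb{H}]$, so the quotient is Witt K-theory $\mathbf{K}^{\op{W}}_d$, which by \cite[Theorem~2.1]{morel:puissances} agrees with $\mathbf{I}^d$, giving the second case. The only subtle point in the whole argument is the removal of the $\mathscr{H}^1_{\et}(\mu_2)$-equivariance via the spin lift reduction; once this is in place, the connectivity and dimension bounds do all the remaining work.
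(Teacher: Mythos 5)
Your proposal is correct and follows essentially the same route as the paper: representability plus Proposition~\ref{prop:stabil} turn splitting into a lifting problem, the spin lift removes the $\mathscr{H}^1_{\et}(\mu_2)$-equivariance issue, and the connectivity and dimension bounds concentrate everything in the single obstruction group $\op{H}^d_{\op{Nis}}(X,\bm{\pi}^{\mathbb{A}^1}_{d-1}(\op{V}_{2,2d+1}))$ computed in Proposition~\ref{prop:stiefel1}. Your added justification of the presentation $\op{H}^d_{\op{Nis}}(X,\mathbf{K}^{\op{MW}}_d/2)\cong\widetilde{\op{CH}}^d(X)/2$ (using right-exactness of top-degree Nisnevich cohomology on a $d$-dimensional scheme) is a detail the paper leaves implicit, and it is fine.
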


\begin{proof}
The argument is similar to the one for Theorem~\ref{thm:stablesplitting}. The relevant first obstruction lives in $\op{H}^d_{\op{Nis}}(X;\bm{\pi}_{d-1}^{\mathbb{A}^1}(\op{V}_{2,2d+1}))$, cf. Section~\ref{sec:prelims}. The relevant homotopy sheaf of $\op{V}_{2,2d+1}$ has been determined in Proposition~\ref{prop:stiefel1}. 
\end{proof}

\begin{theorem}
\label{thm:euler2}
Let $F$ be a perfect field of characteristic unequal to $2$ and let $X=\op{Spec}A$ be a smooth affine scheme over $F$ of dimension $d$. Let $(\mathscr{P},\phi)$ be a generically hyperbolic quadratic form over $A$ of rank $2d$ admitting a spin lift. Then $(\mathscr{P},\phi)$ splits off a hyperbolic plane if and only if the obstruction class in 
\[
\widetilde{\op{CH}}^d(X)\times \op{coker}\left(\op{CH}^{d-1}(X)\to\op{H}^d_{\op{Nis}}(X,\mathbf{I}^d)\right)
\]
vanishes.
\end{theorem}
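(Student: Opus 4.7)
\begin{proofof}{Theorem~\ref{thm:euler2} (proposal)}
The plan is to mirror the argument for Theorem~\ref{thm:euler1}, now for the even-rank fiber sequence
\[
\op{V}_{2,2d}\to \op{B}_{\op{Nis}}\op{Spin}(2d-2)\to \op{B}_{\op{Nis}}\op{Spin}(2d)
\]
obtained from Proposition~\ref{prop:stabil} by passing to spin covers. Since $(\mathscr{P},\phi)$ admits a spin lift, splitting off a hyperbolic plane is equivalent to lifting the classifying map $X\to \op{B}_{\op{Nis}}\op{Spin}(2d)$ through the stabilization morphism (cf.~the end of Section~\ref{sec:prelims}), and because both spin classifying spaces are $\mathbb{A}^1$-simply connected the Moore--Postnikov obstructions live in ordinary Nisnevich cohomology groups $\op{H}^{i+1}_{\op{Nis}}(X,\bm{\pi}^{\mathbb{A}^1}_i(\op{V}_{2,2d}))$ without any $\bm{\pi}^{\mathbb{A}^1}_1$-equivariance.

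By Proposition~\ref{prop:stiefel2} the fiber $\op{V}_{2,2d}$ is $\mathbb{A}^1$-$(d-2)$-connected, so all obstructions with $i\leq d-2$ vanish; on the other hand for $i\geq d$ the cohomological degree satisfies $i+1>d=\dim X$, so the obstruction group vanishes by Nisnevich cohomological dimension. Hence the only possibly non-vanishing obstruction is the one in degree $d$ with coefficients in $\bm{\pi}^{\mathbb{A}^1}_{d-1}(\op{V}_{2,2d})$, and the lift exists if and only if this single class is zero. The product decomposition $\bm{\pi}^{\mathbb{A}^1}_{d-1}(\op{V}_{2,2d})\cong \mathbf{K}^{\op{MW}}_d\times \mathbf{K}^{\op{MW}}_{d-1}$ from Proposition~\ref{prop:stiefel2} then splits the obstruction into a pair of classes in
\[
\op{H}^d_{\op{Nis}}(X,\mathbf{K}^{\op{MW}}_d)\times \op{H}^d_{\op{Nis}}(X,\mathbf{K}^{\op{MW}}_{d-1}).
\]
The first factor is $\widetilde{\op{CH}}^d(X)$ by the definition of Chow--Witt groups, the spin lift ensuring triviality of the relevant twisting line bundle.

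To identify the second factor I would use the short exact sequence of strictly $\mathbb{A}^1$-invariant sheaves
\[
0\to \mathbf{I}^d\to \mathbf{K}^{\op{MW}}_{d-1}\to \mathbf{K}^{\op{M}}_{d-1}\to 0
\]
combined with the Rost--Schmid vanishing $\op{H}^i_{\op{Nis}}(X,\mathbf{K}^{\op{M}}_j)=0$ for $i>j$: the induced long exact sequence truncates to
\[
\op{CH}^{d-1}(X)\xrightarrow{\beta}\op{H}^d_{\op{Nis}}(X,\mathbf{I}^d)\to \op{H}^d_{\op{Nis}}(X,\mathbf{K}^{\op{MW}}_{d-1})\to 0,
\]
giving the cokernel description stated. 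The further identification with $\op{H}^d_{\op{Nis}}(X,\mathbf{W})$ follows from the filtration of $\mathbf{W}$ by powers of the fundamental ideal, using Milnor's theorem $\mathbf{I}^j/\mathbf{I}^{j+1}\cong \mathbf{K}^{\op{M}}_j/2$ together with the vanishing $\op{H}^i_{\op{Nis}}(X,\mathbf{K}^{\op{M}}_j/2)=0$ for $i>j$, which forces the natural maps $\op{H}^d_{\op{Nis}}(X,\mathbf{I}^j)\to \op{H}^d_{\op{Nis}}(X,\mathbf{I}^{j-1})$ to be isomorphisms for $1\leq j\leq d-1$. The main point requiring care is matching up the connecting map from the first short exact sequence with the Bockstein $\beta$ in the main theorem and verifying that this chain of isomorphisms is compatible with the cokernel presentation; once these identifications are in place, everything else is a direct consequence of Proposition~\ref{prop:stiefel2} and the obstruction-theoretic framework recalled in Section~\ref{sec:prelims}.
\end{proofof}
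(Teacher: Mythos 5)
Your proposal is correct and follows essentially the same route as the paper: relative obstruction theory for the (spin-lifted) stabilization fibration, connectivity of $\op{V}_{2,2d}$ plus the dimension bound to isolate the single degree-$d$ obstruction, the product decomposition of $\bm{\pi}^{\mathbb{A}^1}_{d-1}(\op{V}_{2,2d})$ from Proposition~\ref{prop:stiefel2}, and the short exact sequence $0\to\mathbf{I}^d\to\mathbf{K}^{\op{MW}}_{d-1}\to\mathbf{K}^{\op{M}}_{d-1}\to 0$ with Gersten vanishing of $\op{H}^d_{\op{Nis}}(X,\mathbf{K}^{\op{M}}_{d-1})$ to obtain the cokernel presentation. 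The extra identification with $\op{H}^d_{\op{Nis}}(X,\mathbf{W})$ is not needed for the stated theorem but is correct and matches the discussion in the introduction.
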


\begin{proof}
By the relative $\mathbb{A}^1$-obstruction theory, the relevant obstruction lives in the group $\op{H}^d_{\op{Nis}}(X;\bm{\pi}_{d-1}^{\mathbb{A}^1}(\op{V}_{2,2d}))$ and by the dimension assumption all higher obstructions will vanish. By Proposition~\ref{prop:stiefel2}, we have $\bm{\pi}_{d-1}^{\mathbb{A}^1}(\op{V}_{2,2d})\cong \mathbf{K}^{\op{MW}}_d\times\mathbf{K}^{\op{MW}}_{d-1}$ which induces a product decomposition of the Nisnevich cohomology. For the first factor we get an Euler class group $\op{H}^d_{\op{Nis}}(X,\mathbf{K}^{\op{MW}}_d)\cong \widetilde{\op{CH}}^d(X)$ (note that we have the trivial duality here because we assumed trivial determinant). For the second factor we get the cohomology group $\op{H}^d_{\op{Nis}}(X,\mathbf{K}^{\op{MW}}_{d-1})$ which sits in an exact sequence
\[
\op{CH}^{d-1}(X)\cong\op{H}^{d-1}_{\op{Nis}}(X,\mathbf{K}^{\op{M}}_{d-1})\xrightarrow{\beta} \op{H}^d_{\op{Nis}}(X,\mathbf{I}^{d})\to \op{H}^d_{\op{Nis}}(X,\mathbf{K}^{\op{MW}}_{d-1})\to 
\op{H}^d_{\op{Nis}}(X,\mathbf{K}^{\op{M}}_{d-1}).
\]
The last term vanishes by the Gersten resolution for Milnor K-theory. This induces the required presentation. 
\end{proof}

\begin{remark}
There are natural generalizations of the above results: if the quadratic form doesn't admit a spin lift, we would have to consider cohomology equivariant with respect to the action of $\bm{\pi}^{\mathbb{A}^1}_1{\op{B}}_{\op{Nis}}\op{SO}(n)\cong\mathscr{H}^1_{\et}(\mu_2)$. Of course, understanding or computing these equivariant cohomology groups is another matter.
\end{remark}

\begin{remark}
Note that the splitting for the Stiefel variety implies in particular that the map $\op{Q}_{2d+1}\to{\op{B}}_{\op{Nis}}\op{SO}(2d+1)$ factors through the stabilization map ${\op{B}}_{\op{Nis}}\op{SO}(2d)\to{\op{B}}_{\op{Nis}}\op{SO}(2d+1)$. 
\end{remark}

\subsection{Consequences and examples: odd-rank forms} 
\label{sec:examples1}

\begin{proposition}
\label{prop:special1}
Let $F$ be an algebraically closed field of characteristic unequal to $2$ and let $X=\op{Spec} A$ be a smooth affine variety of dimension $d$ over $F$. A generically trivial quadratic form $(\mathscr{P},\phi)$ over $A$ of rank $2d+1$ which admits a spin lift splits off a hyperbolic plane. 
\end{proposition}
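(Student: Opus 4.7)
The plan is to invoke Theorem~\ref{thm:euler1}, which reduces the splitting question to the vanishing of an obstruction class living in either $\widetilde{\op{CH}}^d(X)/2$ (when $d$ is even) or $\op{H}^d_{\op{Nis}}(X,\mathbf{I}^d)$ (when $d$ is odd). I will show that under our hypotheses both candidate obstruction groups are in fact trivial.

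The first ingredient is the observation that since $F$ is algebraically closed we have $\op{I}(F)=0$, and hence $\mathbf{I}^n(F)=0$ for all $n\geq 1$. Since every residue field at a closed point of $X$ equals $F$, the terminal term $\bigoplus_{x\in X^{(d)}}\mathbf{I}^{n-d}(k(x))$ of the Gersten resolution of $\mathbf{I}^n$ then vanishes for all $n\geq d+1$, and hence $\op{H}^d_{\op{Nis}}(X,\mathbf{I}^n)=0$ for those $n$.

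Feeding this into the two short exact sequences
\[
0\to \mathbf{I}^{d+1}\to \mathbf{I}^d\to \mathbf{K}^{\op{M}}_d/2\to 0,\qquad 0\to \mathbf{I}^{d+1}\to \mathbf{K}^{\op{MW}}_d\to \mathbf{K}^{\op{M}}_d\to 0,
\]
and using that $X$ has Nisnevich cohomological dimension at most $d$, would produce $\op{H}^d_{\op{Nis}}(X,\mathbf{I}^d)\cong \op{CH}^d(X)/2$ as well as $\widetilde{\op{CH}}^d(X)\cong \op{CH}^d(X)$. Thus in both parities the relevant obstruction group reduces to $\op{CH}^d(X)/2$.

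To close the argument I would invoke the classical divisibility of $\op{CH}^d(X)$ for a smooth affine $d$-fold over an algebraically closed field: through every closed $F$-point of $X$ one finds a smooth affine curve $C\subset X$, and the class of the point in $\op{CH}^d(X)$ lies in the image of $\op{Pic}(C)$, which is divisible as it is a quotient of the Jacobian of a smooth projective compactification of $C$. Hence $\op{CH}^d(X)/2=0$, the obstruction vanishes, and $(\mathscr{P},\phi)$ splits off a hyperbolic plane. The only nontrivial external input is this divisibility statement; the remaining manipulations with Gersten resolutions and short exact sequences are formal.
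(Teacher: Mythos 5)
Your argument is correct, and it reaches the same destination as the paper --- reducing everything to the vanishing of $\op{CH}^d(X)/2$ --- but by a somewhat different route at both technical junctures. For the identification of the obstruction groups, the paper compares the Rost--Schmid complexes of $\mathbf{K}^{\op{MW}}_d/2$ (resp.\ $\mathbf{I}^d$) and $\mathbf{K}^{\op{M}}_d/2$ in top degree, using that $\op{GW}(\kappa(y))/2\cong\mathbb{Z}/2\mathbb{Z}$ at closed points of a variety over a quadratically closed field; you instead feed the vanishing $\op{H}^d_{\op{Nis}}(X,\mathbf{I}^{d+1})=0$ (itself obtained from the Gersten complex and the fact that residue fields at closed points are algebraically closed) into Morel's sequence $0\to\mathbf{I}^{d+1}\to\mathbf{K}^{\op{MW}}_d\to\mathbf{K}^{\op{M}}_d\to 0$ and the sequence $0\to\mathbf{I}^{d+1}\to\mathbf{I}^d\to\mathbf{K}^{\op{M}}_d/2\to 0$, together with Nisnevich cohomological dimension $\leq d$. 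Both computations are legitimate; yours trades the explicit complex-level comparison for the sheaf-level exact sequences (and hence uses the resolved Milnor conjecture and Morel's fundamental sequence as inputs). For the final vanishing, the paper invokes Roitman's theorem to get unique divisibility of $\op{CH}^d(X)$, whereas you only need plain divisibility and prove it directly: every point class is pushed forward from $\op{Pic}(C)$ of a smooth affine curve $C\subset X$ through the point, and $\op{Pic}(C)$ is a quotient of $\op{Pic}^0(\bar{C})=J(\bar C)(F)$, which is divisible. This is the standard argument and is weaker input than Roitman; the only step deserving a word of care is the existence of the smooth (affine) curve through a prescribed closed point, which is the classical Bertini-with-base-point argument for hyperplane sections and is valid over any infinite field, hence in particular over an algebraically closed field of positive characteristic. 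With that caveat recorded, your proof is complete.
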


\begin{proof}
There are two cases. By Theorem~\ref{thm:euler1}, if $d\equiv 0\bmod 2$, the relevant obstruction lives in $\op{H}^d_{\op{Nis}}(X,\mathbf{K}^{\op{MW}}_d/2)$. The natural morphism $\mathbf{K}^{\op{MW}}_n/2\to\mathbf{K}^{\op{M}}_n/2$ induces a morphism of Gersten or Rost--Schmid complexes
\[
\xymatrix{
\bigoplus_{x\in X^{(d-1)}}\op{K}^{\op{MW}}_1(\kappa(x))/2\ar[r] \ar@{>>}[d] & 
\bigoplus_{y\in X^{(d)}}\op{GW}(\kappa(y))/2 \ar[r] \ar[d]^\cong &  \op{H}^d_{\op{Nis}}(X,\mathbf{K}^{\op{MW}}_d/2) \ar[r] \ar[d] &0 \\
\bigoplus_{x\in X^{(d-1)}}\op{K}^{\op{M}}_1(\kappa(x))/2\ar[r]  & 
\bigoplus_{y\in X^{(d)}}\mathbb{Z}/2\mathbb{Z} \ar[r] &  \op{Ch}^d(X) \ar[r] &0.
}
\]
Since  $F$ is quadratically closed, $\op{GW}(\kappa(y))/2\cong\mathbb{Z}/2\mathbb{Z}$ for all $y\in X^{(d)}$, and the left-hand vertical morphism is surjective; therefore, the right-hand vertical morphism is an isomorphism. By Roitman's theorem, $\op{CH}^d(X)$ is uniquely divisible and therefore $\op{Ch}^d(X)=0$. This implies that the obstruction group $\op{H}^d_{\op{Nis}}(X,\mathbf{K}^{\op{MW}}_d/2)$ vanishes, and by Theorem~\ref{thm:euler1}, the quadratic form splits off a hyperbolic plane. 

In the case if $d\equiv 1\bmod 2$, the relevant obstruction group is $\op{H}^d_{\op{Nis}}(X,\mathbf{I}^d)$. A similar argument as above implies that $\op{H}^d_{\op{Nis}}(X,\mathbf{I}^d)\cong \op{Ch}^d(X)\cong 0$ and again the obstruction vanishes and the quadratic form splits off a hyperbolic plane. 
\end{proof}

\begin{example}
To see that some condition is necessary in Proposition~\ref{prop:special1}, we discuss the case of the real algebraic 2-sphere $S=\mathbb{R}[X,Y,Z]/(X^2+Y^2+Z^2-1)$. By the Serre--Swan theorem, $\op{Pic}(S)=0$ and so the discussion in \cite[Section 6.3]{hyperbolic-dim3}  implies that $\op{H}^1_{\op{Nis}}(S,\op{SO}(5))\cong \op{H}^1_{\op{Nis}}(S,\op{Sp}_4)$. By \cite[Propositions 5.5, 5.13 and 5.23]{hyperbolic-dim3}, the morphism $\widetilde{\op{CH}}^2(S)\cong \op{H}^1_{\op{Nis}}(S,\op{Spin}(3))\to \op{H}^1_{\op{Nis}}(S,\op{Spin}(5))\cong \widetilde{\op{CH}}^2(S)$ induced by the stabilization map $\op{Spin}(3)\to\op{Spin}(5)$ is identified with multiplication by $2$. In particular, any class in $\widetilde{\op{CH}}^2(S)$ which is not divisible by $2$ provides an example of a rank 5 quadratic form which doesn't split off a hyperbolic plane. By \cite[Theorem 16.3.8]{fasel:memoir}, we have $\op{H}^2(S,\mathbf{I}^2)\cong \mathbb{Z}$. Using the natural surjection $\widetilde{\op{CH}}^2(S)\to \op{H}^2(S,\mathbf{I}^2)$ we find that there are classes in $\widetilde{\op{CH}}^2(S)$ which are not divisible by $2$. These correspond to quadratic forms of rank $5$ which are not obtained from rank $3$ forms by adding a hyperbolic plane. 
\end{example}

\begin{example}
To give an example for $d\equiv 1\bmod 2$, recall from \cite{hyperbolic-dim3} that for $X$ a smooth affine 3-fold over a perfect field of characteristic $\neq 2$, we have $\op{H}^1_{\op{Nis}}(X,\op{Spin}(7))\cong \op{CH}^2(X)$ with the bijection given by mapping a quadratic form or spin torsor to its second Chern class. On the other hand, we have a surjection $\op{H}^1_{\op{Nis}}(X,\op{Spin}(5))\twoheadrightarrow\widetilde{\op{CH}}^2(X)$ with the invariant given by the first Pontryagin class of the corresponding $\op{Sp}_4$-bundle. The stabilization morphism $\op{H}^1_{\op{Nis}}(X,\op{Spin}(5))\to \op{H}^1_{\op{Nis}}(X,\op{Spin}(7))$ is identified with the natural morphism $\widetilde{\op{CH}}^2(X)\to\op{CH}^2(X)$. This morphism isn't necessarily surjective, and the obstruction to surjectivity is exactly given by the Bockstein morphism $\op{CH}^2(X)\to\op{Ch}^2(X)\to \op{H}^3(X,\mathbf{I}^3)$. Incidentally, in this case, the second $\mathbb{A}^1$-homotopy sheaf of ${\op{B}}_{\op{Nis}}\op{SO}(5)$ is given $\mathbf{K}^{\op{MW}}_2$ in the extension
\[
0\to\mathbf{I}^3\to \bm{\pi}^{\mathbb{A}^1}_2({\op{B}}_{\op{Nis}}\op{Spin}(5))\to \mathbf{K}^{\op{M}}_2\to 0
\]
and the Bockstein morphism is exactly the boundary morphism for this extension. 

Consequently, the obstruction to splitting off a hyperbolic plane is exactly the third integral Stiefel--Whitney class $\beta(\overline{\op{c}}_2)$. Under the natural reduction morphism $\op{H}^3(X,\mathbf{I}^3)\to \op{Ch}^3(X)$, this class maps to $\op{Sq}^2(\overline{\op{c}}_2)=\overline{\op{c}}_3$. From the classification we had $\op{H}^1(X,\op{Spin}(7))\cong \op{CH}^2(X)$, and therefore any class in $\op{CH}^2(X)$ is realizable as second Chern class of a quadratic form. In particular, over a smooth affine 3-fold $X$, we can obtain an oriented quadratic form of rank $7$  which doesn't split off a hyperbolic plane whenever $\op{Sq}^2\colon \op{Ch}^2(X)\to\op{Ch}^3(X)$ is nontrivial. 
\end{example}

\begin{example}
The obstructions in Theorem~\ref{thm:euler1} are trivial for smooth affine quadrics of even dimension $\geq 8$ over quadratically closed fields. Any morphism $\op{Q}_{2d}\to {\op{B}}_{\op{Nis}}\op{SO}(2d+1)$ necessarily factors through the morphism ${\op{B}}_{\op{Nis}}\op{SO}(2d-1)\to {\op{B}}_{\op{Nis}}\op{SO}(2d+1)$. Viewing the morphism as element of $[\op{Q}_{2d},{\op{B}}_{\op{Nis}}\op{SO}(2d+1)]_{\mathbb{A}^1}\cong \left(\bm{\pi}^{\mathbb{A}^1}_{d-1}(\op{SO}(2d+1))\right)_{-d}$, the obstruction to lifting is simply the image under the morphism 
\[
\left(\bm{\pi}^{\mathbb{A}^1}_{d-1}(\op{SO}(2d+1))\right)_{-d}\to \left(\bm{\pi}^{\mathbb{A}^1}_{d-1}(\op{V}_{2,2d+1})\right)_{-d}
\]
induced by the projection $\op{SO}(2d+1)\to \op{V}_{2,2d+1}$. It can be checked on topological realization that this morphism is zero, hence the primary obstruction to splitting off a hyperbolic plane from a rank $2d+1$ form vanishes for the sphere $\op{Q}_{2d}$. 
\end{example}

\begin{example}
A specific example of a stably trivial quadratic form which doesn't split off a hyperbolic plane can also be obtained (but that's more a low-dimensional accident). Consider $\op{Q}_4$ over $\mathbb{R}$. By the results of \cite{hyperbolic-dim3}, the isometry classes of $\op{SO}(3)$-torsors are in bijection with $\op{GW}(F)\cong\op{H}^2(\op{Q}_4,\mathbf{K}^{\op{MW}}_2)$. The same is true for $\op{SO}(5)$-torsors. The natural stabilization map $\op{SO}(3)\to\op{SO}(5)$ induces the multiplication by 2 on homotopy sheaves, cf. \cite{hyperbolic-dim3}. Finally, the isometry classes of $\op{SO}(7)$-torsors are in bijection with $\mathbb{Z} \cong \op{H}^2(\op{Q}_4,\mathbf{K}^{\op{M}}_2)$, and the stabilization map $\op{SO}(5)\to\op{SO}(7)$ corresponds to the rank map $\dim\colon \op{GW}(F)\to\mathbb{Z}$. Now we can take an $\op{SO}(5)$-torsor over $\op{Q}_4$ corresponding to a class in $\op{GW}(\mathbb{R})$ of rank $0$ and odd signature. This torsor becomes trivial after adding a single hyperbolic plane, but it doesn't split off a hyperbolic plane because its invariant in $\op{GW}(F)$ isn't divisible by $2$. 
\end{example}

\begin{remark}
At this point, we have no precise information about the obstruction classes. An obvious guess is that the obstruction classes for $2d+1$-rank forms with $d\equiv 1\bmod 2$ are integral Stiefel--Whitney classes. In this case, the obstructions would vanish for stably trivial forms. However, looking at the conjectural structure of the first unstable $\mathbb{A}^1$-homotopy sheaf of $\op{SO}(2d+1)$, cf. the discussion in Appendix~\ref{sec:stabilization}, it's also conceivable that the obstruction classes for odd-rank forms vanish altogether. In any case, one would expect significantly stronger splitting results than those discussed in the present paper. Very likely, an investigation of the Chow--Witt rings of ${\op{B}}_{\op{Nis}}\op{SO}(2n+1)$ would be a relevant step for the identification of the obstruction class.
\end{remark}

\subsection{Consequences and examples: even-rank bundles}

Recall from Theorem~\ref{thm:euler2} that for a generically split quadratic form over a smooth affine scheme $X=\op{Spec} A$ of rank $2d$ the first obstruction to split off a hyperbolic plane lives in $\widetilde{\op{CH}}^d(X)\times \op{H}^d(X,\mathbf{W})$. We now want to discuss the relation between the Chow--Witt part of the obstruction class and the Euler class of quadratic forms as defined by Edidin and Graham in \cite{edidin:graham}. 

Let $F$ be a field of characteristic $\neq 2$, and let $V$ be an $n$-dimensional $F$-vector space. Equipping $V\oplus V^\vee$ with the quadratic form $\op{ev}(v+\phi)=\phi(v)$ given by evaluation, we can concretely realize the special orthogonal group $\op{SO}(2n)$ as the linear automorphisms of $V\oplus V^\vee$ preserving the evaluation form. If we choose a basis $e_1,\dots,e_n$ of $V$ with the corresponding dual basis $e_1^\vee,\dots,e_n^\vee$, then the evaluation form is given by 
\[
\op{ev}\colon \sum_{i=1}^n\left(\lambda_i e_i+\mu_i e_i^\vee\right)\mapsto \sum_{i=1}^n\lambda_i\mu_i.
\]
In the basis $e_i+e_i^\vee$, $e_i-e_i^\vee$ the evaluation form is given by a sum of squares of signature $(n,n)$.

Now we can write down an explicit model for the hyperbolic map $H\colon\op{GL}_n\to \op{SO}(2n)$: a linear automorphism $\alpha\colon V\to V$ in $\op{GL}_n$ maps to the corresponding automorphism $\alpha\oplus(\alpha^{-1})^\vee\colon V\oplus V^\vee\to V\oplus V^\vee$ in $\op{SO}(2n)$. For a unit-length vector $v\in V\oplus V^\vee$ (such as $e_1+e_1^\vee$), the stabilizer of $v$ in $\op{SO}(2n)$ is an orthogonal group $\op{SO}(2n-1)$ and the inclusion $\op{SO}(2n-1)\hookrightarrow\op{SO}(2n)$ is the usual stabilization morphism. 

\begin{lemma}
\label{lem:cart}
Let $F$ be a field of characteristic $\neq 2$, and let $V$ be an $n$-dimensional $F$-vector space. Then there is a cartesian square
\[
\xymatrix{
\op{GL}_{n-1}\ar[r] \ar[d] & \op{GL}_n \ar[d]^H \\
\op{SO}(2n-1)\ar[r] & \op{SO}(2n)
}
\]
where the horizontal morphisms are the usual stabilization morphisms. Moreover, the induced morphism on quotients $\op{GL}_n/\op{GL}_{n-1}\to \op{SO}(2n)/\op{SO}(2n-1)$ is an isomorphism. 
\end{lemma}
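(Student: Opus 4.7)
My plan has three steps: check that the square commutes, verify it is cartesian via a direct fiber computation, and then identify the induced morphism on quotients with the canonical isomorphism to $\op{Q}_{2n-1}$.

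For commutativity, the block-diagonal inclusion $\op{GL}_{n-1}\hookrightarrow\op{GL}_n$ realizes $\op{GL}_{n-1}$ as the simultaneous stabilizer of $e_1\in V$ and $e_1^\vee\in V^\vee$. Under $H$, an element $\beta\in\op{GL}_{n-1}$ is sent to $\beta\oplus(\beta^{-1})^\vee$, which fixes $e_1+e_1^\vee$ and therefore lies in $\op{SO}(2n-1)$, matching the stabilization along the bottom row of the square.

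For the cartesian property, I would compute the fiber product $\op{GL}_n\times_{\op{SO}(2n)}\op{SO}(2n-1)$ directly. If $\alpha\in\op{GL}_n$ satisfies $H(\alpha)(e_1+e_1^\vee)=e_1+e_1^\vee$, then because $V$ and $V^\vee$ are complementary and $H(\alpha)$ preserves this decomposition, we obtain $\alpha(e_1)=e_1$ (first column of the matrix is $(1,0,\dots,0)^{\op{t}}$) and $(\alpha^{-1})^\vee(e_1^\vee)=e_1^\vee$; evaluating the latter on each $e_j$ shows the first row of $\alpha$ is $(1,0,\dots,0)$. Hence $\alpha$ is block-diagonal of the required form and lies in $\op{GL}_{n-1}$.

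For the quotient statement, I would identify both sides with $\op{Q}_{2n-1}$ via orbit maps. The $\op{SO}(2n)$-orbit of $e_1+e_1^\vee$ yields the standard identification $\op{SO}(2n)/\op{SO}(2n-1)\cong\op{Q}_{2n-1}$ recalled in Section~\ref{sec:quadrics}. Composing with $H$, the $\op{GL}_n$-orbit map sends $\alpha$ to $\alpha(e_1)+(\alpha^{-1})^\vee(e_1^\vee)$, whose image lies in $\op{Q}_{2n-1}$ since the $(1,1)$-entry of $\alpha^{-1}\alpha$ equals $1$, giving exactly the defining relation $\sum X_iY_i=1$. Surjectivity of this orbit map is easy: given $(v,\phi)$ with $\phi(v)=1$, extend $v$ to a basis whose remaining vectors span $\ker(\phi)$ (which has dimension $n-1$ since $\phi\neq 0$) and take the corresponding change-of-basis matrix. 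Combined with the stabilizer computation from the cartesian step, this gives the isomorphism $\op{GL}_n/\op{GL}_{n-1}\xrightarrow{\sim}\op{SO}(2n)/\op{SO}(2n-1)$. The only subtlety worth flagging is making sure the quotients are taken in the correct (fppf, equivalently étale) topology so they are represented by the expected smooth affine schemes; this is automatic here because $\op{GL}_{n-1}$ and $\op{SO}(2n-1)$ are smooth and the orbit maps from the ambient groups are smooth and surjective onto smooth affine targets. Beyond that bookkeeping, no single step is genuinely hard.
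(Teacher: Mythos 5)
Your proof is correct and follows essentially the same route as the paper: the cartesian property is checked exactly as in the paper's proof, by noting that matrices in the image of $H$ preserve the decomposition $V\oplus V^\vee$, so fixing $e_1+e_1^\vee$ forces fixing $e_1$ and $e_1^\vee$ separately, which pins down the block-diagonal copy of $\op{GL}_{n-1}$. Your additional verification of the quotient isomorphism, via the orbit maps onto $\op{Q}_{2n-1}$ (the pairs $(v,\phi)$ with $\phi(v)=1$) together with the stabilizer computation, supplies a detail the paper leaves implicit and is a welcome, correct completion of the argument.
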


\begin{proof}
The first statement is simply the claim that $\op{GL}_{n-1}$ is the intersection of $\op{GL}_n$ and $\op{SO}(2n-1)$ inside $\op{SO}(2n)$. Note that matrices in the image of $H$ preserve the decomposition $V\oplus V^\vee$. If such a matrix preserves the vector $e_1+e_1^\vee$, then the original matrix in $\op{GL}(V)$ preserves the vector $e_1$. Moreover, because the matrix is orthogonal, it will also preserve the decomposition $V=\langle e_1\rangle\oplus \langle e_2,\dots,e_n\rangle$. This shows that the intersection is exactly $\op{GL}_{n-1}$, the endomorphisms of $\langle e_2,\dots,e_n\rangle$.
\end{proof}

Note that the quotient maps above are locally trivial in the Nisnevich topology and therefore give rise to fiber sequences, cf. \cite{gbundles2}. As a consequence of the above statement, we get a homotopy cartesian diagram of classifying spaces 
\[
\xymatrix{
{\op{B}}\op{GL}_{n-1}\ar[r] \ar[d] & {\op{B}}\op{GL}_n \ar[d] \\
{\op{B}}_{\op{Nis}}\op{SO}(2n-1)\ar[r] & {\op{B}}_{\op{Nis}}\op{SO}(2n).
}
\]

\begin{proposition}
\label{prop:special2}
Let $F$ be a field of characteristic unequal to $2$, let $X=\op{Spec} A$ be a smooth affine variety of dimension $d$ over $F$ and let $(\mathscr{P},\phi)$ be a generically split quadratic form over $A$ of rank $2d$ admitting a spin lift. 
\begin{enumerate}
\item 
Under the natural projection 
\[
\op{H}^d(X;\bm{\pi}^{\mathbb{A}^1}_{d-1}(\op{V}_{2,2d}))\cong \widetilde{\op{CH}}^d(X)\times \op{H}^d(X;\mathbf{W})\xrightarrow{\op{pr}_1} \widetilde{\op{CH}}^d(X)\to\op{CH}^d(X), 
\]
the obstruction class from Theorem~\ref{thm:euler2} maps to the Edidin--Graham Euler class of \cite{edidin:graham}. 
\item
If $F$ is quadratically closed, then $(\mathscr{P},\phi)$ splits off a hyperbolic plane if and only if its Edidin--Graham Euler class is trivial.
\end{enumerate} 
\end{proposition}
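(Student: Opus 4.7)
The plan is to treat the two parts of the proposition by unwinding the product decomposition of the obstruction group from Theorem~\ref{thm:euler2} and leveraging the homotopy cartesian square supplied by Lemma~\ref{lem:cart}.

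For part (1), the factor $\widetilde{\op{CH}}^d(X)$ arises because the fiber sequence $\op{Q}_{2d-2}\to\op{V}_{2,2d}\to\op{Q}_{2d-1}$ used in Proposition~\ref{prop:stiefel2} admits a section, so projection onto this factor captures exactly the obstruction to lifting the classifying map $X\to{\op{B}}_{\op{Nis}}\op{SO}(2d)$ through the intermediate ${\op{B}}_{\op{Nis}}\op{SO}(2d-1)$. This intermediate obstruction is the pullback of a universal $k$-invariant $\kappa_{\op{SO}}\in\op{H}^d({\op{B}}_{\op{Nis}}\op{SO}(2d),\mathbf{K}^{\op{MW}}_d)$ for the fiber sequence $\op{Q}_{2d-1}\to{\op{B}}_{\op{Nis}}\op{SO}(2d-1)\to{\op{B}}_{\op{Nis}}\op{SO}(2d)$. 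I would then use Lemma~\ref{lem:cart} to compute $({\op{B}H})^\ast\kappa_{\op{SO}}$: the homotopy cartesian square identifies it with the first $k$-invariant of $\mathbb{A}^d\setminus\{0\}\to{\op{B}}\op{GL}_{d-1}\to{\op{B}}\op{GL}_d$, which is Morel's Chow--Witt Euler class of the universal rank $d$ bundle. Under $\widetilde{\op{CH}}^d\to\op{CH}^d$ this reduces to the top Chern class $c_d$, and since Edidin--Graham characterise their Euler class of a rank $2d$ form by exactly this pullback property through $\op{B}H$, the projection of our obstruction coincides with theirs.

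For part (2), I would show that under quadratic closedness both obstruction factors collapse to the Edidin--Graham class. First, from the short exact sequence $0\to\mathbf{I}^{d+1}\to\mathbf{K}^{\op{MW}}_d\to\mathbf{K}^{\op{M}}_d\to 0$ and Nisnevich cohomological dimension $\leq d$, the kernel of $\widetilde{\op{CH}}^d(X)\to\op{CH}^d(X)$ is a quotient of $\op{H}^d(X,\mathbf{I}^{d+1})$; but the Gersten complex for $\mathbf{I}^{d+1}$ at position $d$ is $\bigoplus_{y\in X^{(d)}}\mathbf{I}(\kappa(y))$, which vanishes because closed-point residue fields are quadratically closed. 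Hence $\widetilde{\op{CH}}^d(X)\cong\op{CH}^d(X)$, and by (1) the first factor of the obstruction is the Edidin--Graham Euler class. Second, using the presentation $\op{H}^d(X,\mathbf{W})\cong\op{coker}(\op{CH}^{d-1}(X)\to\op{H}^d(X,\mathbf{I}^d))$ from Theorem~\ref{thm:euler2}, it suffices to show $\op{H}^d(X,\mathbf{I}^d)=0$. The sequence $0\to\mathbf{I}^{d+1}\to\mathbf{I}^d\to\mathbf{K}^{\op{M}}_d/2\to 0$ together with the vanishing just shown reduces this to $\op{Ch}^d(X)=0$, which holds by Roitman's theorem exactly as in the proof of Proposition~\ref{prop:special1}. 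Combining both factors, the obstruction of Theorem~\ref{thm:euler2} vanishes if and only if the Edidin--Graham Euler class does.

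The main obstacle will be the identification in part (1) of the abstract obstruction-theoretic $k$-invariant $\kappa_{\op{SO}}$ with Edidin--Graham's concrete Euler class from \cite{edidin:graham}. The cartesian square pins down the pullback of both classes along $\op{B}H$ to the top Chern class, but $({\op{B}H})^\ast$ is not generally injective on cohomology, so the matching must be anchored directly in Edidin--Graham's construction via equivariant intersection theory. This comparison between universal classes in $\op{H}^d({\op{B}}_{\op{Nis}}\op{SO}(2d),\mathbf{K}^{\op{MW}}_d)$ and their Chow-theoretic Euler class will require careful bookkeeping, whereas the vanishing arguments in part (2) are straightforward Gersten-complex manipulations.
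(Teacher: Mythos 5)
Your outline coincides with the paper's (Lemma~\ref{lem:cart} is the key input for (1); part (2) reduces to $\widetilde{\op{CH}}^d(X)\cong\op{CH}^d(X)$ and $\op{H}^d_{\op{Nis}}(X,\mathbf{W})=0$), but both parts as written stop short. In (1), the step you yourself flag as the ``main obstacle'' is the actual content of the statement, and you leave it open: agreement of pullbacks along ${\op{B}}H$ does not identify classes, and you offer no substitute. The paper does not compare universal classes on ${\op{B}}_{\op{Nis}}\op{SO}(2d)$ at all; it argues at the level of bundles. For a hyperbolic form $(\mathscr{V}\oplus\mathscr{V}^\vee,\op{ev})$, the homotopy cartesian square following Lemma~\ref{lem:cart} identifies the obstruction to reducing the structure group from $\op{SO}(2d)$ to $\op{SO}(2d-1)$ with the obstruction to splitting a trivial line off $\mathscr{V}$; by the comparison of Euler classes in \cite{AsokFaselEuler} this is, up to a unit, the Chow--Witt Euler class of $\mathscr{V}$, hence projects to $\pm\op{c}_d(\mathscr{V})$ in $\op{CH}^d(X)$; and the characterization in \cite[Section 4, Definition 1 and Proposition 2]{edidin:graham} says the Edidin--Graham class of such a form is $\pm\op{c}_d(\mathscr{V})$ as well. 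So the identification is anchored exactly where you suspected it must be, namely in Edidin--Graham's own construction, and your proof needs to invoke that characterization (and the hyperbolic-bundle verification via Lemma~\ref{lem:cart} and \cite{AsokFaselEuler}) rather than defer it; without this, part (1) is not established.

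In (2), the Gersten manipulations are correct, but two justifications overreach the hypothesis ``quadratically closed''. The vanishing of the degree-$d$ terms of the Rost--Schmid complex of $\mathbf{I}^{d+1}$ requires $\mathbf{I}(\kappa(y))=0$ for all closed points $y$, i.e.\ that the residue fields $\kappa(y)$ --- finite extensions of $F$ --- are themselves quadratically closed; this is not a formal consequence of $F$ being quadratically closed, and you assert it without argument. Likewise, Roitman's theorem, which you import ``exactly as in the proof of Proposition~\ref{prop:special1}'', is a statement over algebraically closed fields (and Proposition~\ref{prop:special1} is stated for such $F$); for merely quadratically closed $F$ the $2$-divisibility of $\op{CH}_0(X)$ needs a separate argument. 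Over an algebraically closed field both points are unproblematic and your argument then matches the paper's, which likewise kills the $\mathbf{W}$-factor via a surjection from $\op{Ch}^d(X)=0$ and identifies $\widetilde{\op{CH}}^d(X)$ with $\op{CH}^d(X)$; to get the statement at the stated generality you must either justify these two inputs for the fields you allow or restrict to the algebraically closed case, as in the corollary stated in the introduction.
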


\begin{proof}
(1) We use the characterization of Euler classes for quadratic forms in \cite[Section 4, Definition 1 and Proposition 2]{edidin:graham}: if $\mathscr{V}$ is a rank $d$ vector bundle on a smooth scheme $X$ and  $\mathscr{P}:=\mathscr{V}\oplus\mathscr{V}^\vee$ the associated hyperbolic bundle with the evaluation form, then the Euler class of $(\mathscr{P},\op{ev})$ is, up to sign, the top Chern class $\op{c}_n(\mathscr{V})$. It thus suffices to prove that the obstruction class for a hyperbolic bundle reduces to the top Chern class. Now if $\mathscr{V}$ is a rank $d$ vector bundle on a smooth scheme $X$, the obstruction to split off a trivial line from $\mathscr{V}$ is the same as the obstruction for reducing the structure group for the hyperbolic bundle $(\mathscr{V}\oplus\mathscr{V}^\vee,\op{ev})$ from $\op{SO}(2d)$ to $\op{SO}(2d-1)$. This follows directly from the homotopy cartesian square of classifying spaces after Lemma~\ref{lem:cart}. Consequently, in the case of hyperbolic bundles as above, the natural projection $\op{H}^d(X;\bm{\pi}^{\mathbb{A}^1}_{d-1}(\op{V}_{2,2d}))\cong \widetilde{\op{CH}}^d(X)\times \op{H}^d(X;\mathbf{W})\xrightarrow{\op{pr}_1} \widetilde{\op{CH}}^d(X)$ maps the obstruction class of Theorem~\ref{thm:euler2} to the obstruction class for splitting off a line from $\mathscr{V}$. By \cite{AsokFaselEuler}, this is, up to a unit in the Grothendieck--Witt ring $\op{GW}(F)$, the Euler class of $\mathscr{V}$. Now under the projection $\widetilde{\op{CH}}^d(X)\to\op{CH}^d(X)$, the Euler class maps to the top Chern class, up to sign. Together with the above characterization of the Edidin--Graham Euler class, this proves the claim.

(2) This follows from Theorem~\ref{thm:euler2} and (1). Since $F$ is quadratically closed, the natural projection  $\widetilde{\op{CH}}^d(X)\xrightarrow{\cong}\op{CH}^d(X)$ is an isomorphism. The group $\op{H}^d_{\op{Nis}}(X,\mathbf{W})$ is also  trivial since we have an exact sequence 
\[
\op{CH}^{d-1}(X)\xrightarrow{\beta}\op{H}^d_{\op{Nis}}(X,\mathbf{I}^d)\to \op{H}^d_{\op{Nis}}(X,\mathbf{W})\to 0,
\] 
and $0=\op{Ch}^d(X)/2\twoheadrightarrow\op{H}^d_{\op{Nis}}(X,\mathbf{I}^d)$. In particular, the $\mathbf{W}$-cohomological contribution to the obstruction class vanishes. Then (1) shows that the obstruction class in $\widetilde{\op{CH}}^d(X)$ maps to the Edidin--Graham Euler class (up to sign) in $\op{CH}^d(X)$, which proves the claim.
\end{proof}

\begin{remark}
The above result actually means that the obstruction class reduces to the Chow--Witt-theoretic Euler class of a maximal-rank isotropic subbundle, whenever it exists. However, it is not clear that this already uniquely determines the class as a characteristic class in the Chow--Witt ring. Further analysis of the Chow--Witt rings of classifying spaces ${\op{B}}_{\op{Nis}}\op{SO}(2n)$ would be required for that.
\end{remark}

Here is a final example of the existence of quadratic bundles whose underlying vector bundles are trivial. 

\begin{theorem}
Let $F$ be a perfect field of characteristic $\neq 4$. For any $m\geq 1$, there exists an oriented quadratic form of rank $8m$ over the quadric $\op{Q}_{8m+1}$ which is generically trivial, stably nontrivial, and whose underlying vector bundle is trivial. 
\end{theorem}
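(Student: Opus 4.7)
The plan is to exhibit the form via an $\mathbb{A}^1$-homotopy class in $[\op{Q}_{8m+1},\op{B}_{\op{Nis}}\op{Spin}(8m)]_{\mathbb{A}^1}$ whose image in $[\op{Q}_{8m+1},\op{B}_{\op{Nis}}\op{SL}_{8m}]_{\mathbb{A}^1}$ vanishes (making the underlying vector bundle trivial) while its image in the stable group $[\op{Q}_{8m+1},\op{B}_{\op{Nis}}\op{Spin}]_{\mathbb{A}^1}$ is non-trivial (making the form stably non-trivial). The rank $8m$ is dictated by the $8$-fold Bott periodicity of motivic Hermitian K-theory, and I take the characteristic condition to read $\neq 2$ as in the rest of the paper.

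First I would verify that the underlying vector bundle is automatically trivial in this rank-dimension range. By Theorem~\ref{thm:representability} and the $\mathbb{A}^1$-equivalence $\op{Q}_{8m+1}\simeq\op{S}^{8m+1,4m+1}$, adjunction identifies the set of generically trivial $\op{SL}_{8m}$-torsors on $\op{Q}_{8m+1}$ with the $(4m+1)$-fold $\mathbb{G}_{\op{m}}$-contraction of $\bm{\pi}^{\mathbb{A}^1}_{4m-1}(\op{SL}_{8m})$ evaluated at $F$. The linear Stiefel fibrations $\mathbb{A}^n\setminus\{0\}\to\op{B}_{\op{Nis}}\op{SL}_{n-1}\to\op{B}_{\op{Nis}}\op{SL}_n$ have $\mathbb{A}^1$-$(n-2)$-connected fibers, so iterated stabilization identifies this with $\widetilde{K}_0(\op{Q}_{8m+1})$ (using also $\op{Pic}(\op{Q}_{8m+1})=0$). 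The localization sequence for the inclusion $\{0\}\hookrightarrow\mathbb{A}^{4m+1}$ together with the Koszul-type vanishing $[\mathcal{O}_0]=0$ in $K_0(\mathbb{A}^{4m+1})$ then gives $\widetilde{K}_0(\op{Q}_{8m+1})=0$. Hence every rationally trivial rank-$8m$ bundle with trivial determinant on $\op{Q}_{8m+1}$ is trivial, and the problem reduces to producing any spin-level class with non-zero stable image.

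Next, by Propositions~\ref{prop:stiefel1} and \ref{prop:stiefel2}, the fibers $\op{V}_{2,8m+2k}$ of the orthogonal stabilization tower are $\mathbb{A}^1$-$(4m+k-2)$-connected, and a diagram chase through this tower yields a surjection
\[
[\op{Q}_{8m+1},\op{B}_{\op{Nis}}\op{Spin}(8m)]_{\mathbb{A}^1}\twoheadrightarrow [\op{Q}_{8m+1},\op{B}_{\op{Nis}}\op{Spin}]_{\mathbb{A}^1}.
\]
The right-hand side is computed via motivic Hermitian K-theory (Schlichting--Tripathi, Panin--Walter). Its $8$-fold motivic Bott periodicity reduces the reduced Grothendieck--Witt group of $\op{S}^{8m+1,4m+1}$ to that of $\op{S}^{1,1}$, which contains a non-zero summand isomorphic to the Witt group $\op{W}(F)$ over any characteristic-$\neq 2$ field. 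Choose $\alpha\in [\op{Q}_{8m+1},\op{B}_{\op{Nis}}\op{Spin}(8m)]_{\mathbb{A}^1}$ to be a pre-image of a non-zero stable class. It classifies an oriented rank-$8m$ form $(\mathscr{P},\phi)$ on $\op{Q}_{8m+1}$ which is generically trivial by construction, has trivial underlying vector bundle by the preceding step, and is stably non-trivial since the chosen stable image cannot be killed by adding hyperbolic planes. The main technical obstacle is the precise identification of a non-zero stable Hermitian K-theory class at the exact motivic bidegree of $\op{S}^{8m+1,4m+1}$; the rank-$8m$ condition enters precisely here through the $8$-fold Bott periodicity, and the computation requires careful bookkeeping in the relevant Grothendieck--Witt spectra.
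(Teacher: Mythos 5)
Your proposal is correct and takes essentially the same route as the paper: produce the class by surjecting the unstable set $[\op{Q}_{8m+1},\op{B}_{\op{Nis}}\op{SO}(8m)]_{\mathbb{A}^1}$ (your use of $\op{Spin}$ is harmless here, since $\op{Q}_{8m+1}$ is highly $\mathbb{A}^1$-connected) onto the stable Hermitian K-theory group, which in this bidegree is $\op{W}(F)$, and then kill the underlying vector bundle by a stable-range vanishing of reduced algebraic $K_0$. The paper quotes the Schlichting--Tripathi identification $[\op{Q}_{2n+1},\op{B}_{\op{Nis}}\op{SO}(\infty)]_{\mathbb{A}^1}\cong\op{GW}^{-n-1}_{-1}(F)\cong\op{W}^{-n}(F)$ directly instead of your $(8,4)$-periodicity reduction to $\op{S}^{1,1}$, but this is the same computation, so the ``technical obstacle'' you flag is already settled in the literature.
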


\begin{proof}
By \cite{schlichting:tripathi}, we have an identification 
\[
[\op{Q}_{2n+1},{\op{B}}_{\op{Nis}}\op{SO}(\infty)]_{\mathscr{H}(F)}\cong (\mathbf{GW}^0_n)_{-n-1}(F).
\]
The result is $\op{GW}^{-n-1}_{-1}(F)\cong \op{W}^{-n}(F)$, by the identification of negative higher Grothendieck--Witt groups and Balmer's triangular Witt groups. So there are no maps unless $n\equiv 0\bmod 4$, and in this case we have $[\op{Q}_{2n+1},{\op{B}}_{\op{Nis}}\op{SO}(\infty)]_{\mathscr{H}(F)}\cong\op{W}(F)$. 
By the stabilization results, we have 
\[
[\op{Q}_{2n+1},{\op{B}}_{\op{Nis}}\op{SO}(2n)]_{\mathscr{H}(F)}\to [\op{Q}_{2n+1},{\op{B}}_{\op{Nis}}\op{SO}(\infty)]_{\mathscr{H}(F)}\to [\op{S}^{n-1}\wedge\mathbb{G}_{\op{m}}^{\wedge n+1},\op{Q}_{2n}]=0
\]
in particular we have a surjection $[\op{Q}_{2n+1},{\op{B}}_{\op{Nis}}\op{SO}(2n)]_{\mathscr{H}(F)}\twoheadrightarrow\op{W}(F)$ so that we can choose a nontrivial map $\op{Q}_{2n+1}\to{\op{B}}_{\op{Nis}}\op{SO}(2n)$ detected on $\op{W}(F)$. This corresponds to an oriented quadratic form of rank $2n$ on $\op{Q}_{2n+1}$ which is generically trivial but stably nontrivial. Now consider the composition 
\[
\op{Q}_{2n+1}\to{\op{B}}_{\op{Nis}}\op{SO}(2n)\to{\op{B}}\op{GL}_{2n},
\]
which corresponds to taking the underlying vector bundle. This is way inside the stable range, so we have $[\op{Q}_{2n+1},{\op{B}}\op{GL}_{2n}]\cong (\mathbf{K}^{\op{Q}}_n)_{-n-1}=0$, i.e., the quadratic form we have ``constructed'' has trivial underlying vector bundle. 
\end{proof}

\begin{remark}
This is an algebraic version of the example in the answer of David Chataur to MathOverflow question 112764 of a real vector bundle which is stably nontrivial but has trivial characteristic classes. The above example would have all the characteristic classes in $\op{CH}^\bullet({\op{B}}\op{O}(2n))$ trivial: either use the fact that these classes are generated by Chern classes of the underlying vector bundle or use the fact that $\op{CH}^{>0}(\op{Q}_{2n+1})=0$ since $\op{Q}_{2n+1}\cong\mathbb{A}^{n+1}\setminus\{0\}$. However, the triviality of the underlying vector bundle also implies that Chow--Witt characteristic classes induced from ${\op{B}}_{\op{Nis}}\op{SO}(2n)\to {\op{B}}\op{GL}_{2n}$ would also all be trivial. At this point, it is not clear if all characteristic classes in $\widetilde{\op{CH}}^\bullet({\op{B}}_{\op{Nis}}\op{O}(2n))$ are induced from the underlying vector bundle. As mentioned, the questions of Chow--Witt characteristic classes of quadratic forms will be discussed elsewhere. 
\end{remark}

\appendix
\section{On stabilization sequences for orthogonal groups}
\label{sec:stabilization}

In this section, we discuss the stabilization sequences for orthogonal groups. The results concerning the $\mathbb{A}^1$-homotopy sheaves of orthogonal Stiefel varieties are used to deduce some statements concerning unstable $\mathbb{A}^1$-homotopy sheaves of the orthogonal groups. 

Recall from Section~\ref{sec:prelims} that there are $\mathbb{A}^1$-fiber sequences 
\[
\op{V}_{2,n}\to {\op{B}}_{\op{Nis}}\op{SO}(n-2)\to {\op{B}}_{\op{Nis}}\op{SO}(n). 
\]
These induce corresponding long exact sequences of $\mathbb{A}^1$-homotopy sheaves 
\[
\cdots\to \bm{\pi}^{\mathbb{A}^1}_i(\op{V}_{2,n}) \to \bm{\pi}^{\mathbb{A}^1}_i({\op{B}}_{\op{Nis}}\op{SO}(n-2))\to \bm{\pi}^{\mathbb{A}^1}_i({\op{B}}_{\op{Nis}}\op{SO}(n)) \to \bm{\pi}^{\mathbb{A}^1}_{i-1}(\op{V}_{2,n}) \to\cdots
\]
The goal of the section is to provide some more information concerning the first unstable pieces of these sequences.

\subsection{The stable range}

We begin with a discussion of the stable range.

\begin{proposition}
\label{prop:stable}
Let $F$ be a perfect field of characteristic $\neq 2$. 
\begin{enumerate}
\item The natural morphism $\bm{\pi}^{\mathbb{A}^1}_{i}(\op{SO}(2n+1))\to \bm{\pi}^{\mathbb{A}^1}_{i}(\op{SO}(\infty))\cong \mathbf{GW}^0_{i+1}$ is an isomorphism for $1\leq i\leq n-2$. 
\item The natural morphism $\bm{\pi}^{\mathbb{A}^1}_{i}(\op{SO}(2n))\to \bm{\pi}^{\mathbb{A}^1}_{i}(\op{SO}(\infty))\cong \mathbf{GW}^0_{i+1}$ is an isomorphism for $1\leq i\leq n-2$. 
\end{enumerate}
\end{proposition}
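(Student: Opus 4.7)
The plan is to combine the long exact $\mathbb{A}^1$-homotopy sequence coming from Proposition~\ref{prop:stabil} with the connectivity bounds for orthogonal Stiefel varieties established in Propositions~\ref{prop:stiefel1} and~\ref{prop:stiefel2}, and then pass to the stable colimit. I would treat both items (1) and (2) in parallel, since the same argument applies in both cases once the correct Stiefel variety is identified.

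First, for a single stabilization step, I would use the fact that shifting the fiber sequence $\op{V}_{2,m+2}\to{\op{B}}_{\op{Nis}}\op{SO}(m)\to{\op{B}}_{\op{Nis}}\op{SO}(m+2)$ once to the left yields an $\mathbb{A}^1$-fiber sequence $\op{SO}(m)\to\op{SO}(m+2)\to\op{V}_{2,m+2}$ (alternatively, this comes directly from the Zariski-locally trivial $\op{SO}(m)$-torsor $\op{SO}(m+2)\to\op{V}_{2,m+2}$). The associated long exact sequence
\[
\bm{\pi}^{\mathbb{A}^1}_{i+1}(\op{V}_{2,m+2})\to\bm{\pi}^{\mathbb{A}^1}_i(\op{SO}(m))\to\bm{\pi}^{\mathbb{A}^1}_i(\op{SO}(m+2))\to\bm{\pi}^{\mathbb{A}^1}_i(\op{V}_{2,m+2})
\]
shows that the stabilization morphism is an isomorphism on $\bm{\pi}^{\mathbb{A}^1}_i$ whenever the fiber $\op{V}_{2,m+2}$ is $\mathbb{A}^1$-$(i+1)$-connected. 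Both $\op{V}_{2,2d+1}$ and $\op{V}_{2,2d}$ are $\mathbb{A}^1$-$(d-2)$-connected by Propositions~\ref{prop:stiefel1} and~\ref{prop:stiefel2}. Applied to $m=2n+1$ (so $m+2=2(n+1)+1$, hence $d=n+1$) and to $m=2n$ (so $m+2=2(n+1)$, again $d=n+1$), both single-step stabilizations yield an isomorphism in the range $i+1\leq n-1$, i.e.\ $i\leq n-2$.

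Second, I would iterate along the tower $\op{SO}(m)\to\op{SO}(m+2)\to\op{SO}(m+4)\to\cdots$. Every subsequent step has a Stiefel variety fiber of strictly higher connectivity, so the iso-range $i\leq n-2$ from the first step is preserved throughout. Since $\mathbb{A}^1$-homotopy sheaves commute with filtered colimits of simplicial sheaves, the colimit identifies $\bm{\pi}^{\mathbb{A}^1}_i(\op{SO}(2n+1))$ (resp.\ $\bm{\pi}^{\mathbb{A}^1}_i(\op{SO}(2n))$) with $\bm{\pi}^{\mathbb{A}^1}_i(\op{SO}(\infty))$ for $i\leq n-2$. The final identification $\bm{\pi}^{\mathbb{A}^1}_i(\op{SO}(\infty))\cong\mathbf{GW}^0_{i+1}$ is then a standard consequence of Schlichting's Hermitian K-theory machinery, as used via \cite{schlichting:tripathi} in the proof of the last theorem of Section~\ref{sec:splitting}. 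The main obstacle is purely bookkeeping: one must line up the loop-space shift (turning $\bm{\pi}_{i+1}$ of a ${\op{B}}$-space into $\bm{\pi}_i$ of the group) against the connectivity bound of the Stiefel fiber; a minor subtlety at $i=1$, where $\bm{\pi}^{\mathbb{A}^1}_1(\op{SO}(m))$ is a priori only a sheaf of (possibly nonabelian) groups, is defused by the $\mathbb{A}^1$-simple-connectivity of $\op{V}_{2,m+2}$ in the relevant range.
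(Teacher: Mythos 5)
Your proposal is correct and follows essentially the same route as the paper: a single stabilization step is handled via the long exact sequence of the fiber sequence with Stiefel fiber $\op{V}_{2,m+2}$ (equivalently the sequence from Proposition~\ref{prop:stabil}), the connectivity bounds of Propositions~\ref{prop:stiefel1} and~\ref{prop:stiefel2} give the isomorphism range $i\leq n-2$, and the passage to $\op{SO}(\infty)$ follows since subsequent steps only improve connectivity. You merely spell out the colimit bookkeeping and the identification $\bm{\pi}^{\mathbb{A}^1}_i(\op{SO}(\infty))\cong\mathbf{GW}^0_{i+1}$ via \cite{schlichting:tripathi} in more detail than the paper does.
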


\begin{proof}
(1) The relevant piece of the stabilization sequence is
\[
 \bm{\pi}^{\mathbb{A}^1}_{i+1}(\op{V}_{2,2n+3}) \to \bm{\pi}^{\mathbb{A}^1}_{i}(\op{SO}(2n+1))\to \bm{\pi}^{\mathbb{A}^1}_{i}(\op{SO}(2n+3)) \to \bm{\pi}^{\mathbb{A}^1}_{i}(\op{V}_{2,2n+3}) 
\]
By Proposition~\ref{prop:stiefel1}, $\op{V}_{2,2n+3}$ is $\mathbb{A}^1$-$(n-1)$-connected. In particular, the stabilization morphisms are isomorphisms for $i+1\leq n-1$ which is exactly the range claimed.

(2) Similar argument. By Proposition~\ref{prop:stiefel2}, $\op{V}_{2,2n+2}$ is $\mathbb{A}^1$-$(n-1)$-connected and therefore the stabilization morphisms are isomorphisms for $i+1\leq n-1$. 
\end{proof}

\begin{remark}
The $\mathbb{A}^1$-homotopy sheaves of ${\op{B}}_{\op{Nis}}\op{SO}(\infty)$ are given by the higher Grothendieck--Witt sheaves $\mathbf{GW}^0_n$, cf. \cite{schlichting:tripathi}. Over algebraically closed fields, this reproduces the well-known classical homotopy groups of ${\op{B}}\op{O}$ as follows: for the odd-dimensional quadrics, we have 
\[
[\op{Q}_{2n-1},{\op{B}}_{\op{Nis}}\op{SO}]_{\mathscr{H}(F)}\cong\op{GW}^{-n}_{-1}(F)\cong \op{W}^{1-n}(F).
\]
By the usual identification of Balmer's triangular Witt groups, $\op{W}^{1-n}(F)=0$ for $n\not\equiv 1\bmod 4$. For $n\equiv 1\bmod 4$, these agree with the Witt groups of the field. In particular, over algebraically closed fields, this reproduces the classical pattern $\mathbb{Z}/2\mathbb{Z},0,0,0$ in the odd-degree homotopy groups of ${\op{B}}\op{O}$. Similarly, for the even-dimensional quadrics we have 
\[
[\op{Q}_{2n},{\op{B}}_{\op{Nis}}\op{SO}]_{\mathscr{H}(F)}\cong\op{GW}^{-n}_{0}(F)
\]
By \cite{schlichting}, the groups $\op{GW}^i_0(k)$ are identified with the Grothendieck--Witt groups of schemes as defined by Balmer and Walter \cite{walter}. For fields, \cite{walter} provides an identification
\[
\op{GW}^i_0(F)=\left\{\begin{array}{ll}
\op{GW}(F) & i\equiv 0\bmod 4\\
0 & i\equiv 1\bmod 4\\
\mathbb{Z} & i\equiv 2\bmod 4\\
\mathbb{Z}/2\mathbb{Z} & i\equiv 3\bmod 4\end{array}\right.
\]
Over algebraically closed fields, this reproduces the classical pattern $\mathbb{Z},\mathbb{Z}/2\mathbb{Z},\mathbb{Z},0$ in the even-degree homotopy groups of ${\op{B}}\op{O}$.\footnote{Note that the statements in \cite{schlichting} or \cite{schlichting:tripathi} refer to the \'etale classifying spaces while our interest here is in the Nisnevich classifying spaces. These classifying spaces are not $\mathbb{A}^1$-weakly equivalent (because not all quadratic forms are rationally split). However, the differences is in the connected components, both spaces have $\op{S}^1$-loop space equivalent to $\op{O}(\infty)$ and hence the higher homotopy groups of the connected component of the trivial torsor agree.} 
\end{remark}

\subsection{Stabilization in the \texorpdfstring{$B_n$}{Bn}-series}

We begin with the stabilization morphism $\op{SO}(2n-1)\to\op{SO}(2n+1)$. The relevant piece of the long exact $\mathbb{A}^1$-homotopy sequence is the following: 
\begin{eqnarray*}
\bm{\pi}^{\mathbb{A}^1}_n(\op{V}_{2,2n+1})&\to& \bm{\pi}^{\mathbb{A}^1}_{n-1}(\op{SO}(2n-1))\to \bm{\pi}^{\mathbb{A}^1}_{n-1}(\op{SO}(2n+1))\to\\\to  \bm{\pi}^{\mathbb{A}^1}_{n-1}(\op{V}_{2,2n+1})&\to& \bm{\pi}^{\mathbb{A}^1}_{n-2}(\op{SO}(2n-1))\to \bm{\pi}^{\mathbb{A}^1}_{n-2}(\op{SO}(2n+1))\to 0
\end{eqnarray*}

By the above discussion of the stable range in Proposition~\ref{prop:stable}, we have an identification $\bm{\pi}^{\mathbb{A}^1}_{n-2}(\op{SO}(2n+1))\cong\mathbf{GW}^0_{n-1}$, and  $\bm{\pi}^{\mathbb{A}^1}_{n-1}(\op{V}_{2,2n+1})$ has been determined in Proposition~\ref{prop:stiefel1}. 

The goal is to determine as much of the structure of the first unstable $\mathbb{A}^1$-homotopy sheaf $\bm{\pi}^{\mathbb{A}^1}_{n-2}(\op{SO}(2n-1))$ as we can and then compare the results with the topological situation. To determine the structure of $\bm{\pi}^{\mathbb{A}^1}_{n-2}(\op{SO}(2n-1))$, we need to understand the image of $\bm{\pi}^{\mathbb{A}^1}_{n-1}(\op{V}_{2,2n+1})$, i.e., we need to study the morphism $\bm{\pi}^{\mathbb{A}^1}_{n-1}(\op{SO}(2n+1))\to \bm{\pi}^{\mathbb{A}^1}_{n-1}(\op{V}_{2,2n+1})$. Unfortunately, this is again an unstable $\mathbb{A}^1$-sheaf of the same type. So we need to combine the stabilization sequences for $\op{SO}(2n-1)\to\op{SO}(2n+1)$ and $\op{SO}(2n+1)\to \op{SO}(2n+3)$, as follows: 
\[
\xymatrix{
\bm{\pi}^{\mathbb{A}^1}_{n}(\op{V}_{2,2n+3}) \ar[d] \ar[rd]^a\\
\bm{\pi}^{\mathbb{A}^1}_{n-1}(\op{SO}(2n+1)) \ar[r] \ar[d] & \bm{\pi}^{\mathbb{A}^1}_{n-1}(\op{V}_{2,2n+1}) \ar[r] & \bm{\pi}^{\mathbb{A}^1}_{n-2}(\op{SO}(2n-1)) % \ar[r] &  \bm{\pi}^{\mathbb{A}^1}_{n-2}(\op{SO}(2n)) \ar[r] &0 
\\
\bm{\pi}^{\mathbb{A}^1}_{n-1}(\op{SO}(2n+3)) \ar[r]_\cong & \mathbf{GW}^0_{n} \ar@{.>}[u]
}
\]

\begin{proposition}
\label{prop:unstab1}
Let $F$ be a field of characteristic $\neq 2$. Then there are exact sequences of strictly $\mathbb{A}^1$-invariant sheaves of abelian groups: 
\[
\mathbf{GW}^0_n\to \bm{\pi}^{\mathbb{A}^1}_{n-1}(\op{V}_{2,2n+1})\to \bm{\pi}^{\mathbb{A}^1}_{n-2}(\op{SO}(2n-1))\to \mathbf{GW}^0_{n-1}\to 0.
\]
\end{proposition}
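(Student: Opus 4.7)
The plan is to splice together the long exact sequences of $\mathbb{A}^1$-homotopy sheaves associated to the two consecutive stabilization fiber sequences
\[
\op{V}_{2,2n+1} \to \op{B}_{\op{Nis}}\op{SO}(2n-1) \to \op{B}_{\op{Nis}}\op{SO}(2n+1)
\]
and
\[
\op{V}_{2,2n+3} \to \op{B}_{\op{Nis}}\op{SO}(2n+1) \to \op{B}_{\op{Nis}}\op{SO}(2n+3),
\]
truncating via the connectivity of the orthogonal Stiefel varieties from Proposition~\ref{prop:stiefel1} and replacing the appearing unstable $\op{SO}$-homotopy sheaves by $\mathbf{GW}^0$-sheaves via the stable range of Proposition~\ref{prop:stable}.

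Concretely, the first fiber sequence combined with the $\mathbb{A}^1$-$(n-2)$-connectedness of $\op{V}_{2,2n+1}$ yields the exact sequence
\[
\bm{\pi}^{\mathbb{A}^1}_{n-1}(\op{SO}(2n+1)) \to \bm{\pi}^{\mathbb{A}^1}_{n-1}(\op{V}_{2,2n+1}) \to \bm{\pi}^{\mathbb{A}^1}_{n-2}(\op{SO}(2n-1)) \to \bm{\pi}^{\mathbb{A}^1}_{n-2}(\op{SO}(2n+1)) \to 0,
\]
where the last term is identified with $\mathbf{GW}^0_{n-1}$ via Proposition~\ref{prop:stable}; this gives the right-hand portion of the claimed sequence. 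The second fiber sequence and the $\mathbb{A}^1$-$(n-1)$-connectedness of $\op{V}_{2,2n+3}$ analogously produce the exact sequence
\[
\bm{\pi}^{\mathbb{A}^1}_n(\op{V}_{2,2n+3}) \xrightarrow{\partial} \bm{\pi}^{\mathbb{A}^1}_{n-1}(\op{SO}(2n+1)) \twoheadrightarrow \mathbf{GW}^0_n \to 0.
\]

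To splice these pieces together into the target four-term exact sequence, the map $\bm{\pi}^{\mathbb{A}^1}_{n-1}(\op{SO}(2n+1)) \to \bm{\pi}^{\mathbb{A}^1}_{n-1}(\op{V}_{2,2n+1})$ must factor through the quotient $\mathbf{GW}^0_n$. Equivalently, the diagonal composition
\[
a \colon \bm{\pi}^{\mathbb{A}^1}_n(\op{V}_{2,2n+3}) \xrightarrow{\partial} \bm{\pi}^{\mathbb{A}^1}_{n-1}(\op{SO}(2n+1)) \to \bm{\pi}^{\mathbb{A}^1}_{n-1}(\op{V}_{2,2n+1})
\]
appearing in the diagram preceding the statement must vanish. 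Granting $a = 0$, the universal property of the quotient produces the desired dotted arrow $\mathbf{GW}^0_n \to \bm{\pi}^{\mathbb{A}^1}_{n-1}(\op{V}_{2,2n+1})$ with image equal to that of $\bm{\pi}^{\mathbb{A}^1}_{n-1}(\op{SO}(2n+1))$, and exactness of the claimed sequence follows from exactness of the first long exact sequence.

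The vanishing of $a$ is the technical core of the proof and the step I expect to be the main obstacle. My approach is to compare with the composite fiber sequence $\op{V}_{4,2n+3} \to \op{B}_{\op{Nis}}\op{SO}(2n-1) \to \op{B}_{\op{Nis}}\op{SO}(2n+3)$ obtained by iterating stabilization, where $\op{V}_{4,2n+3} := \op{SO}(2n+3)/\op{SO}(2n-1)$ fits into an induced fiber sequence $\op{V}_{2,2n+1} \to \op{V}_{4,2n+3} \to \op{V}_{2,2n+3}$. Naturality of boundary maps identifies $a$ with the $n$-th connecting morphism of this induced sequence, so that $a = 0$ is equivalent to surjectivity of $\bm{\pi}^{\mathbb{A}^1}_n(\op{V}_{4,2n+3}) \to \bm{\pi}^{\mathbb{A}^1}_n(\op{V}_{2,2n+3})$. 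Since both source and target of $a$ are explicit Milnor--Witt K-theory quotients of distinct weight and parity by Proposition~\ref{prop:stiefel1}, I would try to establish this via a combination of a structural $\op{Hom}$-computation in the category of strictly $\mathbb{A}^1$-invariant sheaves and a sectionwise verification using the explicit clutching trivializations of the torsors $\op{SO}(m+1)\to\op{Q}_m$ developed in Section~\ref{sec:clutching}.
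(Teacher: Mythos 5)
Your formal skeleton coincides with the paper's: splicing the two stabilization sequences, truncating via the connectivity of the Stiefel varieties, identifying $\bm{\pi}^{\mathbb{A}^1}_{n-2}(\op{SO}(2n+1))\cong\mathbf{GW}^0_{n-1}$ and $\bm{\pi}^{\mathbb{A}^1}_{n-1}(\op{SO}(2n+3))\cong\mathbf{GW}^0_n$ via Proposition~\ref{prop:stable}, and reducing everything to the vanishing of the diagonal composite $a$. The problem is that the vanishing of $a$ --- which you correctly single out as the technical core --- is never proved in your proposal; it is only reformulated. Your detour through $\op{V}_{4,2n+3}:=\op{SO}(2n+3)/\op{SO}(2n-1)$ and the fiber sequence $\op{V}_{2,2n+1}\to\op{V}_{4,2n+3}\to\op{V}_{2,2n+3}$ is a legitimate naturality translation (granting that this is an $\mathbb{A}^1$-fiber sequence, which itself needs an argument in the style of Lemma~\ref{lem:quadfib}), but surjectivity of $\bm{\pi}^{\mathbb{A}^1}_n(\op{V}_{4,2n+3})\to\bm{\pi}^{\mathbb{A}^1}_n(\op{V}_{2,2n+3})$ is tautologically equivalent to $a=0$ by the long exact sequence, so nothing is gained; and the closing sentence (``I would try to establish this via a Hom-computation and a sectionwise verification'') is a plan, not an argument.

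What is actually needed, and what the paper supplies at exactly this point, is a two-step computation. First, by Proposition~\ref{prop:stiefel1} the source and target of $a$ are $\mathbf{K}^{\op{MW}}_{n+1}/2$ and $\mathbf{I}^n$ for $n$ odd (quotients exchanged for $n$ even), and in strictly $\mathbb{A}^1$-invariant sheaves one has $\op{Hom}(\mathbf{K}^{\op{MW}}_{n+1},\mathbf{I}^n)\cong(\mathbf{I}^n)_{-n-1}(F)\cong\op{W}(F)$, so $a$ is determined by a single element, computable on the universal section, i.e.\ on $[\op{Q}_{2n},\op{V}_{2,2n+3}]_{\mathbb{A}^1}$. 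Second, that element is evaluated geometrically: connectivity forces any map $\op{Q}_{2n}\to\op{V}_{2,2n+3}$ to factor through $\op{Q}_{2n+1}\hookrightarrow\op{V}_{2,2n+3}$, hence the composite to ${\op{B}}_{\op{Nis}}\op{SO}(2n+1)$ factors through ${\op{B}}_{\op{Nis}}\op{SO}(2n)$, whose clutching function $\op{Q}_{2n-1}\to\op{SO}(2n)$ is the explicit one of Section~\ref{sec:clutching}; composing further with $\op{SO}(2n+1)\twoheadrightarrow\op{V}_{2,2n+1}$ factors, again by connectivity, through a self-map of $\op{Q}_{2n-1}$ whose degree is $1+\langle(-1)^n\rangle$ by Proposition~\ref{prop:degree}, i.e.\ $\mathbb{H}$ resp.\ $2$, and this acts as zero on $\mathbf{I}^n$ resp.\ on the mod-$2$ target. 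Without this explicit identification --- in particular without invoking Proposition~\ref{prop:degree} --- the vanishing of $a$, and with it the existence of the dotted arrow $\mathbf{GW}^0_n\to\bm{\pi}^{\mathbb{A}^1}_{n-1}(\op{V}_{2,2n+1})$ and exactness of the claimed four-term sequence, remains unestablished.
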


\begin{proof}
We first consider the case where $n\equiv 1\bmod 2$. By Proposition~\ref{prop:stiefel1}, we can write
\[
a\colon \mathbf{K}^{\op{MW}}_{n+1}/2\cong \bm{\pi}^{\mathbb{A}^1}_{n}(\op{V}_{2,2n+3}) \to 
\bm{\pi}^{\mathbb{A}^1}_{n-1}(\op{V}_{2,2n+1})\cong \mathbf{I}^n.
\]
To identify $a$, we consider the composition $\mathbf{K}^{\op{MW}}_{n+1}\to\mathbf{K}^{\op{MW}}_{n+1}/2\xrightarrow{a} \mathbf{I}^n$ as an element in $\op{Hom}(\mathbf{K}^{\op{MW}}_{n+1},\mathbf{I}^n)$ where the Hom is taken in the category of strictly $\mathbb{A}^1$-invariant sheaves of groups. Then we have $\op{Hom}(\mathbf{K}^{\op{MW}}_{n+1},\mathbf{I}^n)\cong (\mathbf{I}^n)_{-n-1}(F)\cong \op{W}(F)$. In particular, any morphism $\mathbf{K}^{\op{MW}}_{n+1}\to\mathbf{I}^n$ factors as composition of the natural projection $\mathbf{K}^{\op{MW}}_{n+1}\to\mathbf{I}^{n+1}$, the inclusion $\eta\colon \mathbf{I}^{n+1}\subset \mathbf{I}^n$ and multiplication by some element from $\op{W}(F)$. The morphism factors through $\mathbf{K}^{\op{MW}}_{n+1}/2$ if the corresponding element in $\op{W}(F)$ is 2-torsion. 

To determine the element (and consequently the morphism), we consider the $n$-fold contraction of the map. This can be described geometrically, as follows: given a morphism in 
\[
[\op{Q}_{2n}, \op{V}_{2,2n+3}]_{\mathbb{A}^1}\cong \left(\bm{\pi}^{\mathbb{A}^1}_n(\op{V}_{2,2n+3})\right)_{-n}\cong \op{K}^{\op{MW}}_1(F)/2,
\]
we can compose with the classifying morphism $\op{V}_{2,2n+3}\to{\op{B}}_{\op{Nis}}(\op{SO}(2n+1))$; then we compose the corresponding clutching morphism $\op{Q}_{2n-1}\to\op{SO}(2n+1)$ with the natural projection $\op{SO}(2n+1)\to\op{V}_{2,2n+1}$ and the corresponding map in $[\op{Q}_{2n-1},\op{V}_{2,2n+1}]_{\mathbb{A}^1}\cong \left(\bm{\pi}^{\mathbb{A}^1}_{n-1}(\op{V}_{2,2n+1})\right)_n\cong \op{W}(F)$ is the appropriate element. For the actual computation, the morphism $\op{Q}_{2n}\to\op{V}_{2,2n+3}$ factors through the inclusion $\op{Q}_{2n+1}\hookrightarrow\op{V}_{2,2n+3}$ because the composition $\op{Q}_{2n}\to \op{V}_{2,2n+3}\to \op{Q}_{2n+2}$ is null-homotopic for connectivity reasons. In particular, the composition $\op{Q}_{2n}\to \op{V}_{2,2n+3}\to {\op{B}}_{\op{Nis}}\op{SO}(2n+1)$ factors through ${\op{B}}_{\op{Nis}}\op{SO}(2n)$. The corresponding clutching map $\op{Q}_{2n-1}\to \op{SO}(2n)$ was described in Section~\ref{sec:clutching}. The composition 
\[
\op{Q}_{2n-1}\to \op{SO}(2n)\to \op{SO}(2n+1)\twoheadrightarrow \op{V}_{2,2n+1}
\]
again factors through the inclusion $\op{Q}_{2n-1}\to \op{V}_{2,2n+1}$ because connectivity forces the composition with $\op{V}_{2,2n+1}\to \op{Q}_{2n}$ to be null-homotopic. The above composition can therefore be written as $\op{Q}_{2n-1}\to \op{Q}_{2n-1}\hookrightarrow\op{V}_{2,2n+1}$ where the first morphism is an endomorphism of the sphere whose degree is $\mathbb{H}$ by Proposition~\ref{prop:degree}. In particular, the class in $[\op{Q}_{2n-1},\op{V}_{2,2n+1}]_{\mathbb{A}^1}\cong \op{W}(F)$ is trivial, irrespective of the morphism $\op{Q}_{2n}\to \op{V}_{2,2n+3}$ we started with.\footnote{As usual, the philosophy is that the above map is always induced from a morphism defined over $\mathbb{Z}[1/2]$, and there are no 2-torsion elements there.} Consequently, the stabilization sequence reduces to an exact sequence
\[
\mathbf{GW}^0_n\to\mathbf{I}^n\to \bm{\pi}^{\mathbb{A}^1}_{n-2}(\op{SO}(2n-1))\to \mathbf{GW}^0_{n-1}\to 0.
\]

In the case $n\equiv 0\bmod 2$, the relevant morphism in the diagram is 
\[
a\colon \mathbf{I}^{n+1}\cong\bm{\pi}^{\mathbb{A}^1}_n(\op{V}_{2,2n+3})\to \bm{\pi}^{\mathbb{A}^1}_{n-1}(\op{V}_{2,2n+1})\cong \mathbf{K}^{\op{MW}}_n/2.
\]
The same argument as previously, tracing throught the definition of the map using the explicit geometric description of the clutching map, implies that this morphism is zero. Consequently, we get a sequence 
\[
\mathbf{GW}^0_n\to\mathbf{K}^{\op{MW}}_n/2\to \bm{\pi}^{\mathbb{A}^1}_{n-2}(\op{SO}(2n-1))\to \mathbf{GW}^0_{n-1}\to 0.\qedhere
\]
\end{proof}

\begin{remark}
We shortly discuss the structure of the sequences over algebraically closed fields after $n$-fold contraction. 

In the case $n\equiv 1\bmod 2$, the $n$-fold contraction of the first map has the form $\op{GW}^{-n}_0(F)\to \op{W}(F)$, and $\op{GW}^{-n}_0(F)$ is either $0$ (if $i\equiv 3\bmod 4$) or $\mathbb{Z}/2\mathbb{Z}$ (if $i\equiv 1\bmod 4$). In the latter case, we can check in topological realization that the morphism must be trivial. In particular, after $n$-fold contraction, we get a short exact sequence 
\[
0\to\op{W}(F)\to \left(\bm{\pi}^{\mathbb{A}^1}_{n-2}(\op{SO}(2n-1))\right)_{-n}\to \mathbf{GW}^{-n}_{-1}\to 0.
\]

In the case $n\equiv 0\bmod 2$, the $n$-fold contraction of the first map jas the form $\op{GW}^{-n}_0(F)\to \op{GW}^{-n}_0(F)/2$, and we now have $\op{GW}^{-n}_0(F)$ is either $\op{GW}(F)$ (if $n\equiv 0\bmod 4$) or $\mathbb{Z}$ (if $n\equiv 2\bmod 4$). Again, over algebraically closed fields, we can check on topological realization that the morphism is zero; in particular, it couldn't just be the natural reduction morphism. 
\end{remark}

\begin{remark}
In the complex realization, we have the sequence 
\[
\pi_{2n-1}(\op{V}_{2,2n+1})\to \pi_{2n-2}(\op{SO}(2n-1))\to \pi_{2n-2}(\op{SO}(2n+1))\to 0.
\]
The last group is stable and the first group is $\mathbb{Z}/2\mathbb{Z}$. By the tables in \cite{kervaire}, we see that the middle group is a split extension of these two, in particular, the above sequence is also exact at the left. 

The same picture appears in the real realization: we have the sequence 
\[
\pi_{n-1}(\op{V}_{2,n+1})\to \pi_{n-2}(\op{SO}(n-1))\to \pi_{n-2}(\op{SO}(n+1))\to 0.
\]
The last group is again stable, and the first group alternates between $\mathbb{Z}/2\mathbb{Z}$ and $\mathbb{Z}$. The tables in \cite{kervaire} imply as before that the middle group is a split extension of the two outer ones. In particular, the first non-stable homotopy group of the orthogonal group is a product 
\[
\pi_{n-1}(\op{SO}(n))\cong \pi_{n-1}(\op{SO}(\infty))\times \pi_n(\op{V}_{2,n+2})
\]
for $n\geq 8$. 

One might therefore suspect that in the sequences of Proposition~\ref{prop:unstab1}, the respective morphisms 
\[
\mathbf{GW}^0_n\to\bm{\pi}^{\mathbb{A}^1}_{n-1}(\op{V}_{2,2n+1})
\]
are trivial, describing the first unstable $\mathbb{A}^1$-homotopy sheaf of the orthogonal groups $\op{SO}(2n-1)$ as an extension. Probably splitness (as in the topological case) of the sequences in the $\mathbb{A}^1$-homotopy setting would be too much to ask, but the comparison to topology would suggest that at least the corresponding cohomology operations 
\[
\op{H}^{n-1}_{\op{Nis}}(X,\mathbf{GW}^0_{n-1})\to \op{H}^{n}_{\op{Nis}}(X,\bm{\pi}^{\mathbb{A}^1}_{n-1}(\op{V}_{2,2n+1}))
\]
induced as boundary maps for the extensions are trivial. 
\end{remark}

\end{document}